\theoremstyle{plain}
\newtheorem{introtheorem}{Theorem}
\newtheorem{theorem}{Theorem}[section]
\newtheorem{proposition}[theorem]{Proposition}
\newtheorem{lemma}[theorem]{Lemma}
\newtheorem{corollary}[theorem]{Corollary}
\newtheorem{conjecture}[theorem]{Conjecture}
\newtheorem{question}[theorem]{Question}
\newtheorem*{proposition*}{Proposition}
\newtheorem{introcorollary}{Corollary}
\theoremstyle{definition}
\newtheorem{example}[theorem]{Example}
\newtheorem{examples}[theorem]{Examples}
\theoremstyle{remark}
\newtheorem{remark}[theorem]{Remark}
\newcommand{\secref}[1]{Section~\ref{#1}}
\newcommand{\thmref}[1]{Theorem~\ref{#1}}
\newcommand{\propref}[1]{Proposition~\ref{#1}}
\newcommand{\lemref}[1]{Lemma~\ref{#1}}
\newcommand{\corref}[1]{Corollary~\ref{#1}}
\newcommand{\exref}[1]{Example~\ref{#1}}
\newcommand{\queref}[1]{Question~\ref{#1}}
\def\max{\mathrm{max}}
\def\map{\mathrm{map}}
\def\Hom{\mathrm{Hom}}
\def\cat{\mathrm{cat}}
\def\secat{\mathrm{secat}}
\def\dim{\mathrm{dim}}
\def\im{\mathrm{im}}
\def\rk{\mathrm{rank}}
\def\eQ{\mathrm{e}_0}
\newcommand{\be}{\begin{enumerate}}
\newcommand{\ee}{\end{enumerate}}
\newcommand{\Z}{\mathbb{Z}}
\newcommand{\C}{\mathbb{C}}
\newcommand{\Q}{\mathbb{Q}}
\newcommand{\TC}{{\sf TC}}
\newcommand{\MTC}{{\sf MTC}}
\begin{document}

\title[A Mapping Theorem for Topological Complexity]{A Mapping Theorem for Topological Complexity}

\author{Mark Grant}
\author{Gregory Lupton}
\author{John Oprea}

\address{School of Mathematics \& Statistics, Herschel Building, Newcastle University, Newcastle upon Tyne NE1 7RU, U.K.}

\email{mark.grant@newcastle.ac.uk}

\address{Department of Mathematics, Cleveland State University, Cleveland OH 44115 U.S.A.}

\email{g.lupton@csuohio.edu}
\email{j.oprea@csuohio.edu}

\date{\today}

\keywords{Lusternik-Schnirelmann category, topological complexity, topological robotics, sectioned fibration, connective cover, Avramov-F{\'e}lix conjecture}
\subjclass[2010]{55M99, 57S10 (Primary); 55M30, 55R91 (Secondary)  55S40}

\begin{abstract} We give new lower bounds for the (higher) topological complexity of a space, in terms of the Lusternik-Schnirelmann category of a certain auxiliary space.   We also give new lower bounds for the rational topological complexity of a space, and more generally for the rational sectional category of a map, in terms of the rational category of a certain auxiliary space.  We use our results to deduce consequences for the global (rational) homotopy structure of simply connected, hyperbolic finite complexes.  
\end{abstract}

\thanks{This work was partially supported by grants from the Simons Foundation: (\#209575 to Gregory Lupton
and \#244393 to John Oprea).}

\maketitle
\section{Introduction}\label{sec:intro}

The \emph{topological complexity} of a space is a numerical homotopy invariant, of Lusternik-Schnirelmann type, introduced by Farber \cite{Far03} and motivated by the motion planning problem from the field of topological robotics.  
In this paper, we prove several results that establish new lower bounds for (higher) topological complexity in both the rational and the integral (ordinary) settings.
 We begin with an outline of our main results.  
 
Here,   $\cat(X)$ denotes the \emph{Lusternik-Schnirelmann category} of a space $X$, and
$\secat(p)$ denotes  the \emph{sectional category} of a fibration $p \colon E \to B$.  We refer to \cite{CLOT03} for a general introduction to L-S category and related topics, including sectional category.   Also, $\TC(X)$ denotes the topological complexity of  $X$ and, for $n \geq 3$,  $\TC_n(X)$ denotes the \emph{higher topological complexity} of $X$.
The original articles that introduce these notions \cite{Far03, Rud10}  discuss the connection to the motion planning problem.  We adopt the notational convention that $\TC(X) = \TC_2(X)$. 

Our first main result is as follows (\thmref{thm: cat Y_1 Y_2}).

\begin{introtheorem}\label{introthm: cat Y_1 Y_2}
Suppose given two maps $f_j \colon Y_j \to X$ of connected spaces, $j = 1,2$, with each $(f_j)_\# \colon \pi_i(Y_j) \to \pi_i(X)$ an inclusion  and such that the image subgroups $(f_j)_\# \big(\pi_i(Y_j)\big)$ are complementary subgroups of $\pi_i(X)$, $\forall i \geq 1$ (see \secref{sec: TC integral} for details).  Then for $n \geq 2$ we have 
$$ \cat(Y_1 \times Y_2 \times X^{n-2})  \leq \TC_n(X).$$
\end{introtheorem}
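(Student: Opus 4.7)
The plan is to pull back the fibration defining higher topological complexity along a natural map $g \co Z \to X^n$, and use the complementarity hypothesis to show that the $g$-preimage of any set carrying a local section is categorical in $Z$. Recall that $\TC_n(X) = \secat(e_n)$ for the evaluation fibration $e_n \co X^{J_n} \to X^n$, where $J_n$ is the wedge of $n$ intervals at a common basepoint. Set $Z := Y_1 \times Y_2 \times X^{n-2}$ and take
$$g := f_1 \times f_2 \times \mathrm{id}_{X^{n-2}} \co Z \longrightarrow X^n.$$
Since $X^{J_n} \simeq X$, a section of $e_n$ over an open set $U \subseteq X^n$ is equivalent to a map $\sigma \co U \to X$ together with a homotopy from $U \hookrightarrow X^n$ to $\Delta_n \circ \sigma$. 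Pulling back to $V := g^{-1}(U)$ produces $\tilde\sigma := \sigma \circ g|_V \co V \to X$ and coordinate-wise homotopies
$$f_1 \pi_{Y_1}|_V \simeq \tilde\sigma \simeq f_2 \pi_{Y_2}|_V, \qquad \pi_{X_j}|_V \simeq \tilde\sigma \quad (3 \leq j \leq n),$$
where $\pi_{Y_j}$ and $\pi_{X_j}$ denote the coordinate projections from $Z$.

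The main algebraic input is that the homotopy pullback $W$ of the diagram $Y_1 \xrightarrow{f_1} X \xleftarrow{f_2} Y_2$ is weakly contractible under the complementarity hypothesis. This follows from the long exact sequence of the fibration $\Omega X \to W \to Y_1 \times Y_2$,
$$\cdots \to \pi_{i+1}(X) \to \pi_i(W) \to \pi_i(Y_1) \oplus \pi_i(Y_2) \xrightarrow{(f_1)_\# - (f_2)_\#} \pi_i(X) \to \cdots,$$
since $\pi_*$-injectivity plus complementary images makes the rightmost arrow an isomorphism in each degree, so $\pi_i(W) = 0$ for all $i \geq 1$. (A small amount of care is required for $\pi_1$ in the non-abelian case; one expects the convention for ``complementary'' fixed in \secref{sec: TC integral} to render the analogous pointed-set sequence exact.)

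With $W \simeq \ast$ in hand, the homotopy $f_1 \pi_{Y_1}|_V \simeq f_2 \pi_{Y_2}|_V$ furnishes a map $V \to W$, which is therefore null-homotopic. Projecting gives null-homotopies of $\pi_{Y_1}|_V$ and $\pi_{Y_2}|_V$; whence $\tilde\sigma \simeq f_1 \pi_{Y_1}|_V$ is null-homotopic in $X$, and so is each $\pi_{X_j}|_V \simeq \tilde\sigma$. As a map into a product is null-homotopic if and only if each of its coordinate components is, the inclusion $V \hookrightarrow Z$ is null-homotopic, so $V$ is categorical in $Z$. A cover of $X^n$ by $\TC_n(X) + 1$ open sets carrying local sections of $e_n$ thus pulls back to a categorical open cover of $Z$ of the same cardinality, yielding $\cat(Z) \leq \TC_n(X)$. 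The main obstacle I anticipate is the clean translation between data for a section of the pullback fibration $g^\ast e_n$ and the coordinate-wise homotopies used above, together with pinning down the notion of ``complementary'' in the $\pi_1$-non-abelian setting so that $W$ is indeed contractible.
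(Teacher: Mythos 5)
Your proposal is correct and follows essentially the same route as the paper: both pull back the fibration defining $\TC_n(X)$ along $f_1\times f_2\times 1_{X^{n-2}}$ and both rest on the same key computation, namely that injectivity plus complementarity of the images forces the connecting map in the long exact sequence of the relevant homotopy pullback to be bijective, making that pullback (your $W$, the paper's $N(f,P_n)$) weakly contractible. The only difference is packaging: the paper proves contractibility of the full $n$-fold pullback and then quotes the standard facts $\secat(\overline{P_n})\leq\secat(P_n)$ and $\secat(\overline{P_n})=\cat(Y)$ for a fibration with contractible total space, whereas you unwind those facts into an explicit open-cover argument using only the contractibility of the binary pullback $M(f_1,f_2)$ --- and the $\pi_1$ care you flag is handled in the paper exactly by the ``span plus trivial intersection'' definition of complementary subgroups and the explicit formula $(b_1,b_2)\mapsto (f_{1\#}b_1)^{-1}f_{2\#}(b_2)$ for the connecting map on fundamental groups.
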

This leads immediately to the following inequality (included in \corref{cor: cat F B}). 
\begin{introcorollary}\label{introcor: cat F B}
Suppose given a fibration sequence $F \to E \to B$ of connected CW complexes.  If the fibration admits a (homotopy) section, then we have
$$\cat(F \times B) \leq \TC(E).$$
\end{introcorollary}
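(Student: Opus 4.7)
The plan is to derive the corollary as a direct instance of \thmref{introthm: cat Y_1 Y_2} with $n = 2$, taking $X = E$, $Y_1 = F$ with $f_1$ the fiber inclusion, and $Y_2 = B$ with $f_2 = s$ the given homotopy section. With these choices the theorem's conclusion reads $\cat(F \times B \times E^0) = \cat(F \times B) \leq \TC_2(E) = \TC(E)$, which is exactly what is claimed. Hence the entire argument reduces to verifying the two hypotheses of that theorem for this pair of maps: injectivity of each $(f_j)_\#$ on $\pi_i$, and complementarity of the two image subgroups inside $\pi_i(E)$.

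For injectivity, the section satisfies $p \circ s \simeq \mathrm{id}_B$, so $(f_2)_\# = s_\#$ is split injective on every homotopy group. For the fiber inclusion $f_1$, the existence of the section forces the connecting homomorphism $\partial \colon \pi_{i+1}(B) \to \pi_i(F)$ of the long exact sequence to vanish, because $p_\# \circ s_\# = \mathrm{id}$ and $\partial \circ p_\# = 0$. Consequently the long exact sequence breaks into split short exact sequences
$$1 \longrightarrow \pi_i(F) \xrightarrow{(f_1)_\#} \pi_i(E) \xrightarrow{p_\#} \pi_i(B) \longrightarrow 1$$
for every $i \geq 1$, split by $(f_2)_\#$; in particular $(f_1)_\#$ is injective.

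These same split short exact sequences also supply the complementarity statement: every element of $\pi_i(E)$ decomposes uniquely as a product of an element of $(f_1)_\#(\pi_i(F))$ with one of $(f_2)_\#(\pi_i(B))$. For $i \geq 2$ this is a direct sum of abelian groups, while at $i = 1$ it is a semidirect product $\pi_1(F) \rtimes \pi_1(B)$. The delicate point, and the main obstacle I would have to resolve by cross-checking the definition in \secref{sec: TC integral}, is ensuring that this non-abelian decomposition at $\pi_1$ conforms to the paper's precise notion of ``complementary subgroups''; I expect it to, since a semidirect decomposition is the natural non-abelian analogue of a direct sum. Granting this, \thmref{introthm: cat Y_1 Y_2} applies directly and the corollary follows with no further computation.
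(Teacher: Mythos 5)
Your proposal is correct and is exactly the paper's argument: the corollary is deduced by applying Theorem~\ref{introthm: cat Y_1 Y_2} with $Y_1=F$, $Y_2=B$, $X=E$, the fibre inclusion and the section, and the hypotheses are verified via the split long exact sequence just as you describe. The point you flagged is fine: the paper defines \emph{complementary subgroups} of a (possibly non-abelian) group simply as a pair that spans and intersects trivially, which is precisely what the semidirect decomposition $\pi_1(E)\cong\pi_1(F)\rtimes\pi_1(B)$ provides.
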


As a consequence, we have the following observation. Suppose that  $X$ dominates $Y$, that is, we have a map $\sigma\colon Y \to X$ that admits a left homotopy inverse $r\colon X \to Y$.  Then it is well-known that $\TC(Y) \leq \TC(X)$ and, since $\cat(Y) \leq \TC(Y)$ in general (see below), we obtain standard lower bounds  $\cat(Y) \leq \TC(Y) \leq \TC(X)$ for $\TC(X)$.  Now if we let $F$ be the homotopy fibre of  $r\colon X \to Y$, then the fibre sequence $F \to X \to Y$ has $\sigma\colon Y \to X$ as a section, and \corref{introcor: cat F B} gives a different lower bound $\cat(F \times Y) \leq \TC(X)$ which, in some cases, may improve upon the standard one.  

We give a number of other applications and consequences of our theorem.  For instance, our theorem leads directly to the following inequality due to Dranishnikov \cite[Th.3.6]{Dra}.

\begin{introcorollary}\label{introcor: wedge}
Suppose $X$ and $Y$ have the homotopy type of connected CW complexes.  Then
$\cat(X \times Y) \leq \TC(X \vee Y)$. 
\end{introcorollary}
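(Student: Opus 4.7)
The plan is to obtain \corref{introcor: wedge} directly from \thmref{thm: cat Y_1 Y_2} by specialising the latter to the case $n=2$ with the ambient space taken to be $X \vee Y$. Concretely, set $Y_1 := X$ and $Y_2 := Y$, and take $f_1 \colon X \to X \vee Y$ and $f_2 \colon Y \to X \vee Y$ to be the canonical wedge inclusions. Once the hypotheses of the theorem are verified for this data, its conclusion (with $n=2$) reads
$$\cat(X \times Y) \;=\; \cat\bigl(Y_1 \times Y_2 \times (X \vee Y)^{0}\bigr) \;\leq\; \TC_2(X \vee Y) \;=\; \TC(X \vee Y),$$
which is exactly the desired inequality. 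So the work lies entirely in checking the two hypotheses on $f_1$ and $f_2$.

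The injectivity of each $(f_j)_\#$ on every homotopy group is immediate: each wedge inclusion admits a retraction (collapse the other summand), hence is split-injective on $\pi_i$ for all $i$. For the complementarity of the image subgroups in $\pi_i(X \vee Y)$, I would appeal to the standard structural decompositions of the homotopy groups of a wedge. At $i = 1$ the van Kampen theorem gives $\pi_1(X \vee Y) \cong \pi_1(X) \ast \pi_1(Y)$, and the images of $(f_1)_\#$ and $(f_2)_\#$ are the two free-product factors, which intersect trivially. For $i \geq 2$ the Hilton--Milnor splitting exhibits $(f_1)_\#\bigl(\pi_i(X)\bigr) \oplus (f_2)_\#\bigl(\pi_i(Y)\bigr)$ as a direct summand of $\pi_i(X \vee Y)$, the remaining summand being generated by iterated Whitehead products of elements from $\pi_*(X)$ and $\pi_*(Y)$.

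The one genuine subtlety is matching these decompositions against the precise notion of \emph{complementary subgroups} introduced in \secref{sec: TC integral}; the $\pi_1$ case is the delicate one, since there one has a free-product decomposition rather than a direct sum. I would expect that definition to be framed precisely so as to accommodate the wedge configuration as a paradigm example, in which case the two observations above close the argument with no further computation. Granting that, the corollary follows at once by substitution into \thmref{thm: cat Y_1 Y_2}.
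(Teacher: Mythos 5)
There is a genuine gap: the wedge inclusions do \emph{not} satisfy hypothesis (ii) of \thmref{thm: cat Y_1 Y_2}. The definition of complementary subgroups in \secref{sec: TC integral} requires two things: that the subgroups intersect trivially \emph{and} that they span $G$, meaning every element of $G$ is a product $ab$ with $a\in A$, $b\in B$. Your own description of the homotopy groups of a wedge shows the spanning condition fails. On $\pi_1$, the free product $\pi_1(X)\ast\pi_1(Y)$ is not spanned by its two factors when both are nontrivial: an alternating word such as $a_1b_1a_2$ with $a_1,a_2\in\pi_1(X)\setminus\{e\}$ and $b_1\in\pi_1(Y)\setminus\{e\}$ cannot be written as a single product $ab$. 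On higher homotopy groups the Hilton--Milnor summands generated by Whitehead products lie outside the image of $(f_1)_\#(\pi_i(X))\oplus(f_2)_\#(\pi_i(Y))$; already for $X=Y=S^2$ the class $[\iota_1,\iota_2]\in\pi_3(S^2\vee S^2)$ is missed. The spanning condition is not cosmetic: in the proof of \thmref{thm: cat Y_1 Y_2} it is exactly what makes $\delta_\#$ surjective, hence bijective, hence the pullback $N(f,P_n)$ contractible. With the wedge inclusions, $N(f,P_2)$ is not contractible and the argument collapses. (This is precisely the point of the weaker hypotheses in the rational version, \thmref{thm: rational cat Y_1 Y_2}, where the images need only intersect trivially and a direct application to $i_1$, $i_2$ \emph{is} legitimate; integrally it is not.)

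The paper's route around this is \corref{cor: cat F B}: apply the theorem not to $i_2\colon Y\to X\vee Y$ but to the fibre inclusion $j\colon F\to X\vee Y$ of the sectioned fibration $F\to X\vee Y\xrightarrow{p_1} X$. The homotopy fibre $F$ is large enough to absorb all the missing homotopy (the Whitehead-product summands and the long alternating words), so $j_\#(\pi_i(F))$ and $(i_1)_\#(\pi_i(X))$ really are complementary, giving $\cat(F\times X)\leq\TC(X\vee Y)$. Since $i_2$ lifts through $j$ and admits the retraction $p_2\circ j$, the space $Y$ is a retract of $F$, whence $Y\times X$ is a retract of $F\times X$ and $\cat(X\times Y)\leq\cat(F\times X)$. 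If you want to salvage your approach, this replacement of $Y$ by the homotopy fibre, followed by the retraction argument, is the missing step.
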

\noindent{}Furthermore, in our actual result (\corref{cor: TC X v Y}) we extend this inequality to one involving $\TC_n(X \vee Y)$.

We also show some results in the rational homotopy setting.  The basic  result here is of a similar nature to our integral result, but allows for considerably greater flexibility in its application.  For our rational results, we assume spaces are simply connected.  We refer to \cite{HMR75} for general results about rationalization of simply connected spaces.  We write $X_\Q$ for the rationalization of a simply connected space $X$, and 
$g_\Q\colon X_\Q \to Y_\Q$ for the rationalization of a map $g\colon X \to Y$ of simply connected spaces.  Then $\pi_i(X_\Q) \cong \pi_i(X)\otimes \Q$ are the rational homotopy groups of $X$ and $g_{\Q\#}= g_\#\otimes 1 \colon \pi_*(X_\Q) \to \pi_*(Y_\Q)$ is the homomorphism induced on rational homotopy groups.  In \thmref{thm: rational cat Y_1 Y_2}, we show the following result.  Recall that in several of our results, including this one, we use $\TC_2(X)$ to denote $\TC(X)$.  

\begin{introtheorem}\label{introthm: rational cat Y_1 Y_2}
Suppose given two maps $f_{j} \colon Y_{j} \to X$ of simply connected spaces, $j = 1,2$, such that each $(f_{j\Q})_\# \colon \pi_*(Y_{j\Q}) \to \pi_*(X_\Q)$ is an inclusion on all rational homotopy groups, and the image subgroups satisfy
$$(f_{1\Q})_\# \big(\pi_i(Y_{1\Q})\big) \cap (f_{2\Q})_\# \big(\pi_i(Y_{2\Q})\big) = 0 \subseteq \pi_i(X_\Q),$$
for each $i \geq 2$.  Then for $n \geq 2$ we have
$$ \cat(Y_{1\Q}) +  \cat(Y_{2\Q}) + (n-2)\,\cat(X_\Q)  \leq \TC_n(X_\Q).$$
\end{introtheorem}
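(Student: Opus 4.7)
The plan is to adapt the argument for Theorem~\ref{introthm: cat Y_1 Y_2} to the rational setting using Sullivan minimal models. A key ingredient is the F{\'e}lix--Halperin product formula $\cat(A\times B)=\cat(A)+\cat(B)$ for simply connected rational spaces of finite type, which converts the ``product" upper bound of Theorem~\ref{introthm: cat Y_1 Y_2} into the sum appearing on the left-hand side of the claimed inequality.

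I would first pass to minimal Sullivan models $(\Lambda V,d)$ for $X_\Q$ and $(\Lambda V_j,d_j)$ for $Y_{j\Q}$, with CDGA morphisms $\phi_j\colon \Lambda V\to \Lambda V_j$ realizing $f_{j\Q}$. Since $(f_{j\Q})_{\#}$ is injective on rational homotopy, each $\phi_j$ is surjective on indecomposables, giving $V\twoheadrightarrow V_j$; denote the kernels by $K_j$. The trivial-intersection hypothesis translates to $K_1+K_2=V$, which is strictly weaker than the integral condition (which would also require $K_1\cap K_2=0$, i.e.\ a genuine direct sum decomposition).

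The heart of the argument is the CDGA map
$$
\Phi \;=\; \phi_1\otimes \phi_2 \otimes \mathrm{id}^{\otimes(n-2)}\colon (\Lambda V)^{\otimes n}\longrightarrow \Lambda V_1\otimes \Lambda V_2 \otimes (\Lambda V)^{\otimes(n-2)},
$$
whose target is a model for $Y_{1\Q}\times Y_{2\Q}\times X_\Q^{n-2}$. Using the identification of $\cat$ with the Toomer invariant $e_0$ for simply connected rational spaces of finite type, together with the tensor-additivity of $e_0$, one picks cocycles of top Toomer weight on each of the $n$ factors of the target, with weights $\cat(Y_{1\Q})$, $\cat(Y_{2\Q})$, and $\cat(X_\Q)$ (the latter in each of the $n-2$ identity factors). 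Their external product has total Toomer weight $\cat(Y_{1\Q})+\cat(Y_{2\Q})+(n-2)\cat(X_\Q)$. Lifting this class through $\Phi$ to a nonzero element of equal weight in the kernel ideal of the multiplication $\mu_n\colon (\Lambda V)^{\otimes n}\to \Lambda V$, which is the model-level avatar of the fibration computing $\TC_n(X_\Q)$, produces the desired lower bound.

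The main obstacle is producing this weight-preserving lift when $K_1\cap K_2$ is nontrivial; in that case one cannot simply reduce to the integral Theorem~\ref{introthm: cat Y_1 Y_2} after rationalizing, because the pullback total space used there is rationally contractible only when the images of $\pi_*$ span. To overcome this, I would use the decomposition $V=K_1+K_2$ to select a basis of $V$ compatible with both $\phi_1$ and $\phi_2$, and then track the nonvanishing of the lifted class in the kernel ideal of $\mu_n$ via the Toomer weight formalism; the F{\'e}lix--Halperin additivity then packages the individual weights into the claimed sum bound on $\TC_n(X_\Q)$.
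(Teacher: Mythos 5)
Your proposal diverges from the paper's argument and, as written, has two genuine gaps. First, the step identifying $\cat$ with the Toomer invariant $\eQ$ is false in general: for a simply connected finite-type space $Z$ one only has $\eQ(Z)\leq \cat(Z_\Q)$, with equality guaranteed for Poincar\'e duality spaces (F\'elix--Halperin--Lemaire) but not otherwise. So the ``cocycles of top Toomer weight'' with weights equal to $\cat(Y_{1\Q})$, $\cat(Y_{2\Q})$, $\cat(X_\Q)$ need not exist, and even if the rest of the argument went through it would only bound $\TC_n(X_\Q)$ below by a sum of Toomer invariants, which is weaker than the claimed inequality. Second, the crucial ``weight-preserving lift through $\Phi$ into the kernel ideal of $\mu_n$'' is asserted but not constructed. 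Although each $\phi_j$ is surjective as a CDGA map, $H(\Phi)$ need not be surjective, and no mechanism is given for why a lift (if one exists) lies in $\ker H(\mu_n)$ with undiminished weight; this is precisely where the hypothesis $\im(f_{1\Q})_\#\cap\im(f_{2\Q})_\#=0$ (your $K_1+K_2=V$) has to do its work, and the proposal does not show how. Moreover, one would still need a theorem converting such weights into a lower bound for $\secat(\mu_n)$; the known algebraic invariants of this type (e.g.\ the $\MTC$ of Fern\'andez Su\'arez--Ghienne--Kahl--Vandembroucq) can give strictly weaker bounds than the theorem asserts, which is exactly the point of \exref{ex:MTC example}.

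The paper's proof avoids minimal models entirely and is much closer to the proof of the integral \thmref{introthm: cat Y_1 Y_2}: pull back $P_{n\Q}\colon PX_\Q\to X_\Q^n$ along $f_\Q=f_{1\Q}\times f_{2\Q}\times 1$ to get $\overline{P_{n\Q}}\colon N(f_\Q,P_{n\Q})\to Y_\Q$. The injectivity and trivial-intersection hypotheses make the connecting map $\delta_\#=\partial_\#\circ f_{\Q\#}$ injective on rational homotopy (here the images need not span, so $N(f_\Q,P_{n\Q})$ need not be contractible---this is the obstacle you correctly identified). Instead, injectivity of $\delta_\#$ forces the fibre inclusion $j_\Q\colon \Omega X_\Q^{n-1}\to N(f_\Q,P_{n\Q})$ to be surjective on rational homotopy, and since its domain is an H-space this surjection admits a section (the key step borrowed from the proof of the F\'elix--Halperin mapping theorem). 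Hence $\overline{P_{n\Q}}$ is nullhomotopic, so $\cat(Y_\Q)=\secat(\overline{P_{n\Q}})\leq\secat(P_{n\Q})=\TC_n(X_\Q)$, and the rational product formula for category converts $\cat(Y_\Q)$ into the stated sum. If you want to salvage your approach, you would need to replace $\eQ$ by an invariant actually equal to rational category and supply the missing lifting argument; the H-space section argument is the cleaner route.
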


The proof we give for \thmref{introthm: rational cat Y_1 Y_2} leads to 
the following consequence (\corref{cor: mapping th}).

\begin{introcorollary}\label{introcor: secat mapping}
Let $g \colon X \to Z$ and $f \colon Y \to Z$ be maps of simply connected spaces.  If
\begin{itemize}\item[(I)] $g_{\Q\#} \colon \pi_*(X_\Q) \to \pi_*(Z_\Q)$ and $f_{\Q\#} \colon \pi_*(Y_\Q) \to \pi_*(Z_\Q)$  are both injective; and
\item[(II)] $\im (g_{\Q\#}) \cap \im(f_{\Q\#}) = 0 \subseteq \pi_*(Z_\Q)$,
\end{itemize}
then we have
$\cat(Y_\Q)  \leq \secat( g_\Q) $.
\end{introcorollary}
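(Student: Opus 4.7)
This corollary is the sectional-category formulation sitting behind \thmref{introthm: rational cat Y_1 Y_2}: the $n=2$ case of that theorem is recovered by setting $Z = X \times X$, $g = \Delta_X$, $Y = Y_1 \times Y_2$, and $f = f_1 \times f_2$, using $\TC(X_\Q) = \secat(\Delta_{X_\Q})$ (Farber's identification) together with the rational product formula $\cat((Y_1 \times Y_2)_\Q) = \cat(Y_{1\Q}) + \cat(Y_{2\Q})$ for LS\nobreakdash-category. My plan is therefore to give a direct covering argument for the more general sectional-category inequality and then specialise.

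Working throughout in the rational category, set $k = \secat(g_\Q)$. By \v{S}varc's characterisation I would fix a fibration replacement of $g_\Q$ together with an open cover $\{U_0, \ldots, U_k\}$ of $Z_\Q$ admitting local sections $s_i\colon U_i \to X_\Q$. Pulling back along $f_\Q$ produces an open cover $V_i = f_\Q^{-1}(U_i)$ of $Y_\Q$ of cardinality $k+1$, so it suffices to prove each $V_i$ is categorical in $Y_\Q$. For each $i$, the pair $(V_i \hookrightarrow Y_\Q,\ s_i \circ f_\Q|_{V_i}\colon V_i \to X_\Q)$, together with the commuting homotopy $f_\Q|_{V_i} \simeq g_\Q \circ s_i \circ f_\Q|_{V_i}$, assembles into a map $\beta_i\colon V_i \to W$ into the homotopy pullback $W = Y_\Q \times^h_{Z_\Q} X_\Q$, and the inclusion factors as $\pi_Y \circ \beta_i$. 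Thus I reduce to showing the projection $\pi_Y\colon W \to Y_\Q$ is rationally null-homotopic.

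A Mayer--Vietoris calculation shows that, under hypotheses (I) and (II), the map $f_{\Q\#} - g_{\Q\#}\colon \pi_n(Y_\Q) \oplus \pi_n(X_\Q) \to \pi_n(Z_\Q)$ is injective in every degree: indeed, if $f_{\Q\#}(y) = g_{\Q\#}(x)$, then this common element lies in $\im(f_{\Q\#}) \cap \im(g_{\Q\#}) = 0$ by (II), and then (I) forces $y = 0$ and $x = 0$. By the exact sequence of the pullback, $\pi_Y\colon W \to Y_\Q$ is therefore zero on all rational homotopy groups.

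The main obstacle will be to upgrade this vanishing to a genuine null-homotopy: for maps between simply connected rational spaces, being zero on $\pi_*$ is strictly weaker than being null-homotopic, as maps built from Whitehead-product phenomena show. The remedy I would use is to pass to minimal Sullivan models $\varphi\colon (\Lambda V_Z, d) \to (\Lambda V_X, d)$ and $\psi\colon (\Lambda V_Z, d) \to (\Lambda V_Y, d)$ for $g_\Q$ and $f_\Q$; in this algebraic setting (I) translates to surjectivity of $\varphi$ and $\psi$ on indecomposables, and (II) to $\ker\varphi_* + \ker\psi_* = V_Z$. These combined surjectivity and complementarity conditions are strong enough to transport, via $\psi$, any $\Lambda V_Z$-module retraction witnessing $\secat(g_\Q) \leq k$ into a retraction of the minimal model of $Y_\Q$ witnessing $\cat(Y_\Q) \leq k$; equivalently, they provide an explicit DGA-null-homotopy for the Sullivan model of $\pi_Y$, which yields the required topological null-homotopy and hence the inequality $\cat(Y_\Q) \leq \secat(g_\Q)$.
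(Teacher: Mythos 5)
Your reduction is sound and structurally matches the paper's own argument: both come down to showing that the projection $\pi_Y\colon W \to Y_\Q$ from the homotopy pullback of $f_\Q$ and $g_\Q$ is null-homotopic, and your verification that hypotheses (I) and (II) force $(\pi_Y)_\# = 0$ on rational homotopy groups is correct. You also correctly identify the real difficulty, namely that $\pi_*$-triviality of a map does not by itself yield a null-homotopy. The gap is in your resolution of that difficulty. The assertion that surjectivity of $\varphi$ and $\psi$ on indecomposables together with $\ker Q\varphi + \ker Q\psi = V_Z$ lets you ``transport a $\Lambda V_Z$-module retraction witnessing $\secat(g_\Q)\leq k$ into a retraction witnessing $\cat(Y_\Q)\leq k$'' is a restatement of the desired inequality in model-theoretic language, not an argument: no such transport is constructed, and there is no off-the-shelf characterization of rational sectional category by module retractions that would make it automatic (the module-theoretic invariants of F{\'e}rnandez Su{\'a}rez--Ghienne--Kahl--Vandembroucq give \emph{lower} bounds for $\secat$, not equalities). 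Likewise, the promised ``explicit DGA null-homotopy'' for the model of $\pi_Y$ is never exhibited.

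The missing idea --- and the way the paper closes the argument --- is to factor $\pi_Y$ through the map $q\colon W \to Y_\Q \times X_\Q$ that forgets the path. This $q$ is precisely the pullback of the path fibration $P_2\colon PZ_\Q \to Z_\Q \times Z_\Q$ along $f_\Q \times g_\Q$, hence a fibration with fibre $\Omega Z_\Q$. Hypotheses (I) and (II) make the connecting map $\Omega(Y_\Q \times X_\Q) \to \Omega Z_\Q$, which on homotopy groups sends $(b,a)$ to $-f_{\Q\#}(b) + g_{\Q\#}(a)$, injective on rational homotopy, so by exactness the fibre inclusion $j\colon \Omega Z_\Q \to W$ is surjective on rational homotopy. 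Since $\Omega Z_\Q$ is a rational H-space, a map out of it that is onto rational homotopy groups admits a section $\sigma$ (this is exactly the device used in the proof of the F{\'e}lix--Halperin mapping theorem), whence $q \simeq q \circ j \circ \sigma = * \circ \sigma = *$, and therefore $\pi_Y$ is null-homotopic. Replacing your final paragraph with this H-space section argument completes the proof; without it, the last step is unsupported.
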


\corref{introcor: secat mapping} is worth highlighting, as it is a direct generalization of the so-called \emph{mapping theorem} of F{\'e}lix-Halperin (see \cite[Th.28.6]{F-H-T}), a result of fundamental importance in rational homotopy.  Since we often refer to this result, we will give its statement here, for convenience.

\begin{introtheorem}[{Mapping Theorem [F{\'e}lix-Halperin]}]\label{introthm: mapping th}
Let $f \colon Y \to Z$ be a map of simply connected spaces.  If $f_{\Q\#} \colon \pi_*(Y_\Q) \to \pi_*(Z_\Q)$ is injective, then $\cat(Y_\Q)  \leq \cat(Z_\Q)$. 
\end{introtheorem}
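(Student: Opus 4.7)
The plan is to work through Sullivan minimal models, the natural framework for rational Lusternik-Schnirelmann category. First, let $(\Lambda V, d_Y)$ and $(\Lambda W, d_Z)$ be minimal Sullivan models of $Y$ and $Z$, and let $\phi \colon (\Lambda W, d_Z) \to (\Lambda V, d_Y)$ be a Sullivan representative of $f$. Via the natural isomorphism $\pi_n(X_\Q) \cong \Hom_\Q(U^n, \Q)$ for a simply connected $X$ with minimal model $(\Lambda U, d)$, the hypothesis that $f_{\Q\#}$ is injective is equivalent to the surjectivity of the induced map $\bar\phi \colon W \to V$ on indecomposables. A standard Sullivan-theoretic argument would then let me replace $\phi$ with a morphism that is surjective as a map of commutative differential graded algebras.

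Next, I would invoke the F{\'e}lix-Halperin module-theoretic characterization: for a minimal Sullivan model $(\Lambda U, d)$ of a simply connected space $X$, one has $\cat(X_\Q) \leq n$ if and only if the projection
$$
\rho_n \colon (\Lambda U, d) \longrightarrow \bigl(\Lambda U/\Lambda^{>n}U,\, \bar d\, \bigr)
$$
admits a homotopy retraction in the category of differential graded $(\Lambda U, d)$-modules; here $\Lambda^{>n}U$ is the ideal of elements of word length exceeding $n$, whose stability under $d$ is exactly the minimality condition. Assuming $\cat(Z_\Q) \leq n$, I would fix a DG-module homotopy retraction $s \colon \Lambda W/\Lambda^{>n}W \to \Lambda W$ of the projection in the model of $Z$. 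The crucial observation to exploit is that, with $\phi$ surjective and $Z$ simply connected, a word-length recursion yields $\phi(\Lambda^{>n}W) = \Lambda^{>n}V$, so the base-change functor $-\otimes_{\Lambda W}\Lambda V$ carries the projection for $\Lambda W$ precisely to its counterpart for $\Lambda V$. Tensoring $s$ accordingly produces the desired homotopy retraction of $\Lambda V \to \Lambda V / \Lambda^{>n}V$, and a final appeal to the characterization yields $\cat(Y_\Q) \leq n$.

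The main obstacle is assembling the two Sullivan-theoretic ingredients: the promotion of $\phi$ from a morphism that is surjective on indecomposables to one that is surjective on the whole algebra, and the identification $\phi(\Lambda^{>n}W) = \Lambda^{>n}V$. Both rely crucially on the simple connectivity hypothesis to force the relevant recursions to terminate in any given total degree. Once these are in place, the transfer of the retraction via base change is a routine functorial manipulation, and I expect the conclusion $\cat(Y_\Q) \leq \cat(Z_\Q)$ to follow directly.
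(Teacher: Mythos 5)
The paper does not actually reprove the Mapping Theorem by minimal models: it quotes \cite[Th.28.6]{F-H-T} and observes that the statement is the special case $X=*$ of Corollary~\ref{introcor: secat mapping}, whose proof is topological --- the fibre inclusion $\Omega Z_\Q \to F_\Q$ into the homotopy fibre of $f_\Q$ is surjective on rational homotopy groups, hence admits a section by an H-space argument, so $F_\Q \to Y_\Q$ is nullhomotopic and $\cat(Y_\Q)=\secat(F_\Q\to Y_\Q)\le \cat(Z_\Q)$. Your route is the classical algebraic one, and its first two steps are sound: injectivity of $f_{\Q\#}$ dualizes to surjectivity of the linear part $\bar\phi\colon W\to V$, which by induction on degree (using simple connectivity) already forces $\phi$ itself to be surjective, and then indeed $\phi(\Lambda^{>n}W)=\Lambda^{>n}V$.

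The gap is in the transfer of the retraction, which is exactly where the theorem stops being formal. A homotopy retraction of $\rho_n$ in DG $\Lambda W$-modules is not a strict map out of $\Lambda W/\Lambda^{>n}W$; it is defined on a semifree (relative Sullivan) resolution $P\stackrel{\simeq}{\longrightarrow}\Lambda W/\Lambda^{>n}W$. Applying $-\otimes_{\Lambda W}\Lambda V$ to $P$ computes the \emph{derived} tensor product, and $\Lambda V$ is in general not flat over $\Lambda W$: for instance $\Lambda(v)=\Lambda(w_1,w_2)/(w_2)$ has nonvanishing higher $\mathrm{Tor}$ over $\Lambda(w_1,w_2)$, even though $w_1\mapsto v$, $w_2\mapsto 0$ is a perfectly admissible surjective representative of a map injective on rational homotopy. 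Consequently $P\otimes_{\Lambda W}\Lambda V$ need not be quasi-isomorphic to $\Lambda V/\Lambda^{>n}V$, despite the underived identification $(\Lambda W/\Lambda^{>n}W)\otimes_{\Lambda W}\Lambda V\cong\Lambda V/\Lambda^{>n}V$; concretely, a generator $z$ of the relative model with $Dz=w_1w_2$ becomes a spurious non-bounding cycle after base change when $\phi(w_2)=0$. So tensoring $s$ yields a retraction of the wrong map, not of $\rho_n^V$, and the ``routine functorial manipulation'' is precisely the nontrivial content of \cite[Th.28.6]{F-H-T}. (A smaller imprecision: the module-retraction criterion characterizes $\mathrm{Mcat}$, whose equality with rational category is Hess's theorem rather than part of the F\'elix--Halperin characterization, which asks for a retraction of CDGAs.)
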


We retrieve the mapping theorem from \corref{introcor: secat mapping} by taking $X = *$, whereupon we have $\secat(g_\Q) = \cat(Z_\Q)$; (I) reduces to the condition that $f_\Q{_\#}$ be injective and (II) becomes vacuous; and the conclusion then reads $\cat(Y_\Q) \leq \cat(Z_\Q)$.  The argument that we give for \corref{introcor: secat mapping}---essentially included in the proof of \thmref{introthm: rational cat Y_1 Y_2}---is recognizably a direct generalization of  the proof of the mapping theorem, too.  This connection accounts for  the title of the paper.
 
In the two previous  results, the rationalized invariants $\secat(X_\Q)$, $\cat(X_\Q)$, and $\TC(X_\Q)$ or $\TC_n(X_\Q)$ each give a lower bound for their corresponding non-rational invariants 
$\secat(X)$, $\cat(X)$, and $\TC(X)$ or $\TC_n(X)$ respectively.  Thus, lower bounds for the rationalized invariants also provide lower bounds for the non-rational invariants. 
We emphasize that, although these results are in rational homotopy, nonetheless they have consequences for ordinary (non-rational) topological complexity.   In \exref{ex: TC(XvX) = 6}, we illustrate how, under the right conditions, our rational results may be used to determine the value of  $\TC(-)$ in a concrete case.

Our rational results also lead to interesting consequences for the global rational homotopy structure of finite complexes.  We mention two such here, to suggest the kinds of conclusions we are able to draw.  For $X$ a simply connected, finite complex, we say that $X$ is \emph{(rationally) hyperbolic} if it has infinitely many non-zero rational homotopy groups.  In some sense, this is the ``generic" behaviour of a finite complex (see \cite[Part VI]{F-H-T}).   If a simply connected space $X$ has $\TC(X) = 2$, then $\TC(X_\Q) \leq 2$, and also $\cat(X_\Q) \leq 2$.  The interest in the following result lies in the case in which $\cat(X_\Q) = 2$. It is included as part of \corref{cor: TC 2 A-F}.

\begin{introtheorem}\label{introthm: TC A-F}
Let $X$ be a simply connected,  hyperbolic finite complex.
If $\TC(X_\Q) = 2$, then $X$ has some connective cover that is a rational co-H-space.  In particular, $\pi_*(\Omega X) \otimes \Q$ contains a free Lie algebra on two generators.
\end{introtheorem}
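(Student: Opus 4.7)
The plan is to apply \thmref{introthm: rational cat Y_1 Y_2} in the case $n=2$ to a pair of maps into $X$ whose rational homotopy images sit in disjoint ranges of degrees, and then to exploit the fact that any simply connected space $Y$ with $\cat(Y_\Q) = 1$ is rationally a wedge of spheres, thereby producing a rational co-H-space connective cover.

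First I would secure an odd integer $m \ge 3$ with $\pi_m(X) \otimes \Q \ne 0$; such an $m$ must exist because if all nonzero rational homotopy of $X$ were in even degrees, then the generating space $V$ of the minimal Sullivan model of $X_\Q$ would be concentrated in even degrees, forcing $d \equiv 0$ (as $\Lambda V$ would have no odd-degree components) and hence $H^*(X;\Q) = \Lambda V$ to be infinite-dimensional, contradicting that $X$ is a finite complex. Choose a nonzero class $\alpha \in \pi_m(X) \otimes \Q$ represented by a map $f_2 \colon S^m \to X$, and let $f_1 \colon X\langle m \rangle \to X$ be the connective cover that kills $\pi_i$ for $i \le m$. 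Since $m$ is odd, $\pi_*(S^m) \otimes \Q$ is concentrated in degree $m$, so $(f_{2\Q})_\#$ is injective with image the line $\Q\alpha \subseteq \pi_m(X_\Q)$; and $(f_{1\Q})_\#$ is zero in degrees $\le m$ and an isomorphism in degrees $> m$, hence also injective. Because these images sit in disjoint ranges of degrees of $\pi_*(X_\Q)$, the complementarity hypothesis of \thmref{introthm: rational cat Y_1 Y_2} holds automatically, yielding
\[
\cat(S^m_\Q) + \cat(X\langle m \rangle_\Q) \;\le\; \TC(X_\Q) \;=\; 2.
\]

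Because $m$ is odd, $\cat(S^m_\Q) = 1$, so $\cat(X\langle m \rangle_\Q) \le 1$. Hyperbolicity forbids equality at $0$: if $\cat(X\langle m \rangle_\Q) = 0$, then $X\langle m \rangle_\Q \simeq *$, forcing $\pi_i(X) \otimes \Q = 0$ for all $i > m$, contrary to $X$ having infinitely many nonzero rational homotopy groups. Thus $\cat(X\langle m \rangle_\Q) = 1$, and by the rational characterization of category-one spaces, $X\langle m \rangle_\Q \simeq \bigvee_j S^{n_j}_\Q$. This establishes the first assertion: $X\langle m \rangle$ is a connective cover of $X$ that is a rational co-H-space. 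For the second assertion, hyperbolicity passes from $X$ to $X\langle m \rangle$, while a single rational sphere has only finitely many nonzero rational homotopy groups, so the wedge must have at least two summands. The rational homotopy Lie algebra of $\Omega \bigvee_j S^{n_j}$ is the free graded Lie algebra on the desuspended generators, so $\pi_*(\Omega X\langle m \rangle) \otimes \Q$ contains a free Lie algebra on two generators in degrees $\ge m$. Since the loop map $\Omega X\langle m \rangle \to \Omega X$ is a rational isomorphism in degrees $\ge m$, this free Lie algebra on two generators embeds into $\pi_*(\Omega X) \otimes \Q$.

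The main obstacle is the opening step: one must both locate an odd $m$ with nonzero rational homotopy in degree $m$ (where finiteness of $X$ enters essentially) and arrange the two input maps so that the complementarity hypothesis of \thmref{introthm: rational cat Y_1 Y_2} is transparently satisfied. Once that is in place, the conclusion combines the parity of $m$ (forcing $\cat(S^m_\Q) = 1$) with hyperbolicity (forcing $\cat(X\langle m \rangle_\Q) \ne 0$) to pin $\cat(X\langle m \rangle_\Q)$ at exactly $1$, from which the co-H-space structure and the free Lie algebra on two generators follow.
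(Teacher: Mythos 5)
Your proposal is correct and follows essentially the same route as the paper: the authors likewise feed an essential map from an odd rational sphere $S^{2r+1}_\Q$ together with the connective cover $X^{[2r+1]}_\Q\to X_\Q$ into \thmref{introthm: rational cat Y_1 Y_2} to force $\cat(X^{[2r+1]}_\Q)\le \TC(X_\Q)-1=1$, then use hyperbolicity to rule out contractibility and to see that the resulting free homotopy Lie algebra of the cover has at least two generators. The only (immaterial) differences are that the paper takes the \emph{lowest} odd degree with nonzero rational homotopy and first splits off the case $\cat(X_\Q)=1$, whereas you observe that any odd $m$ works and that no case split is needed.
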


Now a well-known, long-standing open conjecture in rational homotopy is that the rational homotopy Lie algebra $\pi_*(\Omega X) \otimes \Q$ of a simply connected,  finite complex that is hyperbolic should have a sub-Lie algebra that is free on two generators.   This is the so-called \emph{Avramov-F{\'e}lix conjecture}, and it appears as Problem 4 in \cite[Sec.39]{F-H-T}.  If true, the conjecture would help explain the phenomenon of exponential growth of rational homotopy groups, about which a great deal of work has been done.
The conjecture has been established for $X$ with $\cat(X_\Q) =  2$ \cite{F-H-T84, Fe-Th86}.  However, our conclusion here is somewhat stronger and  follows by a simple argument, once given our basic result.  With our methods, we are able to glean various conclusions in this direction, such as the following new cases of the Avramov-F{\'e}lix conjecture (included in \corref{cor: TC 3 A-F}).  Recall again that $\TC_2(X) = \TC(X)$.

\begin{introcorollary}\label{introcor: TC 3 A-F}
Let $X$ be a simply connected, hyperbolic finite complex with $\cat(X_\Q)$ $= 3$ and $\TC_n(X_\Q) = 3n-3$ for some $n\geq 3$.  Then $\pi_*(\Omega X) \otimes \Q$ contains a free Lie algebra on two generators.
\end{introcorollary}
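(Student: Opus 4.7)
The plan is to apply \thmref{introthm: rational cat Y_1 Y_2} with a cleverly chosen pair of maps, reducing the problem to the already-established $\cat_\Q\leq 2$ case of the Avramov--F\'elix conjecture on a subsidiary space.  Substituting the hypotheses $\cat(X_\Q)=3$ and $\TC_n(X_\Q)=3n-3$ into the inequality of that theorem collapses it to
\[
\cat(Y_{1\Q}) + \cat(Y_{2\Q}) \leq 3
\]
for every admissible pair $f_j\colon Y_j\to X$.

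To build such a pair, first note that because $X$ is a hyperbolic finite complex, the rational homotopy Lie algebra $L=\pi_*(\Omega X)\otimes\Q$ cannot be concentrated in odd degrees: were it so, graded antisymmetry would force $L$ to be abelian, and $X$ would be rationally a product of even-degree Eilenberg--MacLane spaces, in conflict with the finite-dimensionality of $H^*(X;\Q)$.  Hence $\pi_{2k+1}(X)\otimes\Q\neq 0$ for some $k\geq 1$; pick such a non-zero class $\alpha$ and set $Y_1=S^{2k+1}$ with $f_1$ realising $\alpha$.  Let $Y_2 = X\langle 2k+1\rangle$ denote the $(2k+1)$-connective cover of $X$, with $f_2$ the canonical map.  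Then each $(f_{j\Q})_\#$ is injective on rational homotopy, and $\im(f_{1\Q\#}) \cap \im(f_{2\Q\#}) = 0$ in every degree (in degree $2k+1$ the two images are $\Q\cdot\alpha$ and $0$ respectively, while outside that degree at least one of them is trivial).  Since $\cat(Y_{1\Q}) = 1$, the displayed bound forces $\cat(Y_{2\Q}) \leq 2$.

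The space $Y_2$ is simply connected, of finite rational type, and rationally hyperbolic, since its rational homotopy groups agree with those of $X$ above degree $2k+1$.  If $\cat(Y_{2\Q}) = 1$, then $Y_{2\Q}$ is a rational co-H-space, hence rationally a wedge of spheres; hyperbolicity forces infinitely many wedge summands, and $\pi_*(\Omega Y_2)\otimes\Q$ is then a free graded Lie algebra on infinitely many generators.  If instead $\cat(Y_{2\Q}) = 2$, one invokes the known $\cat_\Q=2$ case of the Avramov--F\'elix conjecture (\cite{F-H-T84, Fe-Th86}) to reach the same conclusion.  In either case $\pi_*(\Omega Y_2)\otimes\Q$ contains a free Lie subalgebra on two generators, which transfers to $\pi_*(\Omega X)\otimes\Q$ along the rationally injective map $f_2$.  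The principal obstacle is verifying that the $\cat_\Q=2$ case of A--F applies to $Y_2$: that result is usually stated for finite CW complexes, whereas $Y_2=X\langle 2k+1\rangle$ is only of finite rational type.  One would handle this either by checking that the published proofs in \cite{F-H-T84, Fe-Th86} go through under the weaker hypothesis of finite rational category plus finite type, or by replacing $Y_2$ with a finite-complex approximation preserving rational injectivity into $X$ and the bound $\cat_\Q\leq 2$.
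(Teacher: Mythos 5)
Your argument is essentially the paper's own: feeding the pair $Y_1 = S^{2k+1}$, $Y_2 = X^{[2k+1]}$ into \thmref{introthm: rational cat Y_1 Y_2} to get $\cat(Y_{2\Q}) \leq \cat(X_\Q) - 1 = 2$ is precisely part (1) of the paper's \thmref{thm: cat cover goes down}, and the paper likewise finishes by citing the known $\cat_\Q \leq 2$ cases of Avramov--F\'elix for the connective cover (without addressing the finiteness caveat you flag, which applies equally to the paper's citation). The only slip is immaterial: hyperbolicity of $Y_{2\Q}$ forces at least two wedge summands in the co-H case, not necessarily infinitely many, and two is all you need.
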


To complete this outline of our results, we mention that \thmref{introthm: cat Y_1 Y_2} may be specialized  to the main result of  \cite{GLO13a}, which may be viewed as a companion article to this one.  In \cite{GLO13a}, we establish a new lower bound for $\TC(X)$ with $X = K(G, 1)$ an aspherical space, in terms of the cohomological dimension of a certain auxiliary subgroup of $G \times G$.  Our  proof of the result there is rather different from our proof of  \thmref{introthm: cat Y_1 Y_2}, and it involves the so-called \emph{one-dimensional category}, denoted $\cat_1(-)$, of the auxiliary group.   The applications of our result in \cite{GLO13a}  complement those we obtain here, as they concern aspherical spaces.

The paper is organized as follows.  In \secref{sec: TC integral} we work in ordinary homotopy theory.  \thmref{introthm: cat Y_1 Y_2} of this introduction appears there as \thmref{thm: cat Y_1 Y_2}.    This section also contains \corref{introcor: cat F B},  \corref{introcor: wedge}, and several concrete applications of these results.   \secref{sec: rational} contains our main rational result which is  \thmref{introthm: rational cat Y_1 Y_2} of this introduction.  In \exref{ex:MTC example}, we compare our result with the approach of \cite{F-G-K-V06}, and in \corref{cor: rational TC = 1} we use our result to analyze the cases in which $\TC(X_\Q) = 1$ or $\TC_n(X_\Q) = n-1$ for $n \geq 3$.  We end this section with \corref{cor: zero bracket}, a further application of \thmref{introthm: rational cat Y_1 Y_2} in which we establish a connection between  $\TC(X_\Q)$ (or $\TC_n(X_\Q)$) and the bracket structure in $\pi_*(\Omega X) \otimes \Q$. 
In \secref{sec: rational to integral}, we briefly illustrate some situations in which our rational results may be levered to yield calculations of (integral) $\TC(X)$ for certain spaces $X$.  In the final \secref{sec:A-F}, we apply \thmref{introthm: rational cat Y_1 Y_2} to analyze the behavior of $\TC(-)$ (or $\TC_n(-)$) with respect to connective covers, and draw our conclusions concerning the Avramov-F{\'e}lix conjecture.

We finish this introduction with a brief r{\'e}sum{\'e} of definitions and basic facts.  
Recall that  $\cat(X)$---the Lusternik-Schnirelmann category of $X$---is the smallest $n$ for which there is an open covering $\{ U_0, \ldots, U_n \}$ by $(n+1)$ open sets, each of which is contractible in $X$.  
The \emph{sectional category} of a fibration $p \colon E \to B$, denoted by $\secat(p)$, is the smallest number $n$ for which there is an open covering $\{ U_0, \ldots, U_n \}$ of $B$ by $(n+1)$ open sets, for each of which there is a local section $s_i \colon U_i \to E$ of  $p$, so that $p\circ s_i = j_i \colon U_i \to B$, where $j_i$ denotes the inclusion.  
We refer to \cite{CLOT03} for a general introduction to L-S category and related topics, such as sectional category. 
 Let $PX$ denote the space of (free) paths on a space $X$.  There is a fibration $P_2\colon PX \to X\times X$, which evaluates a path at initial and final point: for $\alpha \in PX$, we have $P_2(\alpha) = \big(\alpha(0), \alpha(1)\big)$.  This is a fibrational substitute for the diagonal map $\Delta \colon X \to X \times X$.  We define the \emph{topological complexity} $\TC(X)$ of $X$ to be the sectional category $\secat\big( P_2\big)$ of this fibration.  

We also consider the ``higher analogues"  of topological complexity introduced by Rudyak in \cite{Rud10} (see also \cite{Rud10b} and \cite{BGRT10}).  This notion may also be motivated by a motion planning problem of a constrained type.
Let $n \geq 2$ and consider the fibration
\[P_n \colon PX \to X \times \cdots \times X = X^n,\]
defined by dividing the unit interval $I = [0, 1]$ into $(n-1)$ subintervals of equal length, with $n$ subdivision points $t_0 = 0, t_1 = 1/(n-1), \ldots, t_{n-1} = 1$ (thus $(n-2)$ subdivision points interior to the interval), and then evaluating at each of the $n$ subdivision points:
\[P_n(\alpha) = \big(  \alpha(0), \alpha(t_1), \ldots, \alpha(t_{n-2}), \alpha(1)\big),\]
for $\alpha \in PX$.  This is a fibrational substitute for the $n$-fold diagonal $\Delta_n\colon X \to X^n$.   Then the \emph{higher topological complexity} $\TC_n(X)$ is defined as $\TC_n(X) = \secat(P_n)$.  

For the connection to the motion planning problem, we refer to the original articles  \cite{Far03, Rud10}.  Note that these authors use un-normalized $\TC_n(-)$, which is one more than our (normalized) $\TC_n(-)$.  Most of our results apply to ordinary $\TC(X)$ as well as higher $\TC_n(X)$, for $n \geq 3$.  

There are several well-known and basic facts which we use throughout the article to compare sectional categories of various maps.  We state them here; each is easily justified directly from the definitions. 
First, suppose given a fibration $p \colon E \to B$ and any map $f \colon B' \to B$.  Form the  pullback $p' \colon E' \to B'$ of $p$ along $f$.  
Then we have $\secat(p') \leq \secat(p)$.   Next, if $E$ is contractible or, more generally, if $p\colon E \to B$ is nulhomotopic,  then we have $\secat(p) = \cat(B)$.  Combining these ingredients readily leads to general inequalities  
$$\cat(X^{n-1}) \leq \TC_{n}(X) \leq \cat(X^{n}) $$
for each $n \geq 2$.   These basic facts are explained in  \cite{BGRT10, LuSc13}, amongst other places.

We frequently use $n$-connective covers of a space.  Our notation for this is $X^{[n]}$.  This is the homotopy fibre of the map from $X$ to its $n$th Postnikov section, and it is a connected space that satisfies $\pi_i(X^{[n]}) = 0$ for $i \leq n$, and  $\pi_i(X^{[n]}) = \pi_i(X)$ for $i \geq n+1$.

\section{A lower bound on (higher) topological complexity}\label{sec: TC integral}

From now on, suppose that spaces $X$ and $Y$ are connected and of the homotopy type of a CW complex.  Constructions from them, such as $\Omega X$ or  pullbacks that involve them,  may be disconnected.
We begin with a result that may be well-known, but which we cannot find in the literature in the form that we need.     We include a proof here, for completeness.  The case $n=2$ is proved in  \cite[Prop.3.2]{McC90} (see also the proof of \cite[Th.7]{Co-Fa10}).

In the following Proposition,  $\mu\colon \Omega X \times \Omega X \to \Omega X$ and $\iota\colon \Omega X \to \Omega X$ denote the usual loop multiplication map and  the inverse map of loops.  Also, $\pi_{j, j+1} = (\pi_j, \pi_{j+1}) \colon (\Omega X)^n \to (\Omega X) \times (\Omega X)$, for $j = 1, \ldots, n-1$, denotes the projection onto the two consecutive $j$th and $(j+1)$st factors of the product.

\begin{proposition}\label{prop: connecting hom}
For $n \geq 2$, consider  the fibration sequence 
$$\xymatrix{(\Omega X)^{n-1} \ar[r] & PX \ar[r]^{P_n} &  X^n}$$
of the fibration $P_n$ used in the above definition of $\TC_n(X)$.   Then the connecting map of this fibration sequence may be identified as a map
$$\partial = (\partial_1, \ldots, \partial_{n-1}) \colon  \Omega(X^n) = (\Omega X)^n \to  (\Omega X)^{n-1},$$
with coordinate functions $\partial_j = \mu\circ (\iota \times 1)\circ \pi_{j, j+1}$ 
for each $j = 1, \dots, n-1$.
On homotopy groups, we may identify the induced homomorphism  $\partial_\# \colon \pi_r(\Omega X^n) \to \pi_r(\Omega X^{n-1})$, for $r \geq 1$, as 
a homomorphism of abelian groups
$$\partial_\# \colon \bigoplus_{j=1}^{n} \pi_{r+1}(X) \to \bigoplus_{j=1}^{n-1} \pi_{r+1}(X),$$
where, for $\mathbf{a} = (a_1, \dots, a_n) \in \pi_r(\Omega X^n) = \oplus_{j=1}^{n} \pi_{r+1}(X)$,  we have
$$\partial_\#(\mathbf{a}) = (- a_1 + a_2, - a_2 + a_3, \dots, - a_{n-1} + a_n).$$
On  sets of components,  the induced map of based sets $\partial_\# \colon \pi_0(\Omega X^n)$ $\to \pi_0(\Omega X^{n-1})$   may be  identified as 
$$\partial_\# \colon \prod_{j=1}^{n} \pi_1(X) \to \prod_{j=1}^{n-1} \pi_1(X),$$
with $\partial_\#(\mathbf{a}) = ((a_1)^{-1}a_2, (a_2)^{-1} a_3, \dots, (a_{n-1})^{-1}a_n)$.
\end{proposition}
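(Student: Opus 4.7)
The plan is to unwind the definition of the connecting map for the fibration $P_n\colon PX \to X^n$ and compute it explicitly on based loops.  First I would identify the fibre over the basepoint $(*, \dots, *) \in X^n$: a path $\alpha \in PX$ lies in this fibre precisely when $\alpha(t_j) = *$ at every subdivision point $t_j = j/(n-1)$, and restricting $\alpha$ to the successive subintervals $[t_{j-1}, t_j]$ gives the standard identification of the fibre with $(\Omega X)^{n-1}$.

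Next I would use the standard description of the connecting map via path lifting.  Given a loop $\beta = (\beta_1, \dots, \beta_n) \in \Omega(X^n)$, I lift it along $P_n$ to a path $\tilde\beta\colon I \to PX$ starting at the constant path at $*$; such a lift exists by the homotopy lifting property since $\beta(0) = (*, \dots, *)$.  Then $\partial(\beta) := \tilde\beta(1)$ lies in the fibre.  The path $\tilde\beta$ is equivalent to a homotopy $H\colon I \times I \to X$ satisfying $H(0, s) = *$ for all $s$ and $H(t, t_j) = \beta_{j+1}(t)$ for $j = 0, 1, \dots, n-1$ (using the conventions $t_0 = 0$, $t_{n-1} = 1$).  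The restriction of $H(1,-)$ to the subinterval $[t_{j-1}, t_j]$, suitably reparametrized, will be the $j$-th coordinate $\partial_j(\beta) \in \Omega X$ of $\partial(\beta)$.

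The key computation then takes place on each square $I \times [t_{j-1}, t_j]$.  Traversing its boundary counterclockwise starting from $(0, t_{j-1})$ yields (after discarding the constant portion coming from the bottom edge, where $H$ is constant at $*$) the loop $\beta_j \cdot \partial_j(\beta) \cdot \beta_{j+1}^{-1}$ based at $*$.  Since this loop bounds a disk via $H$ it is nullhomotopic in $X$, so $\partial_j(\beta) \simeq \beta_j^{-1} \cdot \beta_{j+1}$, which is exactly $\mu \circ (\iota \times 1) \circ \pi_{j, j+1}(\beta)$.  This establishes the loop-level formula.

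The homotopy group statements will then follow by naturality, applying $\pi_r$ to the factorization $\partial = (\partial_1, \dots, \partial_{n-1})$ and using the standard identification $\pi_r(\Omega X) \cong \pi_{r+1}(X)$: for $r \geq 1$, concatenation and inversion of loops induce addition and negation in the (abelian) group, yielding $\partial_\#(\mathbf{a}) = (-a_1 + a_2, \dots, -a_{n-1} + a_n)$; for $r = 0$ they induce the native group operations of $\pi_1(X)$, giving $\partial_\#(\mathbf{a}) = (a_1^{-1}a_2, \dots, a_{n-1}^{-1}a_n)$.  No real technical obstacle arises; the hardest part will be the bookkeeping of parametrizations and sign conventions needed to ensure that the formula comes out in the stated order $\beta_j^{-1}\cdot \beta_{j+1}$ rather than its reverse.
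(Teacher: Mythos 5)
Your proposal is correct and reaches the stated formula, but it computes the connecting map by a different (equally standard) mechanism than the paper. You use the path-lifting description: lift a loop $\beta$ in $X^n$ to a path in $PX$ starting at the constant path, take its endpoint in the fibre, and identify the coordinates by a boundary-of-the-square argument on each $I\times[t_{j-1},t_j]$. The paper instead realizes the homotopy fibre $\mathbf{T}^{P_n}$ as an explicit pullback, writes down a point-set homotopy inverse $\beta^{-1}(\alpha,\gamma)=(\overline{\gamma_1}\,\alpha[1]\,\gamma_2,\dots,\overline{\gamma_{n-1}}\,\alpha[n-1]\,\gamma_n)$ to the fibre-to-homotopy-fibre equivalence, and obtains the connecting map as the composite $\beta^{-1}\circ\phi$ with the whisker map $\phi(\gamma)=(C_{x_0},\gamma)$; the formula $\partial(\gamma)=(\overline{\gamma_1}\gamma_2,\dots,\overline{\gamma_{n-1}}\gamma_n)$ then drops out with no choice of lift and no homotopy to track, and one genuinely gets a map of spaces rather than a map defined only up to homotopy. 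The trade-off is exactly the one you anticipate at the end: your route requires the (routine but real) check that the lift-endpoint construction is well defined up to homotopy and matches the chosen convention for $\partial$ up to at most composing with loop inversion --- a discrepancy that is harmless here, since the later arguments only use injectivity and surjectivity of the induced maps. The passage to homotopy groups and to $\pi_0$ is the same in both arguments.
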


\begin{proof}
Recall that, for any based map of based spaces $f \colon X \to Y$, we obtain the homotopy fibre $\mathbf{T}^f$ as a pullback of the path fibration along $f$, thus
$$\xymatrix{ \mathbf{T}^f \ar[r] \ar[d] & \mathcal{P} Y \ar[d]^{p_0} \\
X \ar[r]_{f} & Y.}$$
Here, $\mathcal{P} Y$ denotes the based path space $\mathcal{P} Y = \{ \gamma \colon I \to Y \mid \gamma(1) = y_0 \}$ and $p_0(\gamma) = \gamma(0)$.  Then, if $f \colon X \to Y$ is itself a fibration, with fibre $F = f^{-1}(y_0)$, we have a homotopy equivalence $\beta \colon F \to  \mathbf{T}^f$, which maps $x \mapsto (x, C_{y_0}) \in \mathbf{T}^f$.  Also, we have a whisker map $\phi\colon \Omega Y \to \mathbf{T}^f$ given by $\phi(\zeta) = (x_0, \zeta)$ for each based loop $\zeta$.  The connecting map $\partial \colon \Omega Y \to F$ of the fibration is then given by $\partial = \beta^{-1} \circ \phi$, where $\beta^{-1}$ is a homotopy inverse for $\beta$.  This is summarized in the following diagram:
$$\xymatrix{ & F \ar[rd]^{\mathrm{incl.}} \ar[d]^{\beta}_{\simeq} \\
\Omega Y \ar[r]_{\phi} \ar[ru]^{\partial} &   \mathbf{T}^f \ar[r] & X \ar[r]^{f} & Y.}$$

Now consider the fibration $P_n \colon PX \to X^n$ as above.  The fibre here is $F = (\Omega X)^{n-1}$, and the homotopy fibre is
$$ \mathbf{T}^{P_n} = \{ (\alpha, \gamma) \in PX \times \mathcal{P}X^n \mid \alpha(t_i) = \gamma_{i+1}(0), i=0,  \ldots, n-1 \},$$
where $\{t_i\}$ is the subdivision of $[0,1]$ with $t_i = i/(n-1)$, as in the introduction, and we  have written the coordinate functions of $\gamma$ as $\gamma = (\gamma_1, \ldots, \gamma_n)\colon I \to X^n$.  The homotopy equivalence $\beta\colon (\Omega X)^{n-1} \to \mathbf{T}^{P_n}$ from fibre to homotopy fibre is given by $\beta(\eta) = (\eta, C_{\mathbf{x_0}})$, where $C_x$ denotes the constant path at $x$, and $\mathbf{x_0} \in X^n$ denotes the base point  $\mathbf{x_0} = (x_0, \dots, x_0)$.  
Now for $\alpha \in PX$, let $\alpha[i] \colon [t_{i-1}, t_i] \to X$ denote the restriction of $\alpha$ to the subinterval  $[t_{i-1}, t_i]$, for $i = 1, \ldots, n-1$.
Then the inverse homotopy equivalence may be written explicitly as $\beta^{-1} \colon \mathbf{T}^{P_n} \to (\Omega X)^{n-1}$, with
$$ \beta^{-1}(\alpha, \gamma) = (\overline{\gamma_1}\alpha[1] \gamma_2, \overline{\gamma_2}\alpha[2] \gamma_3, \ldots, \overline{\gamma_{n-1}}\alpha[n-1] \gamma_n).$$
Here, the notation $\overline{\xi}$ denotes the inverse path to $\xi$, i.e., $\overline{\xi}(t) = \xi(1-t)$, and juxtaposition of paths denotes their usual composition. 

From the pullback diagram
$$\xymatrix{\Omega X^n  \ar@/^1pc/[rrd]^{\mathrm{incl.}} \ar@/_1pc/[ddr]_{{*} = C_{x_0}} \ar@{..>}[rd]^{\phi}\\
 & \mathbf{T}^{P_n} \ar[r] \ar[d] & \mathcal{P} X^n \ar[d]^{p_0} \\
 & PX \ar[r]_{P_n} & X^n,}$$
we obtain the whisker map $\phi\colon \Omega X^n \to \mathbf{T}^{P_n}$ as 
$\phi(\gamma) = ( C_{x_0}, \gamma)$, whence we have the connecting map $\partial\colon \Omega X^n \to \Omega X^{n-1}$ as
$$\partial(\gamma) = \beta^{-1}\circ \phi(\gamma) = (\overline{\gamma_1} \gamma_2, \overline{\gamma_2} \gamma_3, \ldots, \overline{\gamma_{n-1}} \gamma_n).$$
In terms of maps, then, we may identity the $j$th component of the connecting map as 
$$\partial = \mu\circ (\iota \times 1)\circ \pi_{j, j+1} \colon \Omega X^n  \to \Omega X \times \Omega X   \to \Omega X \times \Omega X  \to \Omega X,$$
for $j = 1, \dots, n-1$.   The assertions about the homomorphisms induced on homotopy groups and sets of components follow, as $\mu$ induces the usual addition of homotopy elements and $\iota$ the inverse (anti-)homomorphism. 
\end{proof}

Note that, in the above result, whilst $\pi_r(\Omega X^n)$ and $\pi_r(\Omega X^{n-1})$ are both groups for $r \geq 0$, they may be non-abelian for $r=0$, and so the induced map $\partial_\#$ may fail to be a homomorphism for $r=0$. 

We will need the following generalization of the direct product of (non-abelian) groups: say that subgroups $A$ and $B$ of a  group $G$ are \emph{complementary subgroups} if they (1) span $G$, that is, if any element of $G$ may be written as a product $a b$ for some $a \in A$ and $b \in B$, and (2) also intersect trivially, that is, if $A \cap B = \{e\}$.   
Note that, if  $A$ and $B$ are complementary subgroups of an \emph{abelian} group $G$, then $G$ is isomorphic to the direct sum $A\oplus B$.

\begin{theorem}\label{thm: cat Y_1 Y_2}
Suppose given maps $f_j \colon Y_j \to X$ of connected spaces, $j = 1,2$, such that, for each $i \geq 1$, we have:
\begin{itemize}
\item[(i)] each $(f_j)_\# \colon \pi_i(Y_j) \to \pi_i(X)$ is an inclusion;
\item[(ii)]  $(f_1)_\# \big(\pi_i(Y_1)\big)$ and $(f_2)_\# \big(\pi_i(Y_2)\big)$ are complementary subgroups in  $\pi_i(X)$.
\end{itemize}
Then for $n \geq 2$, we have
$$ \cat(Y_1 \times Y_2 \times X^{n-2})  \leq \TC_n(X).$$
In particular, with $n = 2$, we have 
$ \cat(Y_1 \times  Y_2)  \leq \TC(X).$
\end{theorem}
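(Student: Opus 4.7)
The plan is to apply the pullback and nulhomotopy facts recalled at the end of the introduction, together with the explicit description of the connecting map from \propref{prop: connecting hom}.

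First I would form the pullback of the fibration $P_n\colon PX \to X^n$ along the map
\[
g \;=\; f_1 \times f_2 \times \mathrm{id}_X^{\,n-2}\colon\; Y_1 \times Y_2 \times X^{n-2} \;\longrightarrow\; X^n,
\]
obtaining a fibration $p'\colon E' \to Y_1 \times Y_2 \times X^{n-2}$ with fibre $(\Omega X)^{n-1}$. By the pullback inequality for sectional category, $\secat(p') \leq \secat(P_n) = \TC_n(X)$. So it suffices to prove
\[
\cat(Y_1 \times Y_2 \times X^{n-2}) \;=\; \secat(p'),
\]
which (by the ``nulhomotopic implies $\secat=\cat$'' fact) will follow once I show that $E'$ is contractible.

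The key step is to show $\pi_r(E')=0$ for all $r\geq 1$ using the long exact homotopy sequence of the fibre sequence $(\Omega X)^{n-1} \to E' \to Y_1 \times Y_2 \times X^{n-2}$. By naturality of connecting maps, the connecting map for $p'$ is obtained from that of $P_n$ (computed in \propref{prop: connecting hom}) by precomposing with $\Omega g$. Thus on $\pi_r$ (for $r\geq 2$), using the abelian formula, the connecting map
\[
\partial'_\#\colon \pi_r(Y_1) \oplus \pi_r(Y_2) \oplus \pi_r(X)^{n-2} \;\longrightarrow\; \pi_r(X)^{n-1}
\]
sends $(a_1,a_2,a_3,\ldots,a_n)$ to
\[
\bigl(-(f_1)_\#(a_1)+(f_2)_\#(a_2),\; -(f_2)_\#(a_2)+a_3,\; -a_3+a_4,\; \ldots,\; -a_{n-1}+a_n\bigr).
\]
Hypothesis (ii) says $\pi_r(X)=(f_1)_\#\pi_r(Y_1) \oplus (f_2)_\#\pi_r(Y_2)$, and hypothesis (i) makes each $(f_j)_\#$ injective, so the first coordinate is a bijection onto $\pi_r(X)$, and the remaining coordinates can be solved inductively for $a_3,\ldots,a_n$. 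Hence $\partial'_\#$ is an isomorphism on $\pi_r$ for $r\geq 2$. The long exact sequence then forces $\pi_r(E')=0$ for $r\geq 1$.

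The only real obstacle is to carry out the same argument at the level of $\pi_1$ and $\pi_0$, where the groups need not be abelian and the connecting map is the multiplicative version $(a_1,\ldots,a_n)\mapsto\bigl((f_1)_\#(a_1)^{-1}(f_2)_\#(a_2),(f_2)_\#(a_2)^{-1}a_3,\ldots,a_{n-1}^{-1}a_n\bigr)$. Here I would use that if $A,B$ are complementary subgroups of $G$ then $G=AB=BA$, so every $g\in\pi_1(X)$ can be written as $(f_1)_\#(a_1)\cdot (f_2)_\#(a_2)$; setting $a_1\mapsto a_1^{-1}$ then adjusts the sign to solve the first coordinate, and $A\cap B=\{e\}$ together with injectivity of $(f_j)_\#$ gives uniqueness. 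The remaining coordinates are then solved inductively exactly as before. This shows $\partial'_\#$ is a bijection also on $\pi_1$ and on $\pi_0$ (as based sets), completing the verification that $E'$ is weakly contractible, hence contractible since it has the homotopy type of a CW complex. Therefore $p'$ is nulhomotopic, so $\secat(p') = \cat(Y_1 \times Y_2 \times X^{n-2})$, and the theorem follows.
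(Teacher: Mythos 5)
Your proposal is correct and follows essentially the same route as the paper: pull back $P_n$ along $f_1\times f_2\times \mathrm{id}_{X^{n-2}}$, use \propref{prop: connecting hom} to show the connecting map of the pulled-back fibration is bijective on all homotopy groups (treating the non-abelian low-degree cases with the complementary-subgroup factorization), conclude the total space is contractible, and then combine $\secat(p')=\cat(Y_1\times Y_2\times X^{n-2})$ with the pullback inequality. No gaps.
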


\begin{proof}
Write $Y = Y_1 \times Y_2\times X^{n-2}$, and $f = f_1 \times f_2 \times 1_{X^{n-2}} \colon Y \to X^n$.  Then let $N(f,P_n)$ denote the pullback  
$$\xymatrix{ N(f,P_n)\ar[r] \ar[d]_{\overline{P_n}} & PX \ar[d]^{P_n} \\
Y \ar[r]_{f} & X^n.}$$
Notice that, since $P_n$ is a fibration, this pullback is also a homotopy pullback.
 Extend this commutative diagram to a homotopy commutative ladder of the fibre sequences of each vertical map, thus:
$$\xymatrix{\Omega Y\ar[r]^{\Omega f} \ar[d]_{\delta} & \Omega X^n \ar[d]^{\partial} \\
 \Omega X^{n-1} \ar@{=}[r] \ar[d] &  \Omega X^{n-1} \ar[d]\\
 N(f,P_n)\ar[r] \ar[d]_{\overline{P_n}} & PX \ar[d]^{P_n} \\
Y \ar[r]_{f} & X^n.}$$
In the long exact homotopy sequence of $\overline{P_n}$, identify the connecting map $\delta\colon \Omega Y \to \Omega X^{n-1}$ as $\delta = \partial\circ \Omega f$, where $\partial$ denotes the connecting map from \propref{prop: connecting hom}.  So on homotopy groups (or homotopy sets, for $r = 0$), we may identify the induced map $\delta_\# \colon \pi_r(\Omega Y) \to \pi_r(\Omega X^{n-1})$ as a map
$$\delta_\# = \partial_\# \circ f_\#\colon \pi_{r+1}(Y_1) \times \pi_{r+1}(Y_2) \times \prod_{j=3}^{n} \pi_{r+1}(X) \to \prod_{j=1}^{n-1} \pi_{r+1}(X),$$
for each $r \geq 0$.  With  $b_j \in \pi_{r+1}(Y_j)$, $j = 1, 2$, and $a_j \in \pi_{r+1}(X)$, $j = 3, \dots, n$, we have
$$\delta_\#(b_1, b_2, a_3, \dots, a_n) = \big( - f_{1\#}(b_1) + f_{2\#}(b_2), - f_{2\#}(b_2) + a_3, - a_3 + a_4, \dots, - a_{n-1} + a_n\big),$$
if $r \geq 1$, and 
$$\delta_\#(b_1, b_2, a_3, \dots, a_n) = \big( ( f_{1\#}(b_1))^{-1} f_{2\#}(b_2), (f_{2\#}(b_2))^{-1} a_3, a_3^{-1}a_4, \dots, (a_{n-1})^{-1}a_n\big)$$
if $r= 0$. 
Because the images $f_{1\#} \big(\pi_{r+1}(Y_1)\big)$ and $f_{2\#} \big(\pi_{r+1}(Y_2)\big)$  span $\pi_{r+1}(X)$, for each $r \geq 0$, $\delta_\#$ is surjective. Because these images intersect only in $0$ (or $\{e\}$, in the case $r = 0$), and we are also assuming that $f_{1\#}$ and $f_{2\#}$ are injective, it follows that $\delta_\#$ is injective.  Thus, $\delta_\# \colon \pi_r(\Omega Y) \to \pi_{r}(\Omega X^{n-1})$ is an isomorphism for each $r \geq 1$, and a bijection of based sets for $r=0$.   Therefore,  $N(f,P_n)$ is weakly contractible, and thus contractible, since it is of the homotopy type of a CW complex.

In the first pullback diagram, then, it follows that $\secat(\overline{P_n}) = \cat(Y)$.  Since this diagram is a pullback, we have
$$\cat(Y) = \secat(\overline{P_n})  \leq \secat(P_n) = \TC_n(X). \qed$$  
\renewcommand{\qed}{}\end{proof}

\begin{corollary}[to the proof of \thmref{thm: cat Y_1 Y_2}]\label{cor: cat secat}
For maps $f_j \colon Y_j \to X$  that satisfy the hypotheses of  \thmref{thm: cat Y_1 Y_2}, we have
$\cat(Y_1) \leq \secat(f_2)$. 
\end{corollary}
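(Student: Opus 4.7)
The plan is to run the same pullback-and-contractibility argument as in the proof of \thmref{thm: cat Y_1 Y_2}, but with the fibration $P_n$ replaced by a fibrational substitute for $f_2$ alone. First I would replace $f_2 \colon Y_2 \to X$ by a homotopy equivalent fibration $\tilde f_2 \colon \tilde Y_2 \to X$, whose fibre $F_2$ is the homotopy fibre of $f_2$; note $\secat(f_2) = \secat(\tilde f_2)$. Forming the pullback
$$\xymatrix{N \ar[r] \ar[d]_{\overline f_2} & \tilde Y_2 \ar[d]^{\tilde f_2} \\ Y_1 \ar[r]_{f_1} & X,}$$
I obtain a fibration $\overline f_2 \colon N \to Y_1$ with fibre $F_2$, and the pullback inequality for sectional category recalled in the introduction gives $\secat(\overline f_2) \leq \secat(\tilde f_2) = \secat(f_2)$.

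The crux of the proof is to show that $N$ is contractible; once this holds, $\overline f_2$ is nulhomotopic, so $\secat(\overline f_2) = \cat(Y_1)$, and the corollary follows. Extending the pullback to a ladder of fibre sequences exactly as in the proof of \thmref{thm: cat Y_1 Y_2}, the connecting map $\delta \colon \Omega Y_1 \to F_2$ of $\overline f_2$ is identified as the composition $\delta = \partial \circ \Omega f_1$, where $\partial \colon \Omega X \to F_2$ is the connecting map of $\tilde f_2$. Since $(f_2)_\#$ is injective in every degree by hypothesis (i), the long exact sequence of $\tilde f_2$ collapses into short exact sequences, yielding an isomorphism $\pi_r(F_2) \cong \pi_{r+1}(X)/(f_2)_\#\big(\pi_{r+1}(Y_2)\big)$ for $r \geq 1$, under which $\delta_\#$ is simply the quotient of $(f_1)_\# \colon \pi_{r+1}(Y_1) \to \pi_{r+1}(X)$. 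Hypotheses (i) and (ii) now force $\delta_\#$ to be both injective (triviality of $\im(f_1)_\# \cap \im(f_2)_\#$ combined with injectivity of $(f_1)_\#$) and surjective (from the spanning property of the complementary subgroups). A parallel pointed-set argument at $r=0$, using the non-abelian version of complementarity exactly as in the proof of \thmref{thm: cat Y_1 Y_2}, shows $\delta_\#$ is a bijection on $\pi_0$ as well. Hence $N$ is weakly contractible, and being of CW type, contractible.

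The main technical point to be careful with is the bookkeeping at $r=0$, since $F_2$ is no longer a loop space and $\pi_0(F_2)$ is only a pointed set (not necessarily a group); but this is exactly the setting already handled in the proof of \thmref{thm: cat Y_1 Y_2}, and the argument transfers without difficulty.
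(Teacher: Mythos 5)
Your proposal is correct and follows essentially the same route as the paper: both form the homotopy pullback of $f_1$ and $f_2$ over $X$, show its total space is contractible, and conclude $\cat(Y_1) = \secat(\overline{f_2}) \leq \secat(f_2)$. The only difference is that you re-derive the contractibility from the long exact sequence of the fibration with fibre $F_2$, whereas the paper simply observes that this homotopy pullback coincides with the space $N(f,P_2)$ already shown to be contractible in the proof of \thmref{thm: cat Y_1 Y_2}.
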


\begin{proof}
Consider the (standard) homotopy pullback 
$$\xymatrix{ M(f_1,f_2)\ar[r]^-{\overline{f_1}} \ar[d]_{\overline{f_2}} & Y_2 \ar[d]^{f_2} \\
Y_1 \ar[r]_{f_1} & X.}$$
That is, 
$$M(f_1, f_2) = \{ (y_1, \alpha, y_2) \in Y_1 \times \map(I, X) \times Y_2 \mid \alpha(0) = f_1(y_1), \alpha(1) = f_2(y_2) \}.$$
Notice that $M(f_1, f_2) = N(f, P_2)$, where $f = f_1 \times f_2\colon Y_1 \times Y_2 \to X \times X$ and $P_2 \colon PX \to X \times X$ are the maps involved in the proof of \thmref{thm: cat Y_1 Y_2} for $n = 2$, and $N(f, P_2)$ is the pullback.  Under the hypotheses of \thmref{thm: cat Y_1 Y_2}, we showed that $N(f, P_2)$ is contractible.  It follows from the homotopy pullback that $\cat(Y_1) = \secat(\overline{f_2}) \leq \secat(f_2)$.
\end{proof}

There is a natural way in which the maps involved in \thmref{thm: cat Y_1 Y_2} arise.

\begin{corollary}\label{cor: cat F B}
Suppose given a fibration sequence $F \to E \to B$ of connected CW complexes.  If the fibration admits a (homotopy) section, then for each $n \geq 2$, we have
$$\cat(F \times B \times E^{n-2}) \leq \TC_n(E).$$
\end{corollary}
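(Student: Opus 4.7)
The plan is to apply \thmref{thm: cat Y_1 Y_2} directly, taking $Y_1 = F$ with $f_1\colon F \hookrightarrow E$ the fibre inclusion, and $Y_2 = B$ with $f_2 = s\colon B \to E$ the given (homotopy) section. Provided the hypotheses (i) and (ii) of that theorem hold for this choice of maps, its conclusion reads precisely $\cat(F \times B \times E^{n-2}) \leq \TC_n(E)$ for every $n \geq 2$, which is what we want. So the entire task is to check the two hypotheses.

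The verification rests on the long exact homotopy sequence of the fibration $F \to E \to B$. Since $p \circ s \simeq 1_B$, the induced map $p_\#\colon \pi_i(E) \to \pi_i(B)$ is split surjective in every positive dimension, which forces each connecting homomorphism $\pi_{i+1}(B) \to \pi_i(F)$ to vanish. Hence for every $i \geq 1$ one obtains a short exact sequence
$$1 \to \pi_i(F) \to \pi_i(E) \to \pi_i(B) \to 1,$$
split by $s_\#$. In particular, both $f_{1\#}$ and $s_\#$ are injective, which gives hypothesis (i).

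For hypothesis (ii), in dimensions $i \geq 2$ the groups are abelian and the split short exact sequence identifies $\pi_i(E)$ with the internal direct sum $f_{1\#}\pi_i(F) \oplus s_\#\pi_i(B)$, so the image subgroups are complementary. In dimension $i = 1$ the split exact sequence realises $\pi_1(E)$ as a semidirect product $\pi_1(F) \rtimes \pi_1(B)$; the factors $f_{1\#}\pi_1(F)$ and $s_\#\pi_1(B)$ then span $\pi_1(E)$ and intersect only at the identity, so they are complementary in the (possibly non-abelian) sense of the definition preceding \thmref{thm: cat Y_1 Y_2}. There is no real obstacle here; the only subtlety is precisely that the complementary-subgroup hypothesis was formulated to cover the non-abelian case, so the $\pi_1$ step requires no additional argument, and the corollary follows from a direct citation of \thmref{thm: cat Y_1 Y_2}.
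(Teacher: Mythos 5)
Your proof is correct and takes essentially the same route as the paper, which likewise deduces the corollary by applying \thmref{thm: cat Y_1 Y_2} with $Y_1 = F$, $Y_2 = B$, $X = E$, $f_1$ the fibre inclusion and $f_2$ the section. The paper leaves the verification of hypotheses (i) and (ii) implicit, whereas you spell out the long exact sequence argument (vanishing connecting maps, split exactness, and the semidirect-product point on $\pi_1$); that verification is accurate.
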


\begin{proof}
The maps $F \to E$ and $B \to E$, with the latter being the section, satisfy the conditions of the theorem, with $F, B, E$ identified with $Y_1, Y_2, X$, respectively. 
\end{proof}

We present several examples of situations covered by \corref{cor: cat F B}.  Here, we use $\widetilde{X}$ to denote the universal cover of $X$.

\begin{corollary}
If $X$ has fundamental group $\Z$, then $\TC(X) \geq \cat(\widetilde{X} \times S^1)$.  If $X$ has fundamental group $\Z \times \Z$, then $\TC(X) \geq \cat(\widetilde{X} \times S^1 \times S^1)$.
\end{corollary}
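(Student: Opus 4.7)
The plan is to apply \corref{cor: cat F B} with $n=2$ to a fibration sequence obtained from the classifying map of the fundamental group. For any connected CW complex $X$, the classifying map $p \colon X \to K(\pi_1(X), 1)$ (replaced up to homotopy by a fibration) has homotopy fibre $\widetilde{X}$, yielding a fibration sequence
$$\widetilde{X} \longrightarrow X \longrightarrow K(\pi_1(X), 1).$$
The two cases of the corollary correspond to $\pi_1(X) = \Z$, where $K(\Z, 1) = S^1$, and $\pi_1(X) = \Z \times \Z$, where $K(\Z \times \Z, 1) = S^1 \times S^1$. The substantive work is to show that $p$ admits a homotopy section in each case; then \corref{cor: cat F B} applied with $n=2$ immediately delivers the stated inequalities, since $\cat(F \times B \times E^{0}) = \cat(F \times B)$.

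In the case $\pi_1(X) = \Z$, choose a loop $\sigma \colon S^1 \to X$ representing a generator of $\pi_1(X)$ that maps under $p_\#$ to the generator of $\pi_1(S^1)$. Then $p \circ \sigma \colon S^1 \to S^1$ induces the identity on $\pi_1$; because $S^1$ is a $K(\Z, 1)$, this composite is homotopic to the identity, so $\sigma$ is a homotopy section.

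In the case $\pi_1(X) = \Z \times \Z$, first choose loops $\gamma_1, \gamma_2 \colon S^1 \to X$ whose homotopy classes form a basis of $\pi_1(X)$ that is carried by $p_\#$ onto the standard basis of $\pi_1(S^1 \times S^1)$. These combine to give a map $\gamma_1 \vee \gamma_2 \colon S^1 \vee S^1 \to X$. The torus $S^1 \times S^1$ is obtained from $S^1 \vee S^1$ by attaching a $2$-cell along the commutator; the corresponding obstruction loop represents $[\gamma_1][\gamma_2][\gamma_1]^{-1}[\gamma_2]^{-1} \in \pi_1(X)$, which vanishes because $\pi_1(X)$ is abelian, hence the loop is null-homotopic in $X$. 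Consequently the map extends to a map $\sigma \colon S^1 \times S^1 \to X$. The composite $p \circ \sigma$ induces the identity on $\pi_1$, and since $S^1 \times S^1$ is aspherical, $p \circ \sigma$ is homotopic to the identity, confirming that $\sigma$ is a homotopy section.

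The main (indeed the only) obstacle is the extension step in the torus case: one must ensure that the $1$-skeleton map $S^1 \vee S^1 \to X$ actually extends over the attaching $2$-cell of the torus. This is precisely where the abelianness of $\pi_1(X) = \Z \times \Z$ enters, making the commutator loop null-homotopic in $X$ so that it bounds a disk. Once the sections are in hand, the rest is a direct application of \corref{cor: cat F B}.
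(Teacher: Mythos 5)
Your proof is correct and follows essentially the same route as the paper: take the classifying map $X \to K(\pi_1(X),1)$, build a homotopy section by mapping in the circles and extending over the torus's $2$-cell using that $\pi_1(X)$ is abelian, then apply Corollary~\ref{cor: cat F B}. The only (cosmetic) difference is that you choose the loops so that $p\circ\sigma$ induces the identity on $\pi_1$ from the start, whereas the paper first gets an isomorphism on $\pi_1$ and then adjusts $\sigma$ by precomposing with a self-homotopy equivalence of the torus.
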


\begin{proof}
We argue for $\pi_1(X) \cong \Z \times \Z$; the argument for $\pi_1(X) \cong \Z$ is identical.   Let $k \colon X \to S^1\times S^1 = K(\pi_1(X), 1)$ be the classifying map of the universal cover, so that we have a fibre sequence $\widetilde{X} \to X \to S^1 \times S^1$.  We obtain a section of  $k$ by first choosing a map $s\colon S^1 \vee S^1 \to X$ for which $k \circ s \sim J\colon S^1 \vee S^1 \to S^1 \times S^1$, where $J\colon S^1 \vee S^1 \to S^1 \times S^1$ is the inclusion, and noting that $s$ extends to a map $\sigma \colon S^1 \times S^1 \to X$, as $\pi_1(X)$ is abelian.  Then $k\circ \sigma     \colon S^1 \times S^1 \to S^1 \times S^1$ induces an isomorphism on $\pi_1(-)$ and thus, since $S^1\times S^1 = K(\pi_1(X), 1)$, is a homotopy equivalence.  We may adjust $\sigma$, if necessary by pre-composing with a suitable self-homotopy equivalence of $S^1 \times S^1$, into a section of $k$   up to homotopy.  The inequality now follows from \corref{cor: cat F B}. 
\end{proof}

Associated bundles provide another source of potential applications.  Suppose that $p\colon E \to M$ is a principal $G$-bundle, for $G$ a  topological group.  For $X$ a $G$-space, we have the associated bundle $E \times_G X \to M$, with fibre $X$.  
\begin{corollary}
With the above notation, if the action of $G$ on the connected space $X$ has a fixed point, then we have $\TC(E \times_G X) \geq \cat(X \times M)$. 
\end{corollary}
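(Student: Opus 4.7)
The plan is to deduce this directly from \corref{cor: cat F B} by exhibiting a section of the associated bundle. The fibration we want to feed into \corref{cor: cat F B} is precisely the associated bundle projection $q\colon E \times_G X \to M$, which has fibre $X$. So we must play the roles $F = X$, $B = M$, and the total space $E \times_G X$ in place of $E$, and then verify that $q$ admits a section.

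First I would construct the section. Let $x_0 \in X$ be a fixed point of the $G$-action, so that $g \cdot x_0 = x_0$ for every $g \in G$. Define $s \colon M \to E \times_G X$ on a point $m \in M = E/G$ by choosing a lift $e \in p^{-1}(m)$ and setting $s(m) = [e, x_0]$. This is well-defined since any other lift is of the form $e \cdot g$, and
\[
[e \cdot g, x_0] = [e, g \cdot x_0] = [e, x_0]
\]
by the fixed-point hypothesis. Continuity of $s$ follows by working locally over trivializing open sets for $p$, where $s$ is visibly continuous, and $q \circ s = 1_M$ holds by construction.

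Next I would apply \corref{cor: cat F B} with $n = 2$ to the fibration sequence $X \to E \times_G X \to M$, which is now sectioned, to conclude
\[
\cat(X \times M) \leq \TC(E \times_G X),
\]
as required. The only mild points to check are that the spaces in question are of the homotopy type of connected CW complexes, so that \corref{cor: cat F B} applies: connectedness of $X$ is part of the hypothesis, and we take $E, M$ to be CW (as is standard for principal bundles in this context).

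The main obstacle, such as it is, is verifying that the map $s$ is genuinely well-defined and continuous; the fixed-point hypothesis is exactly what is needed to make the assignment $m \mapsto [e, x_0]$ independent of the lift $e$. Once the section is in hand, the inequality is immediate from \corref{cor: cat F B}.
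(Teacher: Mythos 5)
Your proposal is correct and follows exactly the paper's argument: the fixed point $x_0$ yields a section $m \mapsto [e, x_0]$ of the associated bundle $E \times_G X \to M$, and then \corref{cor: cat F B} gives the inequality. The paper states this in one line; you have merely spelled out the well-definedness check for the section, which is fine.
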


\begin{proof}
If the action of $G$ on $X$ has a fixed point, then  $E \times_G X \to M$ has a section, and we may apply  
\corref{cor: cat F B}.
\end{proof}

\begin{examples}
Here, since the bundle $E \times_G X \to M$ admits a section, we already have the lower bound $\TC(E \times_G X) \geq \TC(M)$.  In this situation, therefore, our new lower bound will be useful when $\TC(M)$ is relatively small, compared with $\cat(M)$.  This happens if, for instance, we also suppose that $M$ is a group, in which case we have $\TC(M) = \cat(M)$ (see \cite[Lem.8.2]{Far04}, \cite{LuSc13}).  For example, take the principal circle bundle $U(n) \to PU(n)$, and let $X = S^2$ with the rotation action of the circle $S^1$.  Here we obtain a lower bound of $\TC(U(n) \times _{S^1} S^2) \geq \cat\big(S^2 \times PU(n)\big)$.  
As another example,  take the Hopf bundle $S^1 \to S^3 \to S^2$, and consider any $S^1$-action on $\C P^n$.  The action must have a fixed-point, as $\chi( \C P^n) \not=0$.  We obtain that $\TC( S^3 \times_{S^1} \C P^n) \geq \cat(\C P^n \times S^2) = n+1 > \cat(S^2)$. 
One advantage of our approach here is that it avoids (spectral sequence) cohomological calculations with the twisted product $E \times_G X$.   \end{examples}

Mapping tori are a further source of applications of \corref{cor: cat F B}.

\begin{corollary}
Suppose $\phi\colon M \to M$ is a diffeomorphism of a connected, smooth manifold $M$, and let $M_\phi$ denote the corresponding mapping torus.  Then we have  $\TC(M_\phi) \geq \cat(M \times S^1)$.  
\end{corollary}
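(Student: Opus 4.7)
The plan is to exhibit the natural fibration $M \to M_\phi \to S^1$ sending $[(x,t)] \mapsto e^{2\pi i t}$, verify that it admits a section, and then invoke \corref{cor: cat F B} with $n = 2$, $F = M$, $B = S^1$, and $E = M_\phi$ to conclude $\cat(M \times S^1) \leq \TC(M_\phi)$. The spaces involved are connected CW complexes (since $M$ is a smooth manifold, and $M_\phi$ is then also a manifold), so the hypotheses of \corref{cor: cat F B} will be satisfied.

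The only real content is producing the section, and the subtlety is that $\phi$ need not have a fixed point, so one cannot simply take $s(e^{2\pi i t}) = [(x_0, t)]$ for a chosen basepoint $x_0 \in M$. Instead, I would use the connectedness of $M$: choose $x_0 \in M$ and a path $\gamma \colon [0,1] \to M$ with $\gamma(0) = x_0$ and $\gamma(1) = \phi(x_0)$ (under the convention $M_\phi = M\times[0,1]/(x,0)\sim(\phi(x),1)$). Then define $\widetilde s \colon [0,1] \to M_\phi$ by $\widetilde s(t) = [(\gamma(t), t)]$. This is continuous, and
\[
\widetilde s(0) = [(x_0, 0)] = [(\phi(x_0), 1)] = \widetilde s(1),
\]
so $\widetilde s$ descends to a continuous map $s \colon S^1 \to M_\phi$. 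By construction the composite with the projection $\pi \colon M_\phi \to S^1$ is the identity of $S^1$, so $s$ is a genuine section (not merely up to homotopy).

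With the section in hand, \corref{cor: cat F B} immediately gives
\[
\cat(M \times S^1) \leq \TC(M_\phi),
\]
which is the claim. I do not anticipate any serious obstacle: the construction of the section is elementary once one recognizes that the connectedness of $M$ is enough to smooth out the lack of a fixed point of $\phi$.
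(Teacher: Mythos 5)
Your proof is correct and takes exactly the route the paper does: the paper's entire argument is the one-line observation that $M \to M_\phi \to S^1$ is a sectioned fibration to which \corref{cor: cat F B} applies. Your explicit construction of the section via a path from $x_0$ to $\phi(x_0)$ simply fills in the detail the paper leaves implicit, and it is correct.
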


\begin{proof}
\corref{cor: cat F B} obtains the lower bound, for we have a sectioned fibration $M \to M_\phi \to S^1$.  
\end{proof}

Using \corref{cor: cat F B}, we may also retrieve, and in a very transparent way, (part of) a result of Dranishnikov, as well as its extension to higher topological complexity.

\begin{corollary}\label{cor: TC X v Y}
Suppose $X$ and $Y$ have the homotopy type of connected CW complexes.  Then
$$\mathrm{max}\{ \cat(X \times Y^{n-1}), \cat(Y \times X^{n-1}) \} \leq  
\cat(X \times Y \times (X \vee Y)^{n-2}) \leq \TC_n(X \vee Y),$$
for $n \geq 2$.  In particular, we have $\cat(X \times Y) \leq \TC(X \vee Y)$. 
\end{corollary}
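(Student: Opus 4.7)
The plan is to apply Corollary~\ref{cor: cat F B} to the sectioned fibration arising from the collapse map $p_X\colon X\vee Y\to X$.  This map is retracted by the inclusion $i_X\colon X\hookrightarrow X\vee Y$, so writing $F$ for its homotopy fibre we obtain a fibration sequence $F\to X\vee Y\to X$ of connected CW complexes admitting a homotopy section, to which Corollary~\ref{cor: cat F B} applies to yield
\[\cat\bigl(F\times X\times (X\vee Y)^{n-2}\bigr)\leq \TC_n(X\vee Y).\]
The right-hand inequality of the statement will then follow once we replace $F$ by $Y$ on the left-hand side.

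The key step is therefore to exhibit $Y$ as a retract of $F$.  Since $p_X\circ i_Y$ is the constant map at $x_0$, the inclusion $i_Y\colon Y\hookrightarrow X\vee Y$ admits a canonical whisker lift $j\colon Y\to F$ given by $y\mapsto(i_Y(y),C_{x_0})$, where $C_{x_0}$ denotes the constant path at the basepoint.  For a retraction $r\colon F\to Y$ I would use the composition $F\to X\vee Y\xrightarrow{p_Y} Y$ of the fibre projection with the other wedge collapse; a direct computation shows $r\circ j(y)=p_Y(i_Y(y))=y$.  Retracts are preserved under products, so $Y\times X\times(X\vee Y)^{n-2}$ is a retract of $F\times X\times(X\vee Y)^{n-2}$, and the monotonicity of $\cat$ under retraction, together with its invariance under permutation of factors, yields
\[\cat\bigl(X\times Y\times(X\vee Y)^{n-2}\bigr)\leq\cat\bigl(F\times X\times(X\vee Y)^{n-2}\bigr)\leq\TC_n(X\vee Y).\]

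For the left-hand inequality of the statement, both $X$ and $Y$ are retracts of $X\vee Y$ via the projections $p_X,p_Y$, so $X^{n-2}$ and $Y^{n-2}$ are retracts of $(X\vee Y)^{n-2}$; hence $X\times Y^{n-1}$ and $Y\times X^{n-1}$ are, up to permutation of factors, retracts of $X\times Y\times(X\vee Y)^{n-2}$, and the claimed $\cat$ inequalities follow immediately.  The point requiring most care is the retraction $r\circ j=1_Y$: although the homotopy fibre $F$ is in general much larger than $Y$ (its higher homotopy groups typically contain Whitehead-product contributions absent from those of $Y$, so the naive hypotheses of Theorem~\ref{thm: cat Y_1 Y_2} applied to the pair of wedge inclusions fail), the required retraction is produced through a map routed through $X\vee Y$ itself, which bypasses that complication entirely.
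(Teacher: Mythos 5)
Your argument is correct and follows essentially the same route as the paper: apply Corollary~\ref{cor: cat F B} to the sectioned (up to homotopy) fibration obtained from the collapse $X\vee Y\to X$, exhibit $Y$ as a retract of the homotopy fibre via the whisker lift of $i_Y$ and the retraction $p_Y\circ(\text{fibre inclusion})$, and finish with retract monotonicity of $\cat$. The only cosmetic difference is that you write the lift and retraction explicitly where the paper invokes them up to homotopy; your closing observation about why Theorem~\ref{thm: cat Y_1 Y_2} cannot be applied directly to the two wedge inclusions is also exactly the reason the paper routes the proof through Corollary~\ref{cor: cat F B}.
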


\begin{proof}
Let $p_1\colon X \vee Y \to X$ be projection onto the first summand, $i_1 \colon X \to X \vee Y$ inclusion into the first summand, and $i_2 \colon Y \to X \vee Y$ inclusion into the second summand.  Because the composition $p_1\circ i_2 \colon Y \to X$ is null, there is a lift, up to homotopy, of $i_2$ through the (homotopy) fibre inclusion $j \colon F \to X \vee Y$.  Furthermore, because $i_2$ admits the retraction $p_2\colon X \vee Y \to Y$, this lift admits the retraction $p_2 \circ j \colon F \to Y$.  All this is summarized in the following diagram.
$$\xymatrix{ & F \ar[d]^{j}  \ar@/_1pc/[dl]_{p_2 \circ j}\\ 
Y \ar[ur] \ar[r]_-{i_2} & X \vee Y \ar[d]_{p_1}\\
 & X  \ar@/_1pc/[u]_{i_1} }$$
Now \corref{cor: cat F B} gives $\cat(F \times X \times (X \vee Y)^{n-2}) \leq \TC_n(X \vee Y)$.  But $Y$ is a retract of $F$, and hence $Y \times X \times (X \vee Y)^{n-2}$ is a retract of $F \times X \times (X \vee Y)^{n-2}$.  The second inequality follows.  The first follows because both $X^{n-2}$ and $Y^{n-2}$ are retracts of $(X \vee Y)^{n-2}$.     
\end{proof}

\section{Rational Results}\label{sec: rational}

We now turn our attention to the rational homotopy setting.  Here, we consider simply connected spaces.     If $X$ is a simply connected space, we denote by $X_\Q$ its rationalization.  Likewise, we denote by $f_\Q\colon X_\Q \to Y_\Q$ the rationalization of a map $f \colon X \to Y$ of simply connected spaces.  We abuse notation somewhat by using $X_\Q$, respectively, $f_\Q$, to denote a simply connected, rational space, respectively, map between such spaces,  regardless of whether we have a particular de-rationalization $X$, respectively, $f$, to hand.   The rational version of $\TC_n(X)$ that we consider will just be $\TC_n(X_\Q)$, the (higher) topological complexity of the rationalization of $X$.  Generally, this provides a lower bound for the (higher) topological complexity.   This fact is well-known, and easy to see, but we include a formal statement and proof here for the sake of completeness.  We may as well consider $P$-localization for this, as we use only generalities about localization.  The corresponding inequality for $\cat(-)$ is a result of Toomer \cite{Too75}.   

\begin{lemma}\label{lem: local TC}
Let $f \colon E \to B$ be a fibration of simply connected spaces, and $f_P\colon X_P$ $\to Y_P$  be its $P$-localization (at any set of primes $P$).  Then $\secat(f_P) \leq \secat(f)$.  In particular, we have $\TC_n(X_P) = \secat\big( (P_n)_P \big) \leq \secat(P_n) = \TC_n(X)$.
\end{lemma}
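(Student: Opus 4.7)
The plan is to invoke Schwarz's classical characterization of sectional category: for a fibration $p \colon E \to B$ of reasonable spaces, $\secat(p) \leq n$ if and only if the $(n+1)$-fold fiberwise join $p^{\ast(n+1)} \colon E \ast_B \cdots \ast_B E \to B$ admits a homotopy section. This reformulates the open-cover definition as a single existence-of-section problem, which behaves far more transparently under localization than open covers do.

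First, I would note that since $E$ and $B$ are simply connected, $P$-localization applied to the fibration $f$ yields (after fibrational replacement) a fibration sequence $F_P \to E_P \to B_P$; this is the standard exactness of localization on fibrations of simply connected spaces from Hilton--Mislin--Roitberg. Second, I would argue that $P$-localization commutes with the iterated fiberwise join construction: the fiberwise join is built from iterated homotopy pullbacks over $B$ and homotopy pushouts, and each of these is preserved by $P$-localization provided the inputs are simply connected (or more generally nilpotent). One checks by induction that the intermediate spaces in the join iteration remain simply connected, so this produces a canonical weak equivalence of fibrations over $B_P$:
$$(E \ast_B \cdots \ast_B E)_P \stab E_P \ast_{B_P} \cdots \ast_{B_P} E_P,$$
compatibly with projection to the base.

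Third, given a homotopy section $\sigma \colon B \to E \ast_B \cdots \ast_B E$ of $f^{\ast(n+1)}$ witnessing $\secat(f) \leq n$, its rationalization $\sigma_P$ composed with the above equivalence is a homotopy section of $(f_P)^{\ast(n+1)}$. Schwarz's criterion applied in the reverse direction then yields $\secat(f_P) \leq n$. Setting $n = \secat(f)$ gives the first inequality. The ``in particular'' assertion is obtained by specializing to $f = P_n \colon PX \to X^n$; since $P(X_P) \simeq (PX)_P$ for $X$ simply connected, the $P$-localized evaluation fibration is equivalent to the evaluation fibration $P_n$ of $X_P$, so $\TC_n(X_P) = \secat((P_n)_P) \leq \secat(P_n) = \TC_n(X)$.

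The main obstacle is justifying that the $(n+1)$-fold fiberwise join commutes with $P$-localization. This is a homotopy-colimit compatibility claim that is standard in the localization literature, but it requires some care: one must verify that every intermediate space in the inductive join construction stays within the regime (simply connected, or at worst nilpotent) in which localization interacts nicely with the requisite pushouts and pullbacks. Once that bookkeeping is done, the proof is formal.
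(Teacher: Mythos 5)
Your proposal is correct and follows essentially the same route as the paper: both arguments replace the open-cover definition by Schwarz's fibrewise-join criterion, observe that $P$-localization is compatible with the iterated join construction (the paper likewise flags this as the point needing care, citing Toomer for the path-fibration case), and then transport the global section of the $n$-fold join to the localized join, the paper doing so via the universal property of $l_B \colon B \to B_P$ where you localize the section directly --- the same argument in mildly different clothing. The concluding identification $P(X_P) \simeq (PX)_P$ for the ``in particular'' clause also matches the paper's.
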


\begin{proof}
We may use general properties of localization (see e.g.~\cite[II.1]{HMR75}), although not with the covering definition of $\secat(-)$ that we have given.  Instead, we use Svarc's fibrewise join definition of $\secat(-)$.  For this, one starts with the pullback along $f$ of $f$, and then forms the pushout of the top left corner to obtain the filler in the following diagram:
$$\xymatrix{ \mathrm{pull}\ar[rr] \ar[dd]& & E \ar[dd]^{f} \ar[ld]\\ 
 & \mathrm{push}=G_1(f) \ar@{-->}[rd]^{p_1} &  \\
E\ar[rr]_{f} \ar[ru]& & B }$$
Iterating this ``pullback-pushout" step $n$-times, each time starting with the pullback along $f$ of $p_{i-1}$, results in a fibration $p_n\colon G_n(f) \to Y$, called the $n$-fold fibrewise join (of $f$ with itself).  Under mild conditions (e.g.~$B$ paracompact, or normal), a result of Svarc says that $\secat(f)$ equals the smallest $n$ for which  $p_n\colon G_n(f) \to Y$ has a (global) section.  A proof of this identification is given in \cite[Th.2.2]{F-G-K-V06}.  

Now suppose that $\secat(f) = n$, and that $l_B \colon B \to B_P$ is a $P$-localization map.  The functorial nature of the fibrewise join construction results in a commutative diagram
$$\xymatrix{ G_n(f) \ar[r]^l \ar[d]_{p_n(f)}&  G_n(f_P)  \ar[d]^{p_n(f_P)} \\ 
B\ar[r]_{l_P} & B_P, }$$
 and it is not difficult to identify the right-hand vertical map as equivalent to $(p_n)_P\colon  (G_n(f))_P \to B_P$ (see \cite{Too75} for the case in which $f$ is the path fibration).  Since we assume that $\secat(f) = n$, the left-hand vertical map admits a section, and the universal property of the localization map $l_B \colon B \to B_P$ yields a section of 
 $p_n(f_P) \colon G_n(f_P)  \to B_P$.  That is, we have $\secat(f_P) \leq n$.
 
 The first equality of the last assertion follows from the fact that the induced map  $(l_P)_*\colon \map(I, X) \to \map(I, X_P)$ is a $P$-localization \cite[Th.II.3.11]{HMR75}.
\end{proof}

Note that the simply connected hypothesis is necessary in \lemref{lem: local TC}; the inequality may fail if the spaces are assumed only to be nilpotent.    For instance, if we take $X = S^1$, then we have $\TC(X) = \cat(X) = 1$.  However, $X_\Q = S^1_\Q$ is well-known to have $\cat(S^1_\Q) = 2$.  Since $S^1$ is an H-space, it follows that  $X_\Q = S^1_\Q$ is also an H-space.  Therefore, we have $\TC(X_\Q) = \cat(S^1_\Q) = 2$, and $\TC(X_\Q) > \TC(X)$ here.

We begin with a result in this setting whose conclusion is the same as that of \thmref{thm: cat Y_1 Y_2}, but whose hypotheses are weaker (the images of the maps in rational homotopy groups need not span).  As we shall see, this allows for greater flexibility in applying the result.

\begin{theorem}\label{thm: rational cat Y_1 Y_2}
Suppose given maps $f_{j\Q} \colon Y_{j\Q} \to X_\Q$ of simply connected, rational  spaces, $j = 1,2$, such that each $(f_{j\Q})_\# \colon \pi_*(Y_{j\Q}) \to \pi_*(X_\Q)$ is an inclusion on all (rational) homotopy groups, and the image subgroups satisfy
$$(f_{1\Q})_\# \big(\pi_i(Y_{1\Q})\big) \cap (f_{2\Q})_\# \big(\pi_i(Y_{2\Q})\big) = 0 \subseteq \pi_i(X_\Q),$$
for each $i \geq 2$.  Then for $n \geq 2$, we have
$$ \cat(Y_{1\Q}) +  \cat(Y_{2\Q}) + (n-2)\,\cat(X_\Q)  \leq \TC_n(X_\Q).$$
In particular, with $n = 2$, we have 
$ \cat(Y_{1\Q}) +  \cat(Y_{2\Q}) \leq \TC(X_\Q).$
\end{theorem}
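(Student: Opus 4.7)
The plan is to mirror the proof of \thmref{thm: cat Y_1 Y_2}, replacing the contractibility step with a rational generalization of the F\'elix-Halperin mapping theorem. First, form the same pullback
$$\xymatrix{N(f,P_n) \ar[r] \ar[d]_{\overline{P_n}} & PX_\Q \ar[d]^{P_n} \\ Y \ar[r]_{f} & X_\Q^n}$$
with $Y = Y_{1\Q} \times Y_{2\Q} \times X_\Q^{n-2}$ and $f = f_{1\Q} \times f_{2\Q} \times 1_{X_\Q^{n-2}}$, so that $\secat(\overline{P_n}) \leq \secat(P_n) = \TC_n(X_\Q)$ by the pullback property of sectional category.

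Next, apply \propref{prop: connecting hom} to identify the connecting map $\delta_\#$ of this fibration on rational homotopy. Under the present weaker rational hypotheses, the injectivity half of the computation from \thmref{thm: cat Y_1 Y_2}'s proof still goes through unchanged, giving that $\delta_\#$ is injective on every rational homotopy group; the surjectivity half, which required the full spanning hypothesis, no longer applies. Consequently $N(f,P_n)$ is not rationally contractible---its rational homotopy groups are captured by the cokernel of $\delta_\#$---and this is why the integral argument cannot simply be rationalized.

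The heart of the argument is then the inequality $\cat(Y_\Q) \leq \secat(\overline{P_n})$, which is a direct generalization of the F\'elix-Halperin mapping theorem from $\cat$ to $\secat$. The rational injectivity of $\delta_\#$ plays the role that injectivity of $f_{\Q\#}$ plays in the classical mapping theorem, and it provides precisely the structural input needed in the relative Sullivan minimal model of $\overline{P_n}$ to run a minimal-model argument paralleling the classical proof---extended from the path-loop fibration (as in F\'elix-Halperin's argument) to an arbitrary pullback fibration. This generalized mapping inequality is recorded separately as \corref{cor: mapping th}, and supplying its proof is the essential new rational input the paper contributes.

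To finish, combine the inequality $\cat(Y_\Q) \leq \secat(\overline{P_n}) \leq \TC_n(X_\Q)$ with the product-additivity of rational Lusternik-Schnirelmann category for simply connected rational spaces of finite type,
$$\cat(Y_\Q) = \cat(Y_{1\Q}) + \cat(Y_{2\Q}) + (n-2)\cat(X_\Q),$$
which is classical in rational homotopy theory. The main obstacle is the generalized-mapping-theorem step: extending F\'elix-Halperin's minimal-model argument from $\cat$ to $\secat$ under only the rational injectivity of $\delta_\#$, with no spanning hypothesis to fall back on.
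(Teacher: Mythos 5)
Your architecture matches the paper's: same pullback $N(f_\Q,P_{n\Q})$, same identification of $\delta_\#$ as injective (but no longer surjective) on rational homotopy, same final appeal to the rational product formula $\cat(Y_\Q)=\cat(Y_{1\Q})+\cat(Y_{2\Q})+(n-2)\cat(X_\Q)$. But the decisive step --- the inequality $\cat(Y_\Q)\leq\secat(\overline{P_{n\Q}})$ --- is exactly the point you leave unproved, describing it as ``the main obstacle'' and gesturing at a relative Sullivan model argument without carrying it out. As it stands this is a genuine gap: the whole theorem reduces to that one inequality, and asserting that a minimal-model argument ``paralleling the classical proof'' would supply it is not a proof.

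The paper closes this gap without minimal models, by a short homotopy-theoretic argument that you should be able to reconstruct. Injectivity of $\delta_\#$ on all rational homotopy groups implies, via the long exact sequence of the fibration $\Omega X_\Q^{n-1}\to N(f_\Q,P_{n\Q})\to Y_\Q$, that the fibre inclusion $j_\Q\colon\Omega X_\Q^{n-1}\to N(f_\Q,P_{n\Q})$ is \emph{surjective} on rational homotopy groups. Since its domain is a rational H-space, a surjection on rational homotopy out of a rational H-space admits a section $\sigma$ (this is the same device used in the proof of the mapping theorem in \cite[Th.4.11]{CLOT03}). Then $\overline{P_{n\Q}}=\overline{P_{n\Q}}\circ j_\Q\circ\sigma=*\circ\sigma$ is nullhomotopic, so $\secat(\overline{P_{n\Q}})=\cat(Y_\Q)$ exactly, and the pullback property gives $\cat(Y_\Q)\leq\secat(P_{n\Q})=\TC_n(X_\Q)$. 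Note also a small misattribution in your sketch: it is not the injectivity of $\delta_\#$ that directly ``plays the role'' of injectivity of $f_{\Q\#}$ in F\'elix--Halperin; rather, injectivity of $\delta_\#$ is converted into surjectivity of $j_{\Q\#}$, and it is that surjectivity, combined with the H-space structure of the fibre, that does the work.
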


\begin{proof}
We follow the same steps as in the proof of \thmref{thm: cat Y_1 Y_2}.  Write $Y_\Q = Y_{1\Q} \times Y_{2\Q}\times X_\Q^{n-2}$, and $f_\Q = f_{1\Q} \times f_{2\Q} \times 1_{X_\Q^{n-2}} \colon Y_\Q \to X_\Q^n$.  Then let $N(f_\Q,P_{n\Q})$ denote the pullback along $f_\Q$ of $P_{n\Q}\colon PX_\Q \to X_\Q^n$,  
and construct the same ladder of fibrations as in the proof of \thmref{thm: cat Y_1 Y_2}, thus:
$$\xymatrix{\Omega Y_\Q\ar[r]^{\Omega f_\Q} \ar[d]_{\delta} & \Omega X_\Q^n \ar[d]^{\partial} \\
 \Omega X^{n-1}_\Q \ar@{=}[r] \ar[d]_{j_\Q} &  \Omega X^{n-1}_\Q \ar[d]\\
 N(f_\Q,P_{n\Q})\ar[r] \ar[d]_{\overline{P_{n\Q}}} & PX_\Q \ar[d]^{P_{n\Q}} \\
Y_\Q \ar[r]_{f_\Q} & X_\Q^n.}$$
Notice that, here, we are making various natural identifications, including  $(X^n)_\Q = (X_\Q)^n$, $(\Omega X)_\Q = (\Omega X)_\Q$, and $(PX)_\Q = P(X_\Q)$---this latter already remarked upon at the end of the proof of \lemref{lem: local TC}.  Exactly as in the proof of \thmref{thm: cat Y_1 Y_2}, in the long exact homotopy sequence of the left-hand vertical map, we now identify the connecting homomorphism (now in rational homotopy groups) $\delta_\#\colon \pi_i(\Omega Y_\Q) \to \pi_{i}(\Omega X_\Q^{n-1})$, for $i \geq 1$, as an \emph{injective} homomorphism
$$\delta_\# = \partial_\#\circ f_{\Q\#}\colon \pi_{i+1}(Y_{1\Q}) \times \pi_{i+1}(Y_{2\Q}) \times \prod_{j=3}^{n} \pi_{i+1}(X_\Q) \to \prod_{j=1}^{n-1} \pi_{i+1}(X_\Q).$$
From the long exact sequence in rational homotopy groups, this implies that the fibre inclusion 
$$j_\Q\colon  \Omega X^{n-1}_\Q \to 
 N(f_\Q,P_{n\Q})$$
 is onto in rational homotopy groups. Now we rely upon a fact peculiar to the rational setting:  because the domain of $j_\Q$ is a (rational)  H-space, and $j_\Q$  is onto in rational homotopy groups, it follows that we have a (rational) section $\sigma \colon N(f_\Q,P_{n\Q}) \to  \Omega X^{n-1}_\Q$ of $j_\Q$.  Indeed, it is also true that $N(f_\Q,P_{n\Q})$ is a (rational) H-space: see the proof of the mapping theorem given in \cite[Th.4.11]{CLOT03}. Then it follows that we have $\overline{P_{n\Q}} = \overline{P_{n\Q}}\circ j_\Q\circ \sigma = * \circ \sigma = *\colon N(f_\Q,P_{n\Q}) \to Y_\Q$.   Consequently, from the bottom pullback square, we have
 $$\cat(Y_\Q) = \secat( \overline{P_{n\Q}}) \leq \TC_n(X_\Q).$$
 Finally, we may rewrite $\cat(Y_\Q)$ as the sum $ \cat(Y_{1\Q}) +  \cat(Y_{2\Q}) + (n-2)\,\cat(X_\Q)$, because the product equality for L-S category holds rationally \cite{F-H-L98}.  
\end{proof}

We may draw the same corollary from this proof as we did from that of  \thmref{thm: cat Y_1 Y_2}.

\begin{corollary}[to the proof of \thmref{thm: rational cat Y_1 Y_2}]\label{cor: mapping th} 
For maps $f_{j\Q} \colon Y_{j\Q} \to X_\Q$  that satisfy the hypotheses of  \thmref{thm: rational cat Y_1 Y_2}, we have
$\cat(Y_{1\Q}) \leq \secat(f_{2\Q})$. 
\end{corollary}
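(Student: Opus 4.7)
The plan is to mirror the proof of \corref{cor: cat secat} (the integral analog of this corollary), replacing the contractibility argument used there with the nullhomotopy of the relevant pullback map that is established inside the proof of \thmref{thm: rational cat Y_1 Y_2}.

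First I would form the standard homotopy pullback
$$M(f_{1\Q}, f_{2\Q}) = \{ (y_1, \alpha, y_2) \in Y_{1\Q} \times \map(I, X_\Q) \times Y_{2\Q} \mid \alpha(0) = f_{1\Q}(y_1),\ \alpha(1) = f_{2\Q}(y_2) \}$$
equipped with its two projections $\overline{f_{2\Q}} \colon M(f_{1\Q}, f_{2\Q}) \to Y_{1\Q}$ and $\overline{f_{1\Q}} \colon M(f_{1\Q}, f_{2\Q}) \to Y_{2\Q}$. Exactly as in the proof of \corref{cor: cat secat}, this homotopy pullback is canonically identified with $N(f_\Q, P_{2\Q})$, the pullback of the endpoints-evaluation fibration $P_{2\Q}\colon PX_\Q \to X_\Q \times X_\Q$ along $f_\Q = f_{1\Q} \times f_{2\Q}$, and $\overline{f_{2\Q}}$ agrees with the composite of $\overline{P_{2\Q}}\colon N(f_\Q, P_{2\Q}) \to Y_{1\Q} \times Y_{2\Q}$ with the projection onto the first factor.

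The key input is then the conclusion, extracted from the proof of \thmref{thm: rational cat Y_1 Y_2} with $n=2$, that $\overline{P_{2\Q}}$ is nulhomotopic. Recall that this was obtained by observing that the connecting map $\delta_\#$ is injective on rational homotopy groups, hence the fibre inclusion $j_\Q \colon \Omega X_\Q \to N(f_\Q, P_{2\Q})$ is surjective on rational homotopy groups; and because both source and target are (rational) H-spaces, $j_\Q$ admits a homotopy right inverse $\sigma$, giving $\overline{P_{2\Q}} \simeq \overline{P_{2\Q}} \circ j_\Q \circ \sigma \simeq *$. Composing with the projection to $Y_{1\Q}$ shows $\overline{f_{2\Q}}$ itself is nulhomotopic, and therefore $\secat(\overline{f_{2\Q}}) = \cat(Y_{1\Q})$. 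Since $\overline{f_{2\Q}}$ is a pullback of $f_{2\Q}$, we have $\secat(\overline{f_{2\Q}}) \leq \secat(f_{2\Q})$, and combining the two yields $\cat(Y_{1\Q}) \leq \secat(f_{2\Q})$, as required.

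No serious obstacle arises here: this really is a corollary to the preceding proof rather than an independent argument. The only small point worth flagging is that one must apply the hypothesis on $f_{1\Q}$ and $f_{2\Q}$ symmetrically---the same argument with the roles of the two maps swapped yields $\cat(Y_{2\Q}) \leq \secat(f_{1\Q})$---and that the identification of $M(f_{1\Q}, f_{2\Q})$ with $N(f_\Q, P_{2\Q})$ is the same formal manipulation used in the integral case, so no new verification is needed.
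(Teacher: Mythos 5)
Your argument is correct and is essentially identical to the paper's own proof of \corref{cor: mapping th}: both identify the homotopy pullback $M(f_{1\Q},f_{2\Q})$ with $N(f_\Q,P_{2\Q})$, import the nullhomotopy of $\overline{P_{2\Q}}$ established in the proof of \thmref{thm: rational cat Y_1 Y_2} (with $n=2$) to conclude that $\overline{f_{2\Q}}$ is nulhomotopic, and then combine $\cat(Y_{1\Q}) = \secat(\overline{f_{2\Q}})$ with the pullback inequality $\secat(\overline{f_{2\Q}}) \leq \secat(f_{2\Q})$. No gaps.
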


\begin{proof}
Use the argument of \corref{cor: cat secat} with the 
 homotopy pullback 
$$\xymatrix{ M(f_{1\Q},f_{2\Q})\ar[r]^-{\overline{f_{1\Q}}} \ar[d]_{\overline{f_{2\Q}}} & Y_{2\Q} \ar[d]^{f_{2\Q}} \\
Y_{1\Q} \ar[r]_{f_{1\Q}} & X_\Q.}$$
Note that $\overline{f_{2\Q}} \colon M(f_{1\Q},f_{2\Q}) \to
Y_{1\Q}$  factors through  $\overline{P_{2\Q}} \colon N(f_{\Q},P_{2\Q}) \to
Y_{\Q}$, where $N(f_{\Q},P_{2\Q})$ denotes the pullback of $P_{2\Q} \colon X_\Q \to X_\Q \times X_\Q$ along $f_\Q = f_{1\Q}\times f_{2\Q} \colon Y_{1\Q} \times Y_{2\Q} \to X_\Q \times X_\Q$.  In the proof of \thmref{thm: rational cat Y_1 Y_2}, we we showed $\overline{P_{2\Q}} $ to be (rationally) nulhomotopic, and hence $\overline{f_{2\Q}} $ is nulhomotopic.  The result follows.
\end{proof}

We continue with several illustrations of how \thmref{thm: rational cat Y_1 Y_2} may be applied.
We say that a fibration $p \colon E \to B$ of simply connected spaces admits a \emph{rational section} if the rationalization $p_\Q\colon E_\Q \to B_\Q$ admits a section.  In this case, we may apply   
\corref{cor: cat F B} to the rationalized fibration sequence, and conclude that $\cat(F_\Q) + \cat(B_\Q) \leq \TC(E_\Q)$.  Because of the relaxed hypotheses of \thmref{thm: rational cat Y_1 Y_2}, however, we are able to obtain the following somewhat more general result.

\begin{corollary}\label{cor: rational sectioned fibration}
Suppose that  $i\colon Y \to X$ is a map of simply connected spaces that induces an injection on rational homotopy groups, and $F \to Y \to B$ is a fibration of $Y$ by simply connected spaces that admits a rational section.  Then we have
$$\cat(F_\Q) + \cat(B_\Q) + (n-2)\,\cat(X_\Q)  \leq \TC_n(X_\Q).$$
\end{corollary}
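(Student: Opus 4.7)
The plan is to apply Theorem~\ref{thm: rational cat Y_1 Y_2} directly, with $Y_{1\Q} = F_\Q$ and $Y_{2\Q} = B_\Q$. The two maps into $X_\Q$ should be the composites
\[
f_{1\Q} \colon F_\Q \xrightarrow{j_\Q} Y_\Q \xrightarrow{i_\Q} X_\Q,
\qquad
f_{2\Q} \colon B_\Q \xrightarrow{s} Y_\Q \xrightarrow{i_\Q} X_\Q,
\]
where $j\colon F\to Y$ is the fibre inclusion and $s\colon B_\Q \to Y_\Q$ is the hypothesized rational section of the fibration $Y\to B$. Once we verify the two hypotheses of Theorem~\ref{thm: rational cat Y_1 Y_2}, namely that each $f_{j\Q\#}$ is injective on rational homotopy groups and that the images meet only in $0$, the conclusion will read
\[
\cat(F_\Q) + \cat(B_\Q) + (n-2)\,\cat(X_\Q) \leq \TC_n(X_\Q),
\]
which is exactly what is claimed.

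First I would record that, because $s$ is a rational section of $Y_\Q\to B_\Q$, the long exact rational homotopy sequence of $F_\Q \to Y_\Q \to B_\Q$ degenerates into a short exact sequence
\[
0 \to \pi_*(F_\Q) \xrightarrow{j_\#} \pi_*(Y_\Q) \to \pi_*(B_\Q) \to 0
\]
that is split (by $s_\#$). Consequently $j_\#$ and $s_\#$ are each injective and their images have trivial intersection in $\pi_*(Y_\Q)$. Composing with $i_\Q$, which by hypothesis is injective on rational homotopy groups, preserves injectivity, so $f_{1\Q\#}$ and $f_{2\Q\#}$ are both injective.

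For the trivial intersection of the images in $\pi_*(X_\Q)$, I would argue as follows. Suppose $a\in \pi_*(F_\Q)$ and $b\in \pi_*(B_\Q)$ satisfy $f_{1\Q\#}(a) = f_{2\Q\#}(b)$, i.e.\ $i_\#\bigl(j_\#(a)\bigr) = i_\#\bigl(s_\#(b)\bigr)$. Injectivity of $i_\#$ forces $j_\#(a) = s_\#(b)$ inside $\pi_*(Y_\Q)$, and the direct-sum decomposition of the previous paragraph then yields $j_\#(a) = 0 = s_\#(b)$. Injectivity of $j_\#$ and $s_\#$ finally gives $a = 0$ and $b = 0$, as required.

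There is no real obstacle here: the argument is a bookkeeping exercise showing that the section hypothesis and the injectivity hypothesis together manufacture exactly the two complementary inclusions that Theorem~\ref{thm: rational cat Y_1 Y_2} demands. If anything, the one point worth flagging is that the section is only assumed to exist rationally (so $s$ may not come from an integral map $B\to Y$), but since Theorem~\ref{thm: rational cat Y_1 Y_2} is stated entirely in the rational category this causes no difficulty.
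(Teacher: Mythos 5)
Your proof is correct and takes essentially the same route as the paper's: both apply Theorem~\ref{thm: rational cat Y_1 Y_2} to the composites $i_\Q\circ j_\Q$ and $i_\Q\circ\sigma$, using the splitting of the rational homotopy long exact sequence (giving the direct-sum decomposition of $\pi_*(Y_\Q)$) together with the injectivity of $i_{\Q\#}$ to verify the two hypotheses. The verification of trivial intersection of images is the identical bookkeeping argument that appears in the paper.
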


\begin{proof}
Suppose the maps involved in the fibration of $Y$, after rationalization, are
$$\xymatrix{ F_\Q \ar[r]^{j} & Y_\Q \ar[r]_{p} & B_\Q \ar@/_1pc/[l]_{\sigma},}$$
with $p\circ\sigma = 1\colon B_\Q \to B_\Q$.  Then $i\circ j \colon F_\Q \to X_\Q$ and $i\circ \sigma \colon B_\Q \to X_\Q$ satisfy the hypotheses of \thmref{thm: rational cat Y_1 Y_2}: they both induce injections in rational homotopy groups; and it is easy to see that their image subgroups have trivial intersection in $\pi_*(X_\Q)$.  Indeed, we have a direct sum decomposition $\pi_*(Y_\Q) \cong  j_\#\big(\pi_*(F_\Q)\big) \oplus \sigma_\#\big(\pi_*(B_\Q)\big)$.  Suppose that we have $a \in \pi_*(F_\Q)$,   $b \in \pi_*(B_\Q)$ such that    $(i\circ j)_\#(a) =  (i\circ \sigma)_\#(b) \in \pi_*(X_\Q)$.  Then  $i_\#\big(  j_\#(a)- \sigma_\#(b)\big) = 0$. Hence $j_\#(a)-  \sigma_\#(b) = 0 \in \pi_*(Y_\Q)$, as $i$ is injective in rational homotopy, and thus we have $a =0 \in \pi_*(F_\Q)$ and  $b =0 \in \pi_*(B_\Q)$, since
$j_\#\big(\pi_*(F_\Q)\big) \cap \sigma_\#\big(\pi_*(B_\Q)\big)= 0 \in \pi_*(Y_\Q)$, and both $j$ and $\sigma$ are injective in rational homotopy.
The result follows.
\end{proof}

\begin{remark}
Suppose we have a (rationally) sectioned fibration $F \to E \to B$, to which we may apply \corref{cor: cat F B} or \corref{cor: rational sectioned fibration}.  Since 
 $F \to E$  induces an injection on rational homotopy groups, and since $B_\Q$ is dominated by $E_\Q$,  we have
$$\cat(F_\Q) \leq \cat(E_\Q) \leq \TC(E_\Q) \quad \textrm{and} \quad \TC(B_\Q) \leq \TC(E_\Q),$$
with the first inequality following from the mapping theorem (\thmref{introthm: mapping th}).  Thus $\cat(F_\Q)$ and $\TC(B_\Q)$ individually give lower bounds for $\TC(E_\Q)$.  It is clear, therefore, that any gain from using this theorem will come in situations in which we have a relatively large value for the sum $\cat(F_\Q) + \cat(B_\Q)$, when compared with either $\cat(E_\Q)$ or $\TC(B_\Q)$.
\end{remark}

In the next example, and at several places in the sequel, we make use of minimal models.  These are a basic tool of rational homotopy theory that allows one to work in an entirely algebraic setting. 
We refer to \cite[Sec.12 \emph{et seq.}]{F-H-T} and \cite[Ch.2]{F-O-T08} for comprehensive treatments of minimal models.  Here, we simply recall that the minimal model of a space $X$ is a differential graded (DG) algebra, $\land(V;d)$, where $\land V$ denotes the free graded commutative algebra generated by the graded vector space $V$, and $d$ denotes a decomposable differential, that is, we have $d \colon V \to \land^{\geq 2} V$.    The minimal model encodes all of the rational homotopy information of $X$, in a certain technical sense.  Some rational homotopy data are readily extracted from it.  For example, on passing to cohomology we obtain $H(\land(V;d)) \cong H^*(X;\Q)$, and as graded vector spaces we have isomorphisms $\Hom(V,\Q) \cong \pi_*(X)\otimes\Q$.    As another example, Whitehead product structure in  $\pi_*(X)\otimes\Q$ corresponds to the quadratic part of the differential $d$ (see \cite[Sec.13 (e)]{F-H-T} for details).

A number of authors have used algebraic or minimal model versions of $\TC(-)$ to study rationalized topological complexity (e.g.~\cite{F-G-K-V06, JMP12}).  Whilst we use  minimal models  in some of our examples and applications,  we do not make use of an algebraic, or minimal model, version of $\TC_n(-)$ as such. 
The following example illustrates how, in certain instances, our 
classical approach to rational $\TC(-)$ might serve to replace the more 
technical minimal model approach.  In the example, $\MTC(X)$ is the so-called \emph{module topological complexity},  an algebraic invariant defined in terms of the minimal model.  This invariant can sometimes give a greater lower bound for (rational) $\TC(X)$ than  the rational zero-divisors lower bound which uses the rational cohomology algebra.   

\begin{example}\label{ex:MTC example}
We analyze an example from \cite{F-G-K-V06}.  Let $X = S^3 \vee S^3 \cup_a e^8 \cup_b e^8$, with the $8$-cells attached by Whitehead products $a = [\iota_1, [\iota_1, \iota_2]]$ and $b = [\iota_2, [\iota_1, \iota_2]]$.  In \cite[Ex.6.5]{F-G-K-V06}, it is shown that  $\MTC(X) = 3$, which gives a better lower bound for (rational) $\TC(X)$ than does the rational zero-divisors lower bound (which is $2$, here).      We will use \thmref{thm: rational cat Y_1 Y_2} to match this lower bound given by $\MTC(X)$.

We construct maps $f_j \colon Y_j \to X$ with the salient properties.  We may describe (the first few terms of) the minimal model of $X$ as follows:
$$\land( u_3, v_3, w_5, x_{10}, \ldots)$$
with subscripts denoting degrees, and differentials $d(u) = 0 = d(v)$, $d(w) = uv$, $d(x) = uvw$, and so-on.  Notice that the cycles $uw$ and $vw$ generate cohomology in degree $8$, and that $X$ is not formal.   Now we have an obvious projection of the minimal model
$$\land( u_3, v_3, w_5, x_{10}, \ldots) \to \land(u, w; \bar{d} = 0)$$
that corresponds to a map $f_1\colon Y_1 \to X$ injective in rational homotopy.  Indeed, $Y_1 \simeq_\Q S^3 \times S^5$.   On the other hand, we also have a projection
$$\land( u_3, v_3, w_5, x_{10}, \ldots) \to \land(v, x, \ldots).$$
Here, the quotient differential will be zero for the first terms, but generally not zero.  This likewise corresponds to a map $f_2\colon Y_2 \to X$ injective in rational homotopy.  Note that  $Y_2$ fibres rationally over $S^3$ with fibre  $X^{[9]}$, the $9$-connective cover of $X$.  The images  $(f_j)_\#\big(\pi_*(Y_{j\Q}) \big)$ in $\pi_*(X_{\Q})$ are distinct.   Indeed, the only degree in which they could possibly intersect non-trivially is in degree $3$, and here the images are distinct by choice. 
Applying \thmref{thm: rational cat Y_1 Y_2},  we obtain that
$$\TC(X_\Q) \geq \cat(Y_\Q) = \cat(Y_{1\Q}) + \cat(Y_{2\Q}) \geq  2 + 1 = 3,$$
since $Y_2$ is not rationally contractible.  If it were possible to show that $Y_2$ is not a rational co-H space, then we could improve this lower bound to  $\TC(X_\Q) \geq 4$.  As it is,  we have already matched the lower bound on $\TC(X)$ obtained in \cite{F-G-K-V06} using $\MTC(X)$ .
\end{example}

Connective covers  provide one source of spaces $Y_j$ for \thmref{thm: rational cat Y_1 Y_2}.  Recall our notation for the connective cover: $X^{[N]}$ is $N$-connected.  Now  \emph{rationally}, from the mapping theorem (\thmref{introthm: mapping th}), we have $\cat(X^{[N]}_\Q) \leq \cat(X_\Q)$ for any $N \geq 2$.   We illustrate the use of connective covers in this setting  with a proof of a rational version of the main result of \cite{GLO13}.

\begin{corollary}\label{cor: rational TC = 1}
Let $X$ be a  simply connected space with $\TC_n(X_\Q) = n-1$, some $n \geq 2$.  Then $X \simeq_\Q S^{2r+1}$ for some $r \geq 1$.
\end{corollary}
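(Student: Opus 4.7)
The plan is to first show that $\cat(X_\Q) = 1$, which forces $X_\Q$ to be rationally a wedge of spheres, and then to use \thmref{thm: rational cat Y_1 Y_2} twice to show this wedge must in fact be a single odd-dimensional sphere.

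The first step is to combine the standard inequality $\cat(X_\Q^{n-1}) \leq \TC_n(X_\Q)$ with the product formula for rational L-S category, $\cat(X_\Q^{n-1}) = (n-1)\cat(X_\Q)$, of F{\'e}lix-Halperin-Lemaire \cite{F-H-L98}. This yields $(n-1)\cat(X_\Q) \leq n - 1$ and so $\cat(X_\Q) \leq 1$. Since $\TC_n(X_\Q) = n - 1 \geq 1$ the space $X_\Q$ cannot be contractible, so $\cat(X_\Q) = 1$. Invoking the classification of simply connected rational co-H-spaces, $X_\Q$ then has the rational homotopy type of a wedge of rational spheres $\bigvee_\alpha S^{n_\alpha}_\Q$, and by Hilton's theorem (rationally), $\pi_*(\Omega X)\otimes\Q$ is the free graded Lie algebra on generators $x_\alpha$ in degrees $n_\alpha - 1$.

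Next, suppose for contradiction that the wedge has at least two summands, $S^{n_1}$ and $S^{n_2}$, and let $f_j\colon S^{n_j} \to X$ be the corresponding inclusions. The sub-Lie-algebras generated by $x_1$ and by $x_2$ in the free graded Lie algebra $\pi_*(\Omega X)\otimes\Q$ meet only at $0$, so the maps $(f_{j\Q})_\#$ are injective on rational homotopy and their images intersect trivially in each degree. The hypotheses of \thmref{thm: rational cat Y_1 Y_2} are therefore satisfied, and the theorem gives
$$n-1 = \TC_n(X_\Q) \geq \cat(S^{n_1}_\Q) + \cat(S^{n_2}_\Q) + (n-2)\cat(X_\Q) = 1 + 1 + (n-2) = n,$$
a contradiction. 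Hence $X_\Q \simeq S^m_\Q$ for a single integer $m \geq 2$.

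To rule out $m = 2r$ even, recall that $\pi_{4r-1}(S^{2r})\otimes\Q$ is one-dimensional, generated by the Whitehead square $[\iota,\iota]$. Take $f_1 = 1_{S^{2r}}$ and let $f_2\colon S^{4r-1} \to S^{2r}$ represent $[\iota,\iota]$; both rationalize to injections on $\pi_*$, and the images lie in distinct degrees ($2r$ and $4r-1$), so intersect trivially. A second application of \thmref{thm: rational cat Y_1 Y_2} yields the same contradiction $n - 1 \geq n$. Thus $m = 2r+1$ is odd, and simple connectivity forces $r \geq 1$. The main obstacle is the appeal, in the first paragraph, to the classification of simply connected rational co-H-spaces as rational wedges of spheres; once this structural fact and the F{\'e}lix-Halperin-Lemaire product formula are in hand, the remaining two steps reduce to choosing the right pair of auxiliary maps and reading off the conclusion from \thmref{thm: rational cat Y_1 Y_2}.
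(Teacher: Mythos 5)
Your first two steps are correct and take a genuinely different route from the paper: you deduce $\cat(X_\Q)=1$ from the rational product formula, invoke the classification of simply connected rational co-H-spaces as wedges of rational spheres, and then use \thmref{thm: rational cat Y_1 Y_2} (in effect \corref{cor: rational wedge}) to exclude wedges with more than one summand. The paper instead locates the lowest odd-degree homotopy group, fibres the minimal model of the connective cover $X^{[2r]}$ over $S^{2r+1}_\Q$ with a section, and applies \corref{cor: rational sectioned fibration}; your route trades that minimal-model construction for the structure theorem on rational co-H-spaces.

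Your third step, however---ruling out $X_\Q\simeq S^{2r}_\Q$---has a genuine gap. With $f_1=1_{S^{2r}}$, the image of $(f_{1\Q})_\#$ is \emph{all} of $\pi_*(S^{2r}_\Q)$, which is non-zero in two degrees, $2r$ and $4r-1$; its degree-$(4r-1)$ part is the line spanned by $[\iota,\iota]$, which is precisely the image of $(f_{2\Q})_\#$. So the images do not lie in distinct degrees, they intersect in a one-dimensional subspace of $\pi_{4r-1}(S^{2r}_\Q)$, and hypothesis (II) of \thmref{thm: rational cat Y_1 Y_2} fails. Moreover, no choice of $f_1,f_2$ can repair this: since $\pi_*(S^{2r}_\Q)$ is two-dimensional in total, and any $Y_{1\Q}$ whose image contains $\iota$ must, by naturality of Whitehead products and injectivity, also have image containing $[\iota,\iota]$, one of the two spaces is forced to be rationally contractible and the theorem yields at most $1+0+(n-2)=n-1$, which is no contradiction. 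The even sphere has to be excluded cohomologically, as the paper does for the truncated polynomial algebra case: $H^*(S^{2r};\Q)\cong\Q[u]/(u^2)$ and the zero-divisors cup-length bound gives $\TC_n(S^{2r}_\Q)\ge n$ (for $n=2$, already $(u\otimes 1-1\otimes u)^2=-2\,u\otimes u\ne 0$). With that replacement your argument is complete.
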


\begin{proof}
First, $X$ must have at least one non-zero odd-degree rational homotopy group.  For otherwise,  $X$ is rationally equivalent to a product of even-dimensional Eilenberg-Mac Lane spaces, and we have $\infty = \cat(X_\Q) = \TC(X_\Q)$.  So suppose that $\pi_{2r+1}(X_\Q) \not= 0$, for some $r \geq 1$, and that all odd-degree rational homotopy groups of $X$ below this degree are zero.  Then we may fibre the minimal model of $X^{[2r]}$ as
$$\xymatrix{ \land(v) \ar[r] & (\land(v) \otimes \land(W), d) \ar@/_1pc/[l] \ar[r] & (\land(W), \bar{d}) },$$
with $v$ a  generator in degree $2r+1$.  Topologically, this corresponds to a fibration with section
$$\xymatrix{ F_\Q \ar[r]^{j} & X^{[2r]}_\Q \ar[r]_{p} & S^{2r+1}_\Q \ar@/_1pc/[l]_{\sigma}.}$$
Note that, since $(n-1) \cat(X_\Q) \leq \TC_n(X_\Q)$, our hypothesis implies that $\cat(X_\Q) = 1$, so  that $X$ is a rational co-H space.   Note also that we have
$\cat(S^{2r+1}_\Q) = 1$.
Now apply \corref{cor: rational sectioned fibration}, and obtain that $\cat(F_\Q) + 1 + (n-2) \leq \TC(X_\Q) = n-1$, whence $\cat(F_\Q) = 0$, and so we have $X^{[2r]} \simeq_\Q S^{2r+1}$.

There are two possibilities that remain.  Either we have all rational homotopy groups of degree below $2r+1$ zero, in which case we have $X \simeq_\Q S^{2r+1}$. Or, if there are  non-zero rational homotopy groups of degree less than $2r+1$,  the minimal model of $X$ must be of the form $\land(u_1, \dots, u_k, v; d)$, with each $u_i$ of even degree (recall that we assumed $2r+1$ was the lowest odd degree in which $X$ had a non-zero rational homotopy group).  In this latter case, the only possible non-zero differential is $d(v) = P$, some polynomial in degree $2r+2$ in the $u_i$.  A straightforward argument shows that, if $k \geq 2$, then $X_\Q$ has a non-zero cup-product in cohomology---indeed, must have infinite cup-length---and hence $X$ cannot be a rational co-H space.    On the other hand, if  $k=1$, then  $H^*(X;\Q) \cong \Q[u_1]/(u_1^n)$, a truncated polynomial algebra on a single even-degree generator.  For such a space, the zero-divisors lower bound implies that $\TC(X_\Q) \geq 2$, and so this case may also be eliminated.  The only remaining possibility, then, is that we have $X \simeq_\Q S^{2r+1}$.
\end{proof}

\begin{remark}
An interesting aspect of the above proof is that it makes very little use of the usual zero-divisors lower bounds.  
By contrast, the main result of \cite{GLO13} was proved by gleaning cohomological data and repeatedly appealing to the zero-divisors lower bound.
\end{remark}

We give another consequence of \corref{cor: rational sectioned fibration}, of a rather general nature.  

\begin{corollary}\label{cor: zero bracket}
Let $X$ be a simply connected, hyperbolic  space.   Suppose that we have linearly independent elements $a \in \pi_{2r+1}(X_\Q)$ and $b \in \pi_{2s+1}(X_\Q)$,
$r \leq s$, with zero Whitehead product: $[a,b] = 0 \in \pi_{2r + 2s+1}(X_\Q)$. Then we have $\TC_n(X_\Q) \geq 2n-1$, for each $n \geq 2$.
\end{corollary}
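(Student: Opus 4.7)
My plan is to apply \thmref{thm: rational cat Y_1 Y_2} with two carefully chosen auxiliary maps. Since $[a,b] = 0$ rationally, the wedge map $S^{2r+1}_\Q \vee S^{2s+1}_\Q \to X_\Q$ representing $a$ and $b$ extends over the product, and I take $Y_{1\Q} := S^{2r+1}_\Q \times S^{2s+1}_\Q$ with $f_{1\Q}$ being this extension. The induced $(f_{1\Q})_\#$ sends the canonical generators of $\pi_*(Y_{1\Q})$ to $a$ and $b$, which are linearly independent, so $(f_{1\Q})_\#$ is injective on rational homotopy, and $\cat(Y_{1\Q}) = 2$ by the rational product formula for L-S category. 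Crucially, the Mapping Theorem (\thmref{introthm: mapping th}) then forces $\cat(X_\Q) \geq \cat(Y_{1\Q}) = 2$, which will supply the extra unit needed in the final count.

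For the second map I exploit the hyperbolicity of $X$: a simply connected hyperbolic finite complex has $\pi_{\mathrm{odd}}(X)\otimes\Q$ infinite-dimensional, so I can pick a nonzero $c \in \pi_{2t+1}(X_\Q)$ with $2t+1 \notin \{2r+1, 2s+1\}$. Take $Y_{2\Q} := S^{2t+1}_\Q$ with $f_{2\Q}$ representing $c$; then $(f_{2\Q})_\#$ is injective on rational homotopy, $\cat(Y_{2\Q}) = 1$, and the nontrivial rational homotopy of $Y_{1\Q}$ and $Y_{2\Q}$ sits in disjoint degrees, so the image subgroups in $\pi_*(X_\Q)$ intersect trivially in every degree. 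The hypotheses of \thmref{thm: rational cat Y_1 Y_2} are therefore satisfied, and that theorem yields
$$ \TC_n(X_\Q) \;\geq\; \cat(Y_{1\Q}) + \cat(Y_{2\Q}) + (n-2)\,\cat(X_\Q) \;\geq\; 2 + 1 + 2(n-2) = 2n-1. $$

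The principal obstacle is the tightness of the target bound: it would collapse below $2n-1$ for $n \geq 3$ without $\cat(X_\Q) \geq 2$, so the Mapping-Theorem deduction from the existence of $f_{1\Q}$ is the decisive step. The remaining technicality is the existence of the auxiliary class $c$, for which I would invoke the standard fact, from the dichotomy theorem for simply connected finite complexes, that hyperbolic ones have infinitely many nonzero odd-degree rational homotopy groups.
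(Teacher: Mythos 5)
Your proof is correct, but it takes a genuinely different route from the paper's. The paper works with the minimal model of $X$: it quotients out the generators below $b$ (other than $a$) to get a map $Y_\Q\to X_\Q$ injective on rational homotopy, fibres $Y_\Q$ over $S^{2r+1}_\Q\times S^{2s+1}_\Q$ with a section (the section exists precisely because $[a,b]=0$ kills the $ab$ term in the differential), and then applies \corref{cor: rational sectioned fibration}; hyperbolicity enters only to guarantee that the fibre $F_\Q$ is noncontractible, contributing $\cat(F_\Q)\ge 1$, while $\cat(X_\Q)\ge 2$ is deduced from non-freeness of the homotopy Lie algebra. You instead feed \thmref{thm: rational cat Y_1 Y_2} directly with $Y_1=S^{2r+1}\times S^{2s+1}$ (contributing $2$, and giving $\cat(X_\Q)\ge 2$ via the mapping theorem -- a cleaner derivation of that bound) and a third odd sphere $Y_2=S^{2t+1}$ in a fresh degree (contributing $1$). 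The bookkeeping $2+1+2(n-2)=2n-1$ matches the paper's $\cat(F_\Q)+2+2(n-2)$. The one place where your argument leans on a nontrivial external input is the existence of $c$: this is not really the dichotomy theorem but rather the Friedlander--Halperin-type result (\cite[Th.33.3]{F-H-T}) that a simply connected space with finite-dimensional rational cohomology and finite-dimensional $\pi_{\mathrm{odd}}\otimes\Q$ is elliptic, so a hyperbolic finite complex has nonzero odd rational homotopy in infinitely many degrees; since the paper defines hyperbolic only for finite complexes, this is legitimate, but you should cite it correctly. Your approach buys a shorter, model-free argument; the paper's buys independence from that theorem (it only needs generators of the model above degree $2s+1$) and sets up the fibration technique reused in \thmref{thm: cat cover goes down}.
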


\begin{proof}
We argue with minimal models.  First, write the minimal model of $X$ as
$$\land(x_1, \ldots, x_k, a, x_{k+1}, \ldots, x_l, b, w_1, \ldots; d),$$
with generators in non-decreasing degree order and---by abuse of notation---$a$ and $b$ the odd-degree generators that correspond to  the linearly independent rational homotopy elements in the hypotheses.  Then the ideal generated by $\{x_1, \ldots, x_k, x_{k+1},$ $\ldots, x_l\}$ (without $a$) is $d$-stable, and we may project onto the quotient by this ideal
$$\land(x_1, \ldots, x_k, a, x_{k+1}, \ldots, x_l, b, w_1, \ldots; d) \to \land(a,  b, w_1, \ldots; \bar{d}).$$
This corresponds to a map $Y_\Q \to X_\Q$ that is injective in rational homotopy groups.  Next, we may fibre the model $ \land(a,  b, w_1, \ldots; \bar{d})$ of $Y$ as
$$\xymatrix{\land(a,b) \ar[r] & (\land(a,b) \otimes \land(W), \bar{d}) \ar@/_1pc/[l] \ar[r] & (\land(W), \bar{d}' )}$$
which corresponds to a fibration with section
$$\xymatrix{ F_\Q \ar[r]^{j} & Y_\Q \ar[r]_-{p} & S^{2r+1}_\Q \times S^{2s+1}_\Q \ar@/_1pc/[l]_{\sigma}.}$$
Note that we obtain the section (retraction in minimal models) because of the assumption about the Whitehead product vanishing: in minimal models, this means  that the term $ab$ does not occur in any differential (in $d$, and hence in $\bar{d}'$).  Because of our assumption that $X$ is hyperbolic, it follows that $Y$ is hyperbolic and, in particular, $F_\Q$ cannot be contractible.  Now apply \corref{cor: rational sectioned fibration}, to obtain that $\TC_n(X_\Q) \geq \cat(F_\Q) + 2 + (n-2) \cat(X_\Q)$.  Since $X_\Q$ does not have a free homotopy Lie algebra, we have that $\cat(X_\Q) \geq 2$.  Hence we have   $\TC_n(X_\Q)  \geq \cat(F_\Q) + 2n -2 \geq 2n-1$, since $\cat(F_\Q) \geq 1$. 
\end{proof}

This last result indicates an intriguing  connection between (vanishing of) products in  the homotopy Lie algebra and $\TC(-)$.  We pursue this connection in more detail  in the last section.

\section{From Rational Results to Integral Results}\label{sec: rational to integral}

In this section, we illustrate how our rational lower bounds may lead to a determination of ordinary (higher) $\TC(-)$.  We first recall the rational version of \corref{cor: TC X v Y}.

\begin{corollary}\label{cor: rational wedge}
Let $X$ and $Y$ be simply connected, of finite rational type.  Then for $n \geq 2$, we have
$$\cat(X_\Q) + \cat(Y_\Q) + (n-2) \max\{ \cat(X_\Q), \cat(Y_\Q) \} \leq \TC_n(X_\Q \vee Y_\Q).$$
\end{corollary}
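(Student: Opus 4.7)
The plan is to apply \thmref{thm: rational cat Y_1 Y_2} directly, taking the two maps $f_j$ of that theorem to be the wedge inclusions $i_1 \colon X \to X \vee Y$ and $i_2 \colon Y \to X \vee Y$ (with $X \vee Y$ playing the role of the ambient space there). Under the simply connected, finite rational type hypothesis, rationalization commutes with wedges, so $(X \vee Y)_\Q \simeq X_\Q \vee Y_\Q$, and the rationalized maps $i_{1\Q}$, $i_{2\Q}$ may be viewed as maps into $X_\Q \vee Y_\Q$.

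Next I would verify the hypotheses of \thmref{thm: rational cat Y_1 Y_2}. Each inclusion $i_j$ admits a retraction $p_j \colon X \vee Y \to Y_j$, namely the projection onto the $j$th summand, so $(i_{j\Q})_\#$ is split injective on every rational homotopy group. For the trivial-intersection condition, I would argue as follows: if $(i_{1\Q})_\#(\alpha) = (i_{2\Q})_\#(\beta)$ in $\pi_k(X_\Q \vee Y_\Q)$ for some $\alpha \in \pi_k(X_\Q)$ and $\beta \in \pi_k(Y_\Q)$, then applying $(p_{1\Q})_\#$ and using that $p_1 \circ i_2$ is null yields $\alpha = 0$ at once; by symmetry $\beta = 0$ as well.

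With the hypotheses in hand, \thmref{thm: rational cat Y_1 Y_2} delivers
\begin{equation*}
\cat(X_\Q) + \cat(Y_\Q) + (n-2)\,\cat(X_\Q \vee Y_\Q) \leq \TC_n(X_\Q \vee Y_\Q),
\end{equation*}
and I would finish by invoking the standard formula $\cat(X_\Q \vee Y_\Q) = \max\{\cat(X_\Q), \cat(Y_\Q)\}$ for the L-S category of a wedge of well-pointed spaces.

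There is no substantive obstacle here; the proof is essentially a mechanical application of the rational main theorem. The one point worth noting is that, in contrast with the integral version (\corref{cor: TC X v Y}), the weaker hypotheses of \thmref{thm: rational cat Y_1 Y_2}---the images of $(f_{1\Q})_\#$ and $(f_{2\Q})_\#$ are not required to span---allow direct use of the wedge inclusions themselves, bypassing the homotopy-fibre-and-retraction step that the integral proof required.
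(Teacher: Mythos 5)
Your proof is correct and is precisely one of the two routes the paper itself gives (the paper's one-line proof says to either apply \corref{cor: TC X v Y} to the wedge or to apply \thmref{thm: rational cat Y_1 Y_2} directly to the inclusions $i_1$ and $i_2$; you carry out the latter). The verification of injectivity and trivial intersection via the retractions $p_j$, and the final appeal to $\cat(X_\Q \vee Y_\Q) = \max\{\cat(X_\Q),\cat(Y_\Q)\}$, are all sound.
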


\begin{proof}
Either apply \corref{cor: TC X v Y} to the wedge $X_\Q \vee Y_\Q$, or apply \thmref{thm: rational cat Y_1 Y_2} directly to the inclusions  $i_1\colon X \to X \vee Y$ and $i_2\colon Y \to X \vee Y$.
\end{proof}

We may use this result to identify (integral)  topological complexity of  wedges of certain kinds of spaces.  Let $X$ be a simply connected space of finite type.  We say that $X$ is an \emph{oddly generated space} if its rational homotopy groups are concentrated in finitely many odd degrees.  In this case, $X$ has minimal model of the form $\land(v_1, \ldots,$ $v_k; d)$, with each $|v_i|$ odd---not necessarily of distinct degrees---and then we say that $X$ \emph{has rank $k$}.
Any iterated sequence of principal bundles starting from a 
product of odd spheres has minimal model of this form.  Lie groups themselves 
have such models with zero differential.

For the following result, recall that---supposing we have $X$ and $Y$ oddly generated and both of rank $k$--- we have a  general upper bound of 
$$\TC_n(X \vee Y) \leq \cat\big((X\vee Y)^n\big) \leq n\;\mathrm{max}\{\cat(X), \cat(Y)\}.$$

\begin{theorem}\label{thm: wedge of odd}
Suppose $X$ and $Y$ are simply connected, oddly generated spaces both of  rank $k$.    If $X$ and $Y$ satisfy $\cat(X)=\cat(Y) =k$, then we have $\TC_n(X \vee Y) = nk$.
\end{theorem}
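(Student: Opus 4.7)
My plan is as follows. The upper bound $\TC_n(X\vee Y)\leq nk$ has already been recorded in the paragraph preceding the statement, so the task is just to match it with the lower bound $\TC_n(X\vee Y)\geq nk$.

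For the lower bound I would first rationalize. By \lemref{lem: local TC} applied to the fibration $P_n$ for $X\vee Y$, together with the natural identification $(X\vee Y)_\Q\simeq X_\Q\vee Y_\Q$, one has $\TC_n(X\vee Y)\geq \TC_n(X_\Q\vee Y_\Q)$. Then \corref{cor: rational wedge} yields
$$\TC_n(X_\Q\vee Y_\Q)\geq \cat(X_\Q)+\cat(Y_\Q)+(n-2)\max\{\cat(X_\Q),\cat(Y_\Q)\}.$$
If I can verify that $\cat(X_\Q)=\cat(Y_\Q)=k$, the right-hand side becomes $k+k+(n-2)k=nk$, which combined with the upper bound gives the stated equality.

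The heart of the argument is therefore to show $\cat(X_\Q)=k$ for an oddly generated rank-$k$ space $X$ with $\cat(X)=k$ (and similarly for $Y$). The inequality $\cat(X_\Q)\leq k$ is immediate: either from the general bound $\cat(X_\Q)\leq\cat(X)=k$, or from the fact that the minimal model of $X_\Q$ has the form $\land(v_1,\dots,v_k;d)$ with only $k$ generators, so the standard upper bound on rational category via the number of generators in a Sullivan model gives $\cat(X_\Q)\leq k$. For the reverse inequality I would exploit the fact that all generators $v_i$ are odd: the algebraic structure of such a Sullivan model forces the Toomer invariant $e_0(X_\Q)$ to equal the rank $k$ (the cup-length-type obstruction coming from the top nonzero product $v_{i_1}\cdots v_{i_k}$ in cohomology under our hypothesis $\cat(X)=k$), which gives $\cat(X_\Q)\geq e_0(X_\Q)=k$.

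The main obstacle is thus really the equality $\cat(X_\Q)=k$ rather than the $\TC$-side of the story, which is handled cleanly by \corref{cor: rational wedge} and \lemref{lem: local TC}. Once that equality is in hand for both $X$ and $Y$, assembling the two displayed inequalities above yields $\TC_n(X\vee Y)\geq nk$, and together with the previously noted upper bound this gives $\TC_n(X\vee Y)=nk$ as required.
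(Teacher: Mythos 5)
Your proposal is correct and follows essentially the same route as the paper's proof: rationalize via \lemref{lem: local TC}, apply \corref{cor: rational wedge} together with $\cat(X_\Q)=\cat(Y_\Q)=k$, and match against the upper bound $\TC_n(X\vee Y)\leq n\,\cat(X\vee Y)=nk$. One small point: the equality $\eQ(X_\Q)=k$ for an oddly generated rank-$k$ space holds unconditionally (it is the standard fact the paper simply invokes, cf.\ its \exref{ex: TC(XvX) = 6}), and is not a consequence of the hypothesis $\cat(X)=k$, which is needed only for the integral upper bound.
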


\begin{proof}
For an oddly generated space $X$ of rank $k$, we have $\cat(X_\Q) = k$.  Applying \corref{cor: rational wedge}, we have
$$nk =  \cat(X_\Q) + \cat(Y_\Q) + (n-2) \max\{ \cat(X_\Q), \cat(Y_\Q) \} \leq \TC_n(X_\Q \vee Y_\Q).$$
But  $\TC_n(X_\Q \vee Y_\Q) \leq \TC_n(X \vee Y)\leq  \cat\big((X\vee Y)^n\big) \leq n \cat(X \vee Y) = nk$.
\end{proof}

To apply the result, we need examples in which $X$ and $Y$ are both oddly generated spaces and of the same rank as each other.  For instance, we may take $X$ or $Y$ to be one of: a product of $k$ odd-dimensional spheres; or $U(k)$;  or $SU(k+1)$ (see \cite{Sin75} or \cite[Prop.9.5, Th.9.47]{CLOT03}).
It is important to realise that, although we are starting with information about the rational homotopy of these spaces, our conclusion is one about the ordinary $\TC_n(-)$ of an (ordinary) space.  

It is not too hard to construct further examples of oddly generated spaces to which \thmref{thm: wedge of odd} may be applied, once again resulting in an exact determination of $\TC_n(-)$.

\begin{example}\label{ex: TC(XvX) = 6}
We construct a smooth manifold $X$ that is oddly generated with $\rk(X) = \cat(X) = 3$.  Take $p\colon E \to S^6$ to be the (unit) sphere bundle of the tangent bundle over $S^6$.  This is an $S^5$-bundle over $S^6$.  Now let $f\colon S^3 \times S^3 \to S^6$ be a (smooth)  map of degree $1$, and form the pullback
$$\xymatrix{X \ar[r] \ar[d]_{p'} & E \ar[d]^{p}\\
S^3 \times S^3 \ar[r]_-{f} & S^6,}$$
of $p$ along $f$.  This results in our space $X$, an $S^5$-bundle over $S^3 \times S^3$.  One sees that the minimal model of $X$ is $\land(v_1, v_2, v_3;d)$, with $|v_1| = |v_2| = 3$ and $|v_3| = 5$, with differential given by $d(v_3) = v_1 v_2$ (this kind of example is discussed in \cite{F-O-T08}).  Using rational category (in fact, the rational Toomer invariant) as a lower bound, and the usual ``dimension over connectivity" upper bound \cite[Th.1.49]{CLOT03}, we find that
$$3 = \eQ(X) \leq \cat(X_\Q) \leq \cat(X) \leq \frac{\dim(X)}{3} = \frac{11}{3} < 4,$$
and thus we have $\cat(X) = 3$.  Then we have, for example,  $\TC(X \vee X) = 6 = \cat\big( (X \vee X) \times (X \vee X) \big)$ from \thmref{thm: wedge of odd}.
\end{example}

\section{Consequences for the Avramov-F{\'e}lix Conjecture}\label{sec:A-F}

Recall the following conjecture from the introduction:

\begin{conjecture}[Avramov-F{\'e}lix]
If $X$ is a simply connected, hyperbolic finite complex, then $\pi_*(\Omega X) \otimes \Q$ contains a free Lie algebra on two generators.
\end{conjecture}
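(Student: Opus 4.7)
The plan is to use \thmref{introthm: TC A-F} as the central lever: it converts a small topological-complexity value on some connective cover $X^{[N]}$ into a rational co-$H$-space structure on that cover, from which the Milnor--Moore theorem applied to $\Omega X^{[N]}$ produces a free Lie algebra on at least two generators; this free sub-algebra injects into $\pi_*(\Omega X)\otimes\Q$ through the rationally injective map $\Omega X^{[N]}\to \Omega X$. Thus it would suffice, for any hyperbolic simply connected finite complex $X$, to exhibit some connective cover $X^{[N]}$ with $\TC(X^{[N]}_\Q)=2$ and with at least two rational cohomology generators (the latter being automatic from hyperbolicity).

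First, I would catalogue what is already within reach. The case $\cat(X_\Q)=2$ is the classical Friedlander--F\'elix theorem, and \corref{introcor: TC 3 A-F} handles $\cat(X_\Q)=3$ whenever some $\TC_n(X_\Q)$ saturates the value $3n-3$. For the general case I would attempt an induction on $\cat(X_\Q)$: choose the lowest odd-degree generator $v$ of the minimal model $\land(V;d)$ of $X$, pass to a sufficiently high connective cover $X^{[N]}$ to expose $v$ as the bottom class, and fibre as in the proof of \corref{cor: rational TC = 1}:
\[
F \longrightarrow X^{[N]}_\Q \longrightarrow S^{|v|}_\Q,
\]
which admits a rational section because $|v|$ is odd. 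Applying \corref{cor: rational sectioned fibration} yields $\cat(F_\Q)+1+(n-2)\cat(X^{[N]}_\Q)\leq \TC_n(X^{[N]}_\Q)$, so either $\TC(X^{[N']}_\Q)$ eventually drops to $2$ at some stage (closing the argument via \thmref{introthm: TC A-F}), or else $F$ is hyperbolic of strictly smaller rational category and the inductive hypothesis applies to $F$, producing a free Lie algebra on two generators in $\pi_*(\Omega F)\otimes\Q$ which then injects into $\pi_*(\Omega X)\otimes\Q$ along $F\to X^{[N]}\to X$.

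Second, I would deploy \thmref{introthm: rational cat Y_1 Y_2} and \corref{cor: zero bracket} as the main obstruction machinery controlling the intermediate steps: any candidate pair of rationally independent classes $a,b\in\pi_*(X_\Q)$ with $[a,b]=0$ produces $\TC_n(X_\Q)\geq 2n-1$, so the hypothetical absence of a free Lie algebra on two generators in $\pi_*(\Omega X)\otimes\Q$ translates into a mass of bracket-vanishing relations, which in turn force comparatively large values of $\TC_n(X_\Q)$. The idea is then to play this lower bound against the general upper bound $\TC_n(X_\Q)\leq n\cat(X_\Q)$ and against the finiteness constraint $\cat(X_\Q)\leq \dim X$ to squeeze out a contradiction whenever the inductive step above fails.

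The hard part will be the middle regime, in which $\TC(X^{[N]}_\Q)$ stabilizes at a value strictly between $2$ and the trivial upper bound $\cat(X^{[N]}_\Q)$ for every $N$, and the fibre $F$ in the splitting remains either rationally elliptic or hyperbolic of the same rational category as $X^{[N]}$. In that regime neither \thmref{introthm: TC A-F} nor the inductive hypothesis can be invoked cleanly, and closing the argument will almost certainly require combining the $\TC$-based obstructions of the present paper with the exponential-growth estimates on $\dim_\Q \pi_i(X)\otimes\Q$ that underpin the F\'elix--Halperin--Thomas elliptic/hyperbolic dichotomy, in order to rule out such stable intermediate behaviour of $\TC$ along the tower of connective covers.
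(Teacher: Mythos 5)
There is a fundamental gap: the statement you are proving is the Avramov--F\'elix conjecture itself, which the paper explicitly records as a long-standing \emph{open} problem and for which it establishes only conditional special cases (\corref{cor: TC 2 A-F}, \corref{cor: TC 3 A-F}), always under an extra hypothesis forcing $\TC_n(X_\Q)$ to sit at or very near its lower bound $(n-1)\cat(X_\Q)$. The ``middle regime'' you defer at the end of your proposal is precisely the open content of the conjecture, and both mechanisms you propose for escaping it fail. First, the reduction ``find a connective cover with $\TC(X^{[N]}_\Q)=2$'' is not available in general: the paper's own example $X=(S^3\vee S^3)\times(S^3\vee S^3)$ is a hyperbolic finite complex with $\cat(X^{[N]}_\Q)=2$ for every $N$, so no connective cover is a rational co-H-space, and hence (by the contrapositive of \thmref{introthm: TC A-F}) no connective cover can have rational topological complexity $2$. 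The sufficient condition you seek simply never holds for such $X$. (There is also the smaller issue that \thmref{introthm: TC A-F} is stated for finite complexes, which $X^{[N]}$ is not.) Second, the inductive step is vacuous without an a priori upper bound on $\TC_n(X^{[N]}_\Q)$: \corref{cor: rational sectioned fibration} gives $\cat(F_\Q)+1+(n-2)\cat(X^{[N]}_\Q)\le\TC_n(X^{[N]}_\Q)$, but since the right-hand side can be as large as $n\,\cat(X^{[N]}_\Q)$, this permits $\cat(F_\Q)$ up to $2\cat(X^{[N]}_\Q)-1$ and in no way forces $\cat(F_\Q)<\cat(X^{[N]}_\Q)$, nor does it force $\TC$ to drop along the tower of covers. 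This is exactly why every result in \secref{sec:A-F} assumes $\TC_n(X_\Q)=(n-1)\cat(X_\Q)$ or a similar saturation condition.

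Your obstruction machinery also does not squeeze. \corref{cor: zero bracket} converts one vanishing Whitehead product of odd-degree spherical classes into $\TC_n(X_\Q)\ge 2n-1$, but the universal upper bound is $n\,\cat(X_\Q)$, so for $\cat(X_\Q)\ge 3$ these bounds are perfectly compatible and no contradiction arises. Moreover, the hypothetical absence of a free Lie subalgebra on two generators does \emph{not} translate into vanishing of individual brackets of elements of $\pi_*(X_\Q)$: non-freeness of the subalgebra generated by a pair $a,b$ is a condition on iterated brackets, and the paper's own example following \corref{cor: TC 2 A-F} (the Witt-type algebra) shows that ``no zero brackets'' is strictly weaker than freeness. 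So your proposal is a reasonable sketch of the strategy the paper actually pursues for its \emph{conditional} results, but it does not, and with the tools of this paper cannot, yield the unconditional conjecture.
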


This has been established in some cases, including the case in which $X$ has $\cat(X_\Q) = 2$ (\cite{F-H-T84}, although some sub-cases are left open),  the case in which $\pi_*(\Omega X) \otimes \Q$ has depth $1$ (\cite{Bo-Ja89}, which covers the sub-cases left open for $\cat(X_\Q) = 2$), and the case in which $X$ is a Poincar{\'e} duality space, with $H^*(X;\Q)$ evenly graded and generated by at most three generators (implied by the results of \cite{Av82}).  Generally speaking, though, it remains open.

Now for a given value of $\cat(X_\Q)$, the general inequalities $\cat(X^{n-1}) \leq \TC_n(X)$ $\leq \cat(X^n)$, for each $n \geq 2$, imply that $\TC_n(X_\Q)$ is lowest when we have equality $\TC_n(X_\Q) = (n-1)\cat(X_\Q)$.  Said differently, amongst spaces with $\cat(X_\Q) = k$, those with $\TC_n(X_\Q) = (n-1)k$ for some $n \geq 2$ satisfy a particularly strong constraint, such that one might hope to prove theorems about them.   An extreme case of this point of view is given in \corref{cor: rational TC = 1}, and its integral counterpart in  \cite{GLO13}.  Here, we make some progress on the Avramov-F{\'e}lix (A-F) conjecture for spaces that satisfy  $\TC_n(X_\Q) = (n-1)\cat(X_\Q)$. 

A rational co-H-space, that is, a simply connected space $X$ with $\cat(X_\Q) = 1$, has rational homotopy Lie algebra $\pi_*(\Omega X)\otimes \Q$ a free graded Lie algebra, and thus automatically satisfies the A-F conjecture (assuming it is rationally hyperbolic).  Also, for any connective cover $X^{[N]}$ of a simply connected, finite-type space $X$, we have an inclusion of rational homotopy Lie algebras $\pi_*(\Omega X^{[N]})\otimes \Q \subseteq \pi_*(\Omega X)\otimes \Q$.    Furthermore, by \thmref{introthm: mapping th}, we know that $\cat(X^{[N]}_\Q) \leq \cat(X_\Q)$ for any cover, however there is no guarantee that $\cat(-)$ will actually decrease upon passing to a cover. 
Our basic strategy here is to consider connective covers, attempting to reduce $\cat(X^{[N]}_\Q)$ to a value for which the A-F conjecture is known to hold.

This strategy prompts the following questions.  

\begin{question}\label{que: co-H cover}
When does a simply connected, hyperbolic finite complex have a connective cover that is a rational co-H-space? More generally, when does a simply connected, hyperbolic finite complex have a connective cover of strictly lower rational category?
\end{question}

\begin{example} In which we observe  that not every space has a connective cover that is a rational co-H-space, and not every space has a connective cover of strictly lower rational category.  For take $X = (S^3 \vee S^3) \times (S^3 \vee S^3)$.  Then for each $N$, we have $X^{[N]} = (S^3 \vee S^3)^{[N]} \times (S^3 \vee S^3)^{[N]}$.   It is easy to see that here we have $\cat(X_\Q) = \cat(X^{[N]}_\Q) = 2$ for each $N$.   We may adapt this example to one in which $\cat(X^{[N]}_\Q)$ decreases from $\cat(X_\Q)$ to any intermediate value between $\cat(X_\Q)$ and $1$, and stabilizes at that value.  For instance, take $X =  (S^3 \vee S^3) \times (S^3 \vee S^3) \times S^5 \times S^7$.  Then we have $\cat(X_\Q) = 4$,  $\cat(X^{[5]}_\Q) = 3$, and $\cat(X^{[N]}_\Q) = 2$ for $N \geq 7$.
\end{example}

\noindent{}We give a partial response to \queref{que: co-H cover} in our next result. We will then specialize to  small values of  $\cat(X_\Q)$ ($\leq 3$) and establish  some cases of the A-F conjecture.

In the following results, note that  $\pi_\mathrm{odd}(X) \otimes \Q$ must be non-zero, otherwise $X$ would have the rational homotopy type of a product of even-dimensional Eilenberg-Mac Lane spaces, and could not be finite.

\begin{theorem}\label{thm: cat cover goes down} 
Let $X$ be a simply connected, hyperbolic finite complex.  Suppose that we have $\TC_n(X_\Q) =  (n-1)\cat(X_\Q)$ for some $n \geq 2$.
\begin{itemize}
\item[(1)] Suppose $2r+1$ is the lowest odd degree in which  $\pi_*(X) \otimes \Q$ is non-zero.  Then we have
$$\cat(X_\Q^{[2r+1]}) \leq \cat(X_\Q) - 1.$$
\item[(2)]  Suppose that  there exist linearly independent elements $a \in \pi_{2p}(\Omega X)\otimes \Q$, $b \in \pi_{2p}(\Omega X)\otimes \Q$, $p\leq q$, with Samelson product $\langle a, b\rangle  = 0 \in \pi_{2p+2q}(\Omega X)\otimes \Q$.  Then we have 
$$\cat(X_\Q^{[2q+1]}) \leq \cat(X_\Q) - 2.$$ 
\end{itemize}
\end{theorem}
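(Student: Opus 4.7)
My strategy in both parts is to mimic the proofs of Corollaries \ref{cor: rational TC = 1} and \ref{cor: zero bracket}: build a rational fibration with section from minimal-model data, apply Corollary \ref{cor: rational sectioned fibration}, combine with the hypothesis $\TC_n(X_\Q)=(n-1)\cat(X_\Q)$, and then use the mapping theorem to transfer the resulting category bound from the fibre to the appropriate connective cover of $X$.

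For part (1), since $2r+1$ is the lowest odd degree with non-zero rational homotopy, the minimal model of the $(2r)$-connective cover $X^{[2r]}$ begins with a generator $v$ of degree $2r+1$ (corresponding to a chosen non-zero class in $\pi_{2r+1}(X)\otimes\Q$). The KS-extension $\land(v)\to(\land(v)\otimes\land W,d)\to(\land W,\bar d)$ gives a rational fibration with section $F_\Q\to X^{[2r]}_\Q\to S^{2r+1}_\Q$. Since $X^{[2r]}\to X$ is injective on rational homotopy, Corollary \ref{cor: rational sectioned fibration} yields
$$\cat(F_\Q)+1+(n-2)\cat(X_\Q)\leq (n-1)\cat(X_\Q),$$
so $\cat(F_\Q)\leq \cat(X_\Q)-1$. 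A minimal-model comparison shows that $F$ and $X$ share the same rational homotopy in every degree $\geq 2r+2$, so $X^{[2r+1]}_\Q$ admits a rational lift into $F_\Q$ injective on rational homotopy. The mapping theorem then gives $\cat(X^{[2r+1]}_\Q)\leq \cat(F_\Q)\leq \cat(X_\Q)-1$.

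Part (2) proceeds analogously, using two spheres instead of one. Write $\hat a\in\pi_{2p+1}(X)\otimes\Q$ and $\hat b\in\pi_{2q+1}(X)\otimes\Q$ for the Whitehead-product adjoints of $a$ and $b$; the vanishing Samelson product translates to $[\hat a,\hat b]=0$. Following the proof of Corollary \ref{cor: zero bracket}, project the minimal model of $X$ onto the quotient $\land(\hat a,\hat b,w_1,\ldots;\bar d)$ to produce a map $Y_\Q\to X_\Q$ injective on rational homotopy, and fibre $Y_\Q$ rationally with section over $S^{2p+1}_\Q\times S^{2q+1}_\Q$ (possible precisely because $\hat a \hat b$ appears in no differential). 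Corollary \ref{cor: rational sectioned fibration} now gives
$$\cat(F_\Q)+2+(n-2)\cat(X_\Q)\leq (n-1)\cat(X_\Q),$$
so $\cat(F_\Q)\leq \cat(X_\Q)-2$, and the same mapping-theorem argument as in part (1) yields $\cat(X^{[2q+1]}_\Q)\leq \cat(F_\Q)\leq \cat(X_\Q)-2$.

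The main obstacle is the last step in each part: constructing the rational lift $X^{[N]}_\Q\to F_\Q$ and verifying its injectivity on rational homotopy. In part (1) the situation is clean because only the single generator $v$ is killed when passing from $X^{[2r]}$ to $F$, so $F$ and $X^{[2r+1]}$ are close to rationally equivalent in the relevant range. In part (2) one kills many low-degree generators (the $x_j$ as well as $\hat a,\hat b$), so one must carefully track how the differentials of generators in degrees $>2q+1$ are altered under the quotient $X\rightsquigarrow Y\rightsquigarrow F$, and confirm that this preserves the structure needed for the desired lift and the injectivity of $(X^{[2q+1]}_\Q)_\#\to(F_\Q)_\#$ on rational homotopy.
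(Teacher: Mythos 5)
Your overall strategy---realize the relevant bound via a rationally sectioned fibration and Corollary~\ref{cor: rational sectioned fibration}, then transfer it to the connective cover---is workable, but it is a genuinely different and substantially longer route than the paper's, and the step you yourself flag as ``the main obstacle'' is left unproved. The paper handles both parts in one stroke by applying Theorem~\ref{thm: rational cat Y_1 Y_2} \emph{directly with $Y_2$ equal to the connective cover}: in part (1) the two maps are an essential map $\alpha\colon S^{2r+1}\to X_\Q$ (injective on rational homotopy because an odd sphere is a rational Eilenberg-Mac Lane space) and the cover map $X^{[2r+1]}_\Q\to X_\Q$; in part (2) they are a map $S^{2p+1}\times S^{2q+1}\to X_\Q$ extending $(\hat a\mid\hat b)$ (which exists precisely because $[\hat a,\hat b]=0$) and $X^{[2q+1]}_\Q\to X_\Q$. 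The image subgroups intersect trivially for pure degree reasons, so the theorem yields $1+\cat(X^{[2r+1]}_\Q)+(n-2)\cat(X_\Q)\leq(n-1)\cat(X_\Q)$, respectively $2+\cat(X^{[2q+1]}_\Q)+(n-2)\cat(X_\Q)\leq(n-1)\cat(X_\Q)$, and one is done. No fibration needs to be constructed and no auxiliary fibre $F$ has to be compared with the cover.

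On your route, the comparison of $F_\Q$ with the connective cover is the one genuine gap, and it should be closed by a lifting argument rather than by ``tracking differentials.'' The base of your sectioned fibration is a rational Eilenberg-Mac Lane space with homotopy concentrated in degrees $\leq 2r+1$ (resp.\ $\leq 2q+1$), so the composite from the $(2r+1)$- (resp.\ $(2q+1)$-) connective cover of the total space to the base is nullhomotopic and the cover lifts into $F_\Q$; because the fibration is sectioned, the fibre inclusion is injective on rational homotopy, hence so is the lift, and the mapping theorem applies. In part (2) there is an additional point you pass over: your $F$ sits over $Y$, not over $X$, and there is no map from $X$ or its covers to $Y$, so before lifting you must first observe that $X^{[2q+1]}_\Q\simeq Y^{[2q+1]}_\Q$ (true, since $\pi_i(Y_\Q)\to\pi_i(X_\Q)$ is an isomorphism for $i\geq 2q+2$, all generators of the model of $X$ in those degrees being among the $w_j$). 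With these two points supplied your argument is correct, but as written it stops short of a proof exactly where the paper's direct appeal to Theorem~\ref{thm: rational cat Y_1 Y_2} makes the difficulty disappear.
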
 

\begin{proof}
(1)     Choose any essential map $\alpha\colon S^{2r+1} \to X_\Q$.  Because odd-dimensional spheres are rational Eilenberg-Mac Lane spaces, $\alpha$ is injective in (all) rational homotopy groups.  Then the maps $\alpha\colon Y_1 = S^{2r+1} \to X_\Q$ and $i\colon X^{[2r+1]}_\Q \to X_\Q$ satisfy the hypotheses of   \thmref{thm: rational cat Y_1 Y_2}, yielding an inequality
$$ 1 + \cat(X^{[2r+1]} _\Q) + (n-2) \cat(X_\Q) \leq \TC_n(X_\Q) =  (n-1)\cat(X_\Q).$$
It follows that we must have $\cat(X^{[2r+1]} _\Q) \leq \cat(X_\Q) - 1$, as asserted.
 
(2) Under the standard identifications of $\pi_*(\Omega X)$  with $\pi_{*+1}(X)$, and Samelson product in $\pi_*(\Omega X)$ with Whitehead product in $\pi_{*}(X)$, the assumption may be phrased as follows:  For  linearly independent elements $\alpha \in \pi_{2p+1}(X_\Q)$, $\beta \in \pi_{2q+1}(X_\Q)$,  we have  $[\alpha, \beta] = 0 \in \pi_{2p+2q+1}(X_\Q)$, where the bracket denotes their Whitehead product.  Because their Whitehead product is zero, we have a map $f_1\colon Y_1 = S^{2p+1} \times S^{2q+1} \to X_\Q$ that restricts to $(\alpha\mid \beta)\colon S^{2p+1} \vee S^{2q+1} \to X_\Q$ on the wedge.  Once again, because odd-dimensional spheres are rational Eilenberg-Mac Lane spaces, $f_1$ is injective in (all) rational homotopy groups.   Then the  maps $f_1$ and  $f_2\colon Y_2 = X^{[2q+1]}_\Q \to X_\Q$ satisfy the hypotheses of   \thmref{thm: rational cat Y_1 Y_2}, and we obtain an inequality
$$2 + \cat(X^{[2q+1]} _\Q) + (n-2) \cat(X_\Q) \leq \TC_n(X_\Q) =  (n-1)\cat(X_\Q),$$
whence we have $\cat(X^{[2q+1]} _\Q) \leq \cat(X_\Q) - 2$.
\end{proof}

For a  graded Lie algebra $L$, we say that \emph{$L$ has no zero brackets} if, whenever $x,y \in L$ are linearly independent elements, we have $[x,y] \not=0 \in L$.

Recall again the general inequality $(n-1) \cat(X_\Q) \leq \TC_n(X_\Q)$, for each $n\geq 2$.  If we  assume that $\TC_n(X_\Q) \leq 2n-3$, for some $n$, then it follows that $\cat(X_\Q) =1$, and $X$ is a rational co-H-space.  Since we are assuming $X$ is hyperbolic, this implies that $X$ has same rational homotopy type as a wedge of at least two spheres, and hence satisfies the A-F conjecture.  The next step, therefore, is the case in which  $\TC_n(X_\Q) \leq 2n-2$, for some $n\geq 2$.  The interest here is in the case in which $\cat(X_\Q) = 2$ and $\TC_n(X_\Q) = 2n-2$ for some $n$.  Any space with $\cat(X_\Q) = 2$ is already known to satisfy the A-F conjecture \cite{F-H-T84, Bo-Ja89}.  Here, we will obtain a stronger conclusion using the extra constraint on $\TC_n(X_\Q)$.

\begin{corollary}\label{cor: TC 2 A-F}
Let $X$ be a simply connected, hyperbolic finite complex, and suppose that 
$\TC_n(X_\Q) \leq 2n-2$, for some $n\geq 2$.  Then $\pi_\mathrm{even}(\Omega X) \otimes \Q$ has no zero brackets.  Furthermore, if $\pi_\mathrm{2r}(\Omega X) \otimes \Q$ is the lowest-degree non-zero part of $\pi_\mathrm{even}(\Omega X) \otimes \Q$, then the connective cover $X^{[2r+1]}_\Q$ is a (rational) co-H-space.  In particular, $\pi_*(\Omega X) \otimes \Q$ contains as a sub-Lie algebra the infinite-dimensional free Lie algebra  $\pi_*(\Omega X^{[2r+1]}) \otimes \Q$.
\end{corollary}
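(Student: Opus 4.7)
The plan is to deduce the conclusion from Theorem~\ref{thm: cat cover goes down} (Theorem~5.3 above) once we are in its equality case. From the hypothesis $\TC_n(X_\Q)\leq 2n-2$ and the general bound $(n-1)\cat(X_\Q)\leq \TC_n(X_\Q)$ we get $\cat(X_\Q)\leq 2$. If $\cat(X_\Q)=1$, then $X_\Q$ is already a rational co-H-space; the mapping theorem forces $\cat(X^{[N]}_\Q)\leq 1$ for every connective cover, and hyperbolicity (inherited by $X^{[N]}$ since only finitely many rational homotopy groups are killed) forbids contractibility, so all three conclusions follow from the classical characterization of co-H-spaces by freeness of their rational homotopy Lie algebra. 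In the main case $\cat(X_\Q)=2$, the inequalities pinch to
$$\TC_n(X_\Q)=2n-2=(n-1)\cat(X_\Q),$$
which is precisely what Theorem~\ref{thm: cat cover goes down} requires.

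For part~(1), I argue by contradiction. If there exist linearly independent $a\in\pi_{2p}(\Omega X)\otimes\Q$ and $b\in\pi_{2q}(\Omega X)\otimes\Q$ with $p\leq q$ and $\langle a,b\rangle=0$, then Theorem~\ref{thm: cat cover goes down}(2) gives $\cat(X^{[2q+1]}_\Q)\leq\cat(X_\Q)-2=0$, so $X^{[2q+1]}_\Q$ is contractible. But this means $\pi_i(X)\otimes\Q=0$ for all $i>2q+1$, contradicting the hyperbolicity of $X$.

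For part~(2), let $2r$ be the lowest degree in which $\pi_{\mathrm{even}}(\Omega X)\otimes\Q$ is non-zero; via the shift $\pi_k(\Omega X)\cong\pi_{k+1}(X)$ this is the same as $2r+1$ being the lowest odd degree in which $\pi_*(X)\otimes\Q$ is non-zero. (Such a $2r$ exists because $X$ is a hyperbolic finite complex: otherwise $X_\Q$ would be a product of even Eilenberg--Mac Lane spaces, violating finiteness.) Theorem~\ref{thm: cat cover goes down}(1) gives $\cat(X^{[2r+1]}_\Q)\leq 1$, and the inherited hyperbolicity of $X^{[2r+1]}$ rules out $\cat=0$, so $X^{[2r+1]}_\Q$ is a (non-trivial) rational co-H-space, establishing~(2). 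Part~(3) then follows by invoking the classical fact that a simply connected rational co-H-space has free rational homotopy Lie algebra; this free Lie algebra is infinite-dimensional because $X^{[2r+1]}$ remains hyperbolic, and the inclusion $X^{[2r+1]}\to X$ induces the required embedding $\pi_*(\Omega X^{[2r+1]})\otimes\Q\hookrightarrow\pi_*(\Omega X)\otimes\Q$ of rational Lie algebras.

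The main obstacle is really just the bookkeeping between hyperbolicity of $X$ and of its connective covers, invoked twice (to exclude contractibility of $X^{[2q+1]}_\Q$ in~(1) and of $X^{[2r+1]}_\Q$ in~(2)); this preservation is immediate since connective covers kill only finitely many rational homotopy groups. A minor secondary point is the classical characterization of rational co-H-spaces by freeness of their homotopy Lie algebra, which is what promotes the co-H conclusion of~(2) into the explicit sub-Lie-algebra statement of~(3).
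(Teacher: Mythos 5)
Your proposal is correct and follows essentially the same route as the paper: deduce $\cat(X_\Q)\leq 2$ from the general bound $(n-1)\cat(X_\Q)\leq\TC_n(X_\Q)$, dispose of the case $\cat(X_\Q)=1$ via freeness of the homotopy Lie algebra, and in the case $\cat(X_\Q)=2$ note the forced equality $\TC_n(X_\Q)=(n-1)\cat(X_\Q)$ so that parts (2) and (1) of \thmref{thm: cat cover goes down} give, respectively, the no-zero-brackets claim (by contradiction with non-contractibility of connective covers of a hyperbolic space) and the co-H-space claim. Your extra bookkeeping on the degree shift $\pi_k(\Omega X)\cong\pi_{k+1}(X)$ and on hyperbolicity passing to connective covers is detail the paper leaves implicit, but the argument is the same.
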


\begin{proof}
The hypothesis $\TC_n(X_\Q) \leq 2n-2$, for some $n\geq 2$ entails $\cat(X_\Q) \leq 2$.   If $\cat(X_\Q) = 1$, then $\pi_*(\Omega X) \otimes \Q$  is a free Lie algebra and we are done.  So, assume that $\cat(X_\Q) = 2$.   First, suppose that we have two linearly independent, even-degree elements in  $\pi_*(\Omega X) \otimes \Q$  whose bracket is zero.  Then part (2) of \thmref{thm: cat cover goes down} contradicts the fact that each connective cover $X_\Q^{[N]}$ is non-contractible.   Hence,  $\pi_\mathrm{even}(\Omega X) \otimes \Q$ has no zero brackets.   
The remaining assertion follows directly from  part (1) of \thmref{thm: cat cover goes down}. 
\end{proof}

\begin{remark}
The condition that a graded Lie algebra have no zero brackets is, in general,  strictly weaker than the condition that the Lie algebra be free.  If a graded Lie algebra is free, then it has no zero brackets.  The example below illustrates that the converse need not be true.  Note, however, that our example is not the rational homotopy Lie algebra of a hyperbolic space, as it does not have the (exponentially) increasing ranks displayed by such.  Clearly, no zero brackets entails a rich bracket structure.  It would be interesting to understand more fully the relationship, if any, between no zero brackets and free-ness, in the context of hyperbolic rational homotopy Lie algebras. 
\end{remark}

\begin{example}
Let $L$ be the evenly graded vector space with $L_{2i} = \langle x_i \rangle$, a one-dimensional vector space with basis element $x_i$, for each $i = 1, 2, \ldots$.  Define a bracket structure on $L$ by setting $[x_i, x_j] = 2(j-i) x_{i+j}$, for $i < j$.  Then $L$ is a graded Lie algebra with no zero brackets.  Evidently, $L$ is not a free graded Lie algebra (nor does it contain a sub-Lie algebra that is free on two generators). This example is based on  the Witt algebra; we thank Simon Wadsley for pointing it out to us (via MathOverflow).
\end{example}

We continue to probe the A-F conjecture, relaxing the constraint on $\TC_n(-)$.   For $\TC_n(X_\Q)$ in the range $2n-2 \leq \TC_n(X_\Q) \leq 3n-4$, for any $n$, we are still constrained to $\cat(X_\Q) \leq 2$.  However, in the range $2n-1 \leq \TC_n(X_\Q) \leq 3n-4$, we do not have $\TC_n(X_\Q)$ as low as possible, given the value of $\cat(X_\Q)$.  The next step, then, is to consider $\TC_n(X_\Q) \leq 3n-3$, which now allows for $\cat(X_\Q) = 3$.  We will ``bootstrap," using the affirmative solution to the A-F conjecture for spaces of rational category $2$, and establish the A-F conjecture here.  As with the previous result, we will actually obtain somewhat finer information.

\begin{corollary}\label{cor: TC 3 A-F}
Let $X$ be a simply connected, hyperbolic finite complex with $\cat(X_\Q)$ $= 3$ and 
$\TC_n(X_\Q) = 3n-3$, for some $n\geq 2$.  If there exist linearly independent elements $a, b  \in \pi_\mathrm{even}(\Omega X) \otimes \Q$ with $\langle a, b\rangle  = 0 $, and $|a| \leq |b| = 2q$,  then $X^{[2q+1]}_\Q$ is a rational co-H-space.  Independently of whether or not this is the case,  we have $\cat(X^{[2r+1]}_\Q) \leq 2$, where $2r$ is the lowest-degree non-zero part of $\pi_\mathrm{even}(\Omega X) \otimes \Q$.  In all cases, $X$ satisfies the A-F conjecture.
\end{corollary}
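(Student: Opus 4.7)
The strategy is to apply \thmref{thm: cat cover goes down} twice---once under each of the two numbered hypotheses---and then bootstrap from the known cases of the Avramov--F{\'e}lix conjecture.

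Since $\cat(X_\Q)=3$, the hypothesis $\TC_n(X_\Q)=3n-3$ is precisely the extremal equality $\TC_n(X_\Q)=(n-1)\cat(X_\Q)$ required to apply \thmref{thm: cat cover goes down}.  Given linearly independent $a,b\in\pi_{\mathrm{even}}(\Omega X)\otimes\Q$ with vanishing Samelson product and $|a|\le|b|=2q$, part~(2) of that theorem yields $\cat(X_\Q^{[2q+1]})\le\cat(X_\Q)-2=1$.  Because $X$ is rationally hyperbolic, and $X^{[2q+1]}$ differs from $X$ only in finitely many low-degree rational homotopy groups, the cover $X^{[2q+1]}$ is also rationally hyperbolic, and in particular not rationally contractible.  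Hence $\cat(X_\Q^{[2q+1]})=1$, so $X_\Q^{[2q+1]}$ is a rational co-H-space, establishing the first assertion.  For the second assertion, observe that under the standard identification $\pi_{2r}(\Omega X)\otimes\Q\cong\pi_{2r+1}(X)\otimes\Q$, the integer $2r+1$ is the lowest odd degree in which $\pi_*(X)\otimes\Q$ is non-zero.  Part~(1) of \thmref{thm: cat cover goes down} then gives $\cat(X_\Q^{[2r+1]})\le\cat(X_\Q)-1=2$.

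For the Avramov--F{\'e}lix statement, I would split into two cases.  If linearly independent $a,b\in\pi_{\mathrm{even}}(\Omega X)\otimes\Q$ with $\langle a,b\rangle=0$ exist, then by the first assertion $X_\Q^{[2q+1]}$ is a rational co-H-space.  A simply connected rational co-H-space has the rational homotopy type of a wedge of rational spheres, and since $X^{[2q+1]}$ is rationally hyperbolic, this wedge must involve at least two spheres (a single rational sphere has finite-dimensional $\pi_*(\Omega(-))\otimes\Q$).  Consequently $\pi_*(\Omega X^{[2q+1]})\otimes\Q$ is a free graded Lie algebra on at least two generators, and this free Lie algebra embeds in $\pi_*(\Omega X)\otimes\Q$.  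Otherwise, $\pi_{\mathrm{even}}(\Omega X)\otimes\Q$ has no zero brackets; by the second assertion, $\cat(X_\Q^{[2r+1]})\le 2$, and $X^{[2r+1]}$ is rationally hyperbolic as before.  Applying the known cases of the A-F conjecture for rational category at most $2$ cited in the introduction \cite{F-H-T84, Bo-Ja89}, one obtains a free Lie algebra on two generators inside $\pi_*(\Omega X^{[2r+1]})\otimes\Q\subseteq\pi_*(\Omega X)\otimes\Q$.

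The principal obstacle is justifying the use of the A-F conjecture for rational category $2$ on the cover $X^{[2r+1]}$, which is not a finite CW complex even when $X$ is.  One must verify that the cited arguments of F{\'e}lix--Halperin--Thomas and Bo{\i}--Jacobsson go through under the weaker hypothesis of rational finite type plus rational hyperbolicity; since both proofs are phrased in rational homotopy and depend only on finiteness of the rational category rather than on finiteness of the complex, this should pose no essential difficulty.  A secondary point requiring explicit mention is the passage from ``simply connected rational co-H-space of infinite-dimensional rational homotopy'' to ``wedge of at least two rational spheres,'' which follows from the classification of rational co-H-spaces together with the finite-dimensionality of $\pi_*(\Omega S^n)\otimes\Q$.
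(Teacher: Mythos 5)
Your argument is correct and follows the paper's own route exactly: part (2) of \thmref{thm: cat cover goes down} gives the co-H-space conclusion for $X^{[2q+1]}_\Q$, part (1) gives $\cat(X^{[2r+1]}_\Q)\leq 2$, and the A-F conjecture is then settled either by freeness of the homotopy Lie algebra of a rational co-H-space or by invoking \cite{F-H-T84, Bo-Ja89} for the category-$2$ cover. Your closing caveat about applying those cited results to the (non-finite) connective cover is a reasonable point of care that the paper passes over silently, but it does not change the argument.
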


\begin{proof}
This follows directly from part (2) of \thmref{thm: cat cover goes down}.  In the second case here, we rely upon \cite{F-H-T84, Bo-Ja89} to conclude the A-F conjecture for $X^{[N]} _\Q$, and thus for $X_\Q$.
\end{proof}

Returning briefly to the situation in which $\cat(X_\Q) = 2$, and  $2n-1 \leq \TC_n(X_\Q) \leq 3n-4$ is not as low as possible, there is one more consequence to be gleaned from 
\thmref{thm: rational cat Y_1 Y_2}.  This concerns \queref{que: co-H cover}. 

\begin{corollary}\label{cor: cat 2 TC 3}
Let $X$ be a simply connected, hyperbolic finite complex with $\cat(X_\Q)$ $= 2$ and 
$\TC_n(X_\Q) = 2n-1$, for some $n\geq 2$.  If there exist linearly independent elements $a, b  \in \pi_\mathrm{even}(\Omega X) \otimes \Q$ with $\langle a, b\rangle  = 0 $, and $|a| \leq |b| = 2q$,  then $X^{[2q+1]}_\Q$ is a rational co-H-space.
\end{corollary}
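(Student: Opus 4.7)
The plan is to adapt the proof of part (2) of \thmref{thm: cat cover goes down} (equivalently the argument giving \corref{cor: TC 3 A-F}) to the revised numerical data $\cat(X_\Q) = 2$, $\TC_n(X_\Q) = 2n-1$. The only change is bookkeeping at the end: the hypothesis $\TC_n(X_\Q) = (n-1)\cat(X_\Q) + 1$ is exactly one more than the minimum permitted by the general lower bound, so the bound on $\cat(X^{[2q+1]}_\Q)$ that comes out of \thmref{thm: rational cat Y_1 Y_2} is weakened by one, yielding $\cat(X^{[2q+1]}_\Q) \leq 1$ rather than $0$.

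More concretely, I would first translate the hypothesis via the standard identifications $\pi_*(\Omega X)\otimes\Q \cong \pi_{*+1}(X)\otimes\Q$ and Samelson product $\leftrightarrow$ Whitehead product, obtaining linearly independent odd-degree classes $\alpha \in \pi_{2p+1}(X_\Q)$, $\beta \in \pi_{2q+1}(X_\Q)$ with $[\alpha,\beta] = 0$. The vanishing Whitehead product lets me extend $(\alpha \mid \beta)\colon S^{2p+1}\vee S^{2q+1} \to X_\Q$ across the product, producing $f_1\colon Y_1 = S^{2p+1}\times S^{2q+1} \to X_\Q$. Because odd-dimensional spheres are rational Eilenberg--Mac\,Lane spaces, $f_1$ is injective on all rational homotopy groups. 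Paired with the connective-cover inclusion $f_2\colon Y_2 = X^{[2q+1]}_\Q \to X_\Q$, the image of $(f_{1\Q})_\#$ is supported in degrees $\leq 2q+1$, whereas the image of $(f_{2\Q})_\#$ is supported in degrees $\geq 2q+2$, so the images intersect trivially in each $\pi_i(X_\Q)$.

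I would then feed the pair $(f_1,f_2)$ into \thmref{thm: rational cat Y_1 Y_2}, using $\cat(Y_{1\Q}) = 2$ (the rational category of a product of two odd spheres) and $\cat(X_\Q) = 2$, to obtain
\[
2 + \cat(X^{[2q+1]}_\Q) + 2(n-2) \;\leq\; \TC_n(X_\Q) = 2n-1,
\]
which rearranges to $\cat(X^{[2q+1]}_\Q) \leq 1$. Finally, since $X$ is hyperbolic, the connective cover $X^{[2q+1]}$ has infinitely many non-zero rational homotopy groups and is therefore not rationally contractible, so $\cat(X^{[2q+1]}_\Q) \geq 1$. Combining gives $\cat(X^{[2q+1]}_\Q) = 1$, which is precisely the statement that $X^{[2q+1]}_\Q$ is a rational co-H-space.

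There is no genuine obstacle here: everything follows by the template already set up in \thmref{thm: cat cover goes down}(2) and \corref{cor: TC 3 A-F}. The only point requiring a moment's care is the complementarity-of-images condition in \thmref{thm: rational cat Y_1 Y_2}, but this is immediate from the degree-split between $f_1$ and the connective cover $f_2$.
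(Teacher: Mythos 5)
Your proof is correct and is exactly the paper's argument: the paper's own proof of this corollary simply cites the argument of part (2) of \thmref{thm: cat cover goes down}, which is the argument you have written out in full (with the correct bookkeeping giving $2 + \cat(X^{[2q+1]}_\Q) + 2(n-2) \leq 2n-1$, hence $\cat(X^{[2q+1]}_\Q) \leq 1$).
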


\begin{proof}
This  follows from the same argument as was used to show part (2) of \thmref{thm: cat cover goes down}. 
\end{proof}

It is irresistible to imagine somehow using \thmref{thm: cat cover goes down} inductively, so as to address \queref{que: co-H cover} for spaces that satisfy $\TC_n(X_\Q) =  (n-1)\cat(X_\Q)$.  Unfortunately, we are not able to do so at present.
Another question prompted by our results here is the following.   In principle, the hypothesis that $\TC_n(X_\Q) =  (n-1)\cat(X_\Q)$  gives a different hypothesis on $X$ for each  $n \geq 2$.  However, there is some evidence to suggest they are not separate.  

\begin{question}
Does $\TC_n(X) =  \cat(X^{n-1})$ for some $n\geq 2$ imply that $\TC_n(X) =  \cat(X^{n-1})$ for all $n \geq 2$?
\end{question}

\providecommand{\bysame}{\leavevmode\hbox to3em{\hrulefill}\thinspace}
\providecommand{\MR}{\relax\ifhmode\unskip\space\fi MR }
\providecommand{\MRhref}[2]{%
  \href{http://www.ams.org/mathscinet-getitem?mr=#1}{#2}
}
\providecommand{\href}[2]{#2}


\end{document}